\newcommand{\pathtofigs}{figures/pdf}
\newcommand{\pathtofigs}{figures/eps}
\newcommand{\arxiv}[1]{\href{http://arxiv.org/abs/#1}{\tt arXiv:\nolinkurl{#1}}}
\newtheorem{thm}[subsection]{Theorem}
\newtheorem{defn}[subsection]{Definition}
\newtheorem{prop}[subsection]{Proposition}
\newtheorem{lemma}[subsection]{Lemma}
\newtheorem{cor}[subsection]{Corollary}
\newenvironment{remark}{\noindent\textsl{Remark.}}{}
\newcommand{\Z}{\mathbb Z}
\newcommand{\CKT}{R3/CKT/}
\newcommand{\psmallmatrix}[1]{\left(\begin{smallmatrix} #1 \end{smallmatrix}\right)}
\newcommand{\mathfig}[2]{{\hspace{-3pt}\begin{array}{c}%
  \raisebox{-2.5pt}{\includegraphics[width=#1\textwidth]{\pathtofigs/#2}}%
    \end{array}\hspace{-3pt}}}
\newcommand{\reflectmathfig}[2]{{\hspace{-3pt}\begin{array}{c}%
  \raisebox{-2.5pt}{\reflectbox{\includegraphics[width=#1\textwidth]{\pathtofigs/#2}}}%
    \end{array}\hspace{-3pt}}}
\newcommand{\rotatemathfig}[3]{{\hspace{-3pt}\begin{array}{c}%
  \raisebox{-2.5pt}{\rotatebox{#2}{\includegraphics[height=#1\textwidth]{\pathtofigs/#3}}}%
    \end{array}\hspace{-3pt}}}
\def\clap#1{\hbox to 0pt{\hss#1\hss}}
\def\mathllap{\mathpalette\mathllapinternal}
\def\mathrlap{\mathpalette\mathrlapinternal}
\def\mathllapinternal#1#2{%
\llap{$\mathsurround=0pt#1{#2}$}}
\def\mathrlapinternal#1#2{%
\rlap{$\mathsurround=0pt#1{#2}$}}
\def\llbracket{\left[\!\!\left[}
\def\rrbracket{\right]\!\!\right]}
\newcommand{\s}[1]{s_{\text{{\tiny #1}}}}
\renewcommand{\sh}[2]{s_{\text{{\tiny #1}},#2}}     
\newcommand{\T}[1]{t_{\text{{\tiny #1}}}}
\newcommand{\Th}[2]{t_{\text{{\tiny #1}},#2}}
\newcommand{\h}[1]{h_{\text{{\tiny #1}}}}
\newcommand{\low}[2]{\overline{R3_{#1}} \! \!  \ ^{#2}}
\newcommand{\cw}{\circlearrowright}
\newcommand{\ccw}{\circlearrowleft}
\newcommand{\OO}{{\mathcal{O} \To \mathcal{O}}}
\newcommand{\OP}{{\mathcal{O} \To \mathcal{P}}}
\newcommand{\PO}{{\mathcal{P} \To \mathcal{O}}}
\newcommand{\PP}{{\mathcal{P} \To \mathcal{P}}}
\newcommand{\Hom}[3]{\operatorname{Hom}_{#1}\left(#2,#3\right)}
\newcommand{\Ob}[1]{\operatorname{Ob}(#1)}
\newcommand{\Cob}[1]{{\mathcal Cob}\left(\su{#1}\right)}
\newcommand{\Kob}[1]{{\mathbf{Kob}}\left(\su{#1}\right)}
\newcommand{\Kom}[1]{\operatorname{Kom}\left(#1\right)}
\newcommand{\Mat}[1]{\operatorname{Mat}\left(#1\right)}
\newcommand{\su}[1]{\mathfrak{su}_{#1}}
\newcommand{\Ortang}{\mathbf{Ortang}}
\newcommand{\Kh}[1]{\llbracket#1\rrbracket}
\newcommand{\cone}[3]{C\left(#1 \xrightarrow{#2} #3\right)}
\newcommand{\vcone}[3]{C\left(
    \def\objectstyle{\scriptstyle}
    \def\labelstyle{\scriptstyle}
    \vcenter{\xymatrix{#1 \ar[d]^{#2} \\ #3}}
    \right)}
\newcommand{\shift}[1]{\left[#1\right]}
\newcommand{\htpy}{\simeq}
\newcommand{\iso}{\cong}
\newcommand{\To}{\rightarrow}
\newcommand{\compose}{\circ}
\newcommand{\directSum}{\oplus}
\newcommand{\DirectSum}{\bigoplus}
\newcommand{\tensor}{\otimes}
\newcommand{\Id}{\boldsymbol{1}}
\newcommand{\IsoTo}{\overset{\iso}{\To}}
\newcommand{\directSumStack}[2]{{\begin{matrix}#1 \\ \DirectSum \\#2\end{matrix}}}
\newcommand{\directSumStackThree}[3]{{\begin{matrix}#1 \\ \DirectSum \\#2 \\ \DirectSum \\#3\end{matrix}}}
\title{Functoriality for the $\su{3}$ Khovanov homology}
\author{David Clark}
\address{
   Department of Mathematics\\
   University of California, San Diego\\
   La Jolla, CA 92093-0112
}
\email{dclark@math.ucsd.edu}
\urladdr{http://www.math.ucsd.edu/\~{}dclark}
\date{                                 
  First edition: \today.
  This edition: \today.
}
\keywords{
  Khovanov homology,
  categorification,
  link cobordism,
  spider,
  quantum knot invariants.
}
\begin{document}

\begin{abstract}
We prove that Morrison and Nieh's categorification of the $\su{3}$ quantum knot invariant \cite{MorrN:sl(3)} is functorial with respect to tangle cobordisms. This is in contrast to the categorified $\su{2}$ theory \cite{Khov00_MR1740682,BN05_MR2174270}, which was not functorial as originally defined \cite{Jac04_MR2113903,ClarMW:Fix}.

We use methods of Bar-Natan \cite{BN07_MR2320156} to construct explicit chain maps for each variation of the third Reidemeister move. Then, to show functoriality, we modify arguments used by Clark, Morrison, and Walker \cite{ClarMW:Fix} to show that induced chain maps are invariant under Carter and Saito's movie moves \cite{CS93_MR1238875,CS97_MR1445361}.

\end{abstract}

\maketitle                          

\section{Introduction}                  
\label{sec:introduction}

\subsection{The $\su{3}$ link invariant and its categorification}
\label{ssec:intro-categorification}

Khovanov first categorified the $\su{3}$ link invariant in \cite{Khov04_MR2100691}; it was later generalized by MacKaay and Vaz in \cite{MacV07_MR2336253}. Independently, in \cite{MorrN:sl(3)}, Morrison and Nieh give a local geometric construction in the spirit of Bar-Natan \cite{BN05_MR2174270}, using the language of planar algebras and canopoleis\footnote{We find this to be a pleasing plural form of \emph{canopolis}, and surely the purest from the standpoint of Greek etymology (cf. \emph{metropolis}, \emph{metropoleis} \cite{wikt:metropolis}). By way of analogy,\linebreak formulae\,:\,formulas\,:\,:\,canopoleis\,:\,canopolises.}.
Indeed, the $\su{3}$ quantum link invariant can be thought of as a map of planar algebras, defined on generators by
\begin{align*}
 \mathfig{0.06}{webs/positive_crossing}&  \mapsto  \phantom{- q^{-3}} \mathllap{q^2}  \mathfig{0.06}{webs/two_strand_identity} - \phantom{q^{-2}} \mathllap{ q^3} \mathfig{0.06}{webs/upwards_H_diagram} \\
 \mathfig{0.06}{webs/negative_crossing}&  \mapsto - q^{-3} \mathfig{0.06}{webs/upwards_H_diagram} + q^{-2} \mathfig{0.06}{webs/two_strand_identity}
\end{align*}
and subject to the relations of Kuperberg's $\su{3}$ spider \cite{Kup96_MR1403861}
\begin{align}
\label{eq:spider}
   \mathfig{0.045}{webs/clockwise_circle} & = q^2 + 1 + q^{-2}\\
   \mathfig{0.045}{webs/bubble} & = q\;\mathfig{0.009}{webs/tall_strand} + q^{-1}\;\mathfig{0.009}{webs/tall_strand} \\
   \mathfig{0.08}{webs/oriented_square}
   & = \mathfig{0.08}{webs/two_strands_horizontal} + \mathfig{0.08}{webs/two_strands_vertical}
\end{align}
which will reduce a $\Z[q,q^{-1}]$-linear combination of trivalent graphs (``webs") to a polynomial.

Morrison and Nieh use a technique similar to Bar-Natan's \cite{BN05_MR2174270} to categorify this map of planar algebras. The new source category (technically a \emph{canopolis}) $\Ortang$ is that of oriented tangles and their cobordisms, and the target category $\Kob{3}$ consists of formal complexes of webs with chain maps given by seamed cobordisms (``foams"). In \cite{MorrN:sl(3)}, it is shown that this categorified map, which we'll call $Kh(\su{3})$ (technically a \emph{canopolis morphism}), is well-defined on objects, i.e., isotopy of a tangle does not change the homotopy type of the image complex. Put yet another way, ``$Kh(\su{3})$ is a link invariant."

\subsection{Main result}
\label{ssec:main-result}

What's not shown in \cite{MorrN:sl(3)} is whether $Kh(\su{3})$ is truly functorial, i.e., that it is also well-defined on morphisms (up-to-isotopy tangle cobordisms), and thus an honest map of canopoleis. Conveniently, we can view a tangle cobordism in 4-space as a sequence of tangle diagrams called a ``movie".\footnote{There is some subtlety here about being able to assume that such cobordisms are in general position; this is addressed carefully in \cite{ClarMW:Fix}.} Further, any cobordism admits a movie presentation such that the tangles in subsequent frames differ by either a single Reidemeister move or a single Morse move (the birth or death of a circle, or the splicing of two strands). This partitioning of a cobordism $C$ into simple combinatorial steps gives us an obvious way to attempt a definition of a chain map $Kh(\su{3})(C)$.

Thanks to Carter and Saito \cite{CS97_MR1445361, CS93_MR1238875} (and also Roseman \cite{Rose98_MR1634466}), there is also a way to view isotopies of tangle cobordisms in this movie presentation context: two tangle cobordisms are isotopic if and only if they are related by a sequence of the movie moves\footnote{In an oriented theory, like the one in this paper, one must consider all possible orientations of these moves, in addition to the usual variations resulting from reflections and crossing changes.} in Figure \ref{fig:movie-moves}. Thus $Kh(\su{3})$ is only well-defined if it yields homotopic chain maps when applied to the cobordism on each side of every movie move.

\begin{figure}[ht]
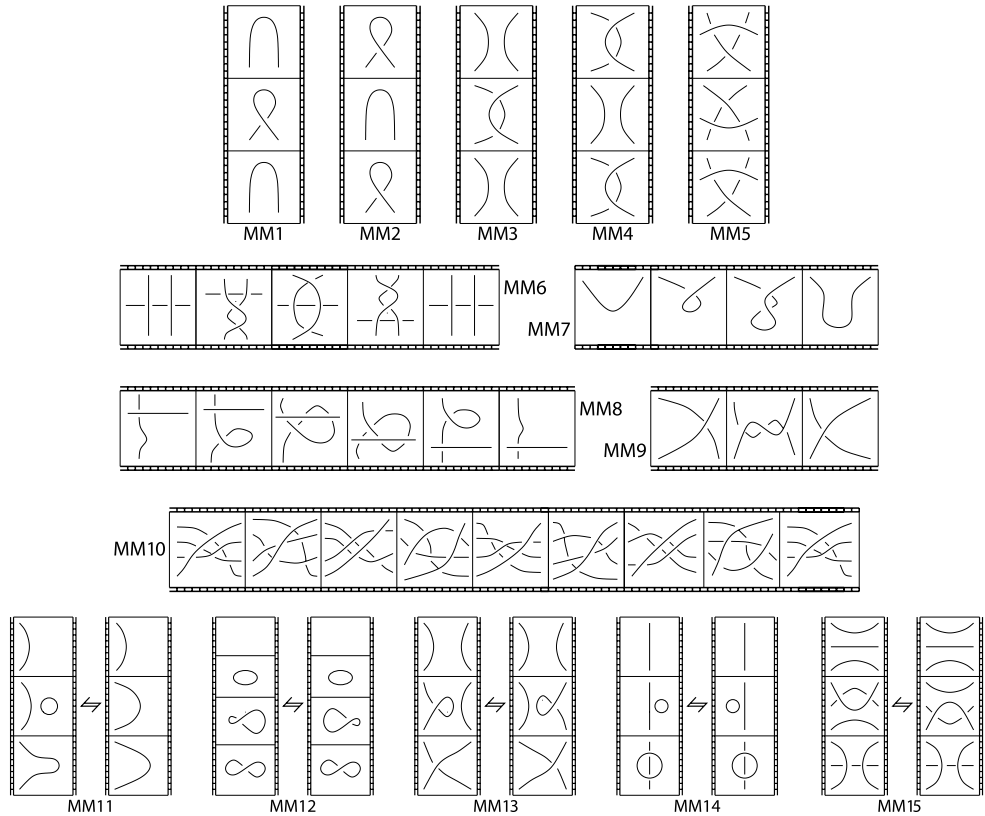

$$\mathfig{0.55}{movie_moves/MM1-5}$$
$$\mathfig{0.77}{movie_moves/MM6-10}$$
$$\mathfig{0.98}{movie_moves/MM11-15}$$
\caption[Carter and Saito's unoriented movie moves]{Carter and Saito's unoriented movie moves, numbered according to Bar-Natan \cite{BN05_MR2174270}. Note that first ten moves are circular, and so should be paired with the constant movie of the first frame.}
\label{fig:movie-moves}
\end{figure}

This turned out not to be the case in for the categorified $\su{2}$ invariant \cite{Khov00_MR1740682,BN05_MR2174270}, as first documented by Jacobsson \cite{Jac04_MR2113903}: certain movie moves changed the sign of the induced chain map. This issue was resolved in \cite{ClarMW:Fix} with a modified construction designed to incorporate a previously neglected piece of representation theory: the fact that the fundamental representation of $\su{2}$ is antisymmetrically self-dual, and the source of the sign anomaly.

Such an issue does not exist for $\su{3}$, which is not self-dual at all. This, along with some experimental evidence, led Morrison and Nieh to conjecture that their theory is in fact functorial. In this paper, we'll prove it as a theorem.

\begin{thm}
\label{thm:1}
$Kh(\su{3}): \Ortang \rightarrow \Kob{3}$ is a canopolis morphism; in particular, oriented tangle cobordisms induce well-defined (up to homotopy) chain maps in $\Kob{3}$.
\end{thm}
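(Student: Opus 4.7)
The plan is to follow the two-step strategy advertised in the abstract: first I would fix, once and for all, explicit chain maps in $\Kob{3}$ for each of the elementary movie generators (the three Reidemeister moves in every orientation variant, plus the Morse moves), and then I would verify directly that for every movie move in Figure \ref{fig:movie-moves}, the two induced chain maps are chain homotopic. Since $Kh(\su{3})$ is already known from \cite{MorrN:sl(3)} to be well-defined on objects, functoriality is precisely this homotopy-invariance check under the fifteen moves of Carter--Saito, in each of their orientation and reflection variants.

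For the elementary chain maps, $R1$ and $R2$ are given by the delooping/Gaussian-elimination homotopy equivalences already present in Morrison--Nieh's invariance proof, and the Morse-move maps are the obvious birth/death/saddle foams. The non-trivial piece is $R3$: here I would adopt Bar-Natan's method from \cite{BN07_MR2320156}, in which one treats an $R3$ tangle as a single crossing sitting above a strand (or pair of strands) and expands that crossing into the formal complex given by the $\su{3}$ bracket. In each resolution one finds a configuration to which $R2$ already applies, so the $R3$ chain map is forced on us as the composition of $R2$-maps in each resolution, assembled by naturality of the cone construction. This has the virtue of producing, uniformly and canonically, a map for every orientation and crossing-sign variant of $R3$, which is essential since $\su{3}$ is oriented and there are many such variants.

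With the elementary maps in hand, I would check the movie moves by the strategy of \cite{ClarMW:Fix}: for each movie move, compute both induced chain maps (on the webs in the initial frame) and compare. The key technical input is that, after computing the relevant complexes of webs in $\Kob{3}$ using the spider relations (1)--(3), the space of chain maps up to homotopy between the source and target complexes is often forced to be one-dimensional in the appropriate homological and quantum grading, so that it suffices to verify agreement on a single generator, and in particular to exclude a sign ambiguity. The local moves MM1--MM5 reduce to routine calculations in small tangle regions; the mixed moves MM6--MM10 require combining the $R3$ map built above with $R2$ or a Morse map, but each again ends in a small-dimensional hom-space computation; and MM11--MM15, which involve a Morse move commuting past a Reidemeister move, are controlled by checking that the saddle foam commutes (up to homotopy) with the categorical resolution used to define $R3$.

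I expect the main obstacle to be bookkeeping rather than conceptual: because $\Ortang$ is an oriented theory, every $R3$ variant must be analyzed separately, and the movie moves that involve $R3$ (notably MM6, MM8, and MM10) fan out into many orientation sub-cases. The saving grace, relative to the $\su{2}$ story of \cite{Jac04_MR2113903,ClarMW:Fix}, is that the fundamental representation of $\su{3}$ is not self-dual, so the antisymmetric sign anomaly that broke functoriality for $\su{2}$ has no analogue here; correspondingly, the one-dimensionality arguments for morphism spaces in $\Kob{3}$ should pin each map down uniquely without the additional duality-based correction that was needed in the $\su{2}$ case. The remaining work is to organize the many variants efficiently, and for this I would lean on the canopolis/planar-algebra formalism so that each movie move need only be checked in its minimal local tangle, with the canopolis axioms propagating the result to all global instances.
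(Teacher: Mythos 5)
Your overall plan is the one the paper executes (explicit maps for the Reidemeister and Morse generators, then movie-move checks reduced, where possible, to evaluation on a single object via one-dimensionality of hom-spaces), but two of your steps, as stated, would not go through. First, in the $\su{3}$ theory the Kauffman-trick layers of an $R3$ complex are \emph{not} simplified by $R2$: resolving the distinguished crossing produces, in the I-resolved layer, a strand passing a trivalent vertex, and what is needed there is the family of ``strand-past-vertex'' homotopy equivalences (eight oriented variants, obtained by desquaring, delooping and Gaussian elimination as in \cite{MorrN:sl(3)}), fed into the cone-reducing lemmas; $R2$ only enters in the four variations ($hlm$, $mlh$ and the two starlike moves) where the smooth layers of the before/after cones differ, and even there a crossing-reordering isomorphism with its sign must be inserted for the cones to match. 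Without these strand-past-vertex equivalences, their inverses, and the crossing-ordering conventions, you cannot write down the $R3$ chain maps at all, so this is a substantive missing ingredient rather than bookkeeping.

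Second, the reduction to ``check on one generator'' needs two repairs. The one-dimensionality statement comes from the duality isomorphism of \cite{ClarMW:Fix}, whose proof already assumes invariance under MM9; so MM9 must be verified by a direct computation on every object of the initial complex, independently of that machinery, or the argument is circular. Moreover, agreement of two chain maps on a single object only pins down the homotopy class if that object is \emph{homotopically isolated}, i.e.\ no grading-zero homotopy correction $dh+hd$ can be nonzero on it; this holds for MM6--8, 11, 13, 15 because the relevant webs are acyclic, but MM12 and MM14 admit no such object and must be computed on all (few) objects. Two further points you would discover along the way: the antiparallel $R2$ equivalence cannot be taken verbatim from \cite{MorrN:sl(3)} --- it must be rescaled by $-1$ for the movie-move checks to close with scalar $+1$ --- and MM10, rather than being checked by brute force over its forty-eight variations, can be obtained for free from MM6 and MM8 via the 3-cell (non-generic projection) argument.
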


It would be a natural next step to extend this theory to \textbf{WebCob}, the category of knotted webs and seamed cobordisms in four-space. For objects, well-definition relies on two additional Reidemeister-type moves: sliding a strand past a vertex (R4), and flipping a vertex upside-down\footnote{This is equivalent to changing the cyclic ordering of the edges around the vertex}(R5). Unfortunately, the complexes associated to each side of these moves are not quite chain homotopy equivalent: there are extra grading shifts in the way. (Somewhat incidentally, we treat the R4 case fully in Section \ref{sec:R3maps}.) An extension to knotted webs will thus require a renormalized skein theory, and our webs will probably need to carry a framing. This makes morphisms more complicated, since it's not clear exactly what a ``framed" seamed cobordism should be, or what the corresponding movie move list might look like. We hope to address this in a future paper.

The author would like to thank Justin Roberts and Scott Morrison for many useful discussions. Also, an additional thanks to Scott Morrison and Ari Nieh for allowing me access to their wonderful foam diagrams, and to Scott Carter and Masahico Saito for letting me reuse their movie move diagrams from \cite{CS93_MR1238875}. 

In Section \ref{sec:the-su3-theory} we'll review the $\su{3}$ theory of Morrison and Nieh. Much of the work comes in Section \ref{sec:reidemeister_maps}, when we explicitly define the induced maps for oriented Reidemeister moves. Finally, in Section \ref{sec:checking_movie_moves} we'll look at the induced maps on each side of the movie moves, and see that in each case they are homotopic.        
\section{The $\su{3}$ theory}           
\label{sec:the-su3-theory}

\subsection{Planar algebras and canopoleis}
\label{ssec:planar-algebras}

We'll give a brief recap of the construction of Morrison and Nieh here, and refer the reader to \cite{BN05_MR2174270}, \cite{MorrN:sl(3)}, and \cite{Web07_MR2308960} for more technical details regarding planar algebras and canopoleis.

Recall that an \emph{oriented planar arc diagram} is, colloquially, just an oriented crossingless tangle in a disk with (possibly) some smaller disks removed, and with the remaining holes given some ordering. Two such diagrams can be composed whenever the outer boundary of one diagram matches one of the inner boundaries of the other: we just shrink the first diagram and paste it into the second, giving a new planar arc diagram. More generally, let $P$ be a planar arc diagram with $n$ holes; we'll label each from 1 up to $n$, and think of the outer boundary of $P$ as the ``$0$th hole." If $\mathcal{Q}_i$ is the set of planar arc diagrams that match the boundary of the $i$th hole of $P$, then $P$ defines an operation $P: \mathcal{Q}_1 \times ... \times \mathcal{Q}_n \rightarrow \mathcal{Q}_0$. See Figure \ref{fig:operad} for an example.

\begin{figure}[ht]
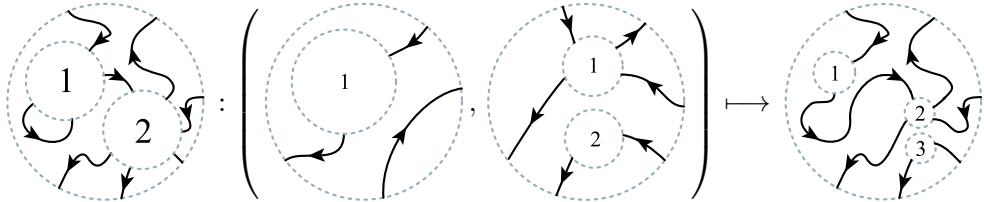

\begin{align*}
\mathfig{0.2}{webs/operad1}: \left( \mathfig{0.2}{webs/operad3}, \mathfig{0.2}{webs/operad2} \right) \longmapsto \mathfig{0.2}{webs/operad5}
\end{align*}
\caption[Composition in the oriented planar arc diagram operad]{Composition in the oriented planar arc diagram operad.}
\label{fig:operad}
\end{figure}

This operation on oriented planar arc diagrams gives them the structure of a \emph{colored operad}, where the coloring just refers to the labels (incoming and outgoing strands) on the disk boundaries. Such an operad can act on a collection of objects in some monoidal category $\mathcal{C}$: we associate to each color $s_i$ an object $\mathcal{P}(s_i)$, and to each collection of composable colors $s_1,...,s_n, s_0$ we associate the space of maps $\Hom{}{\mathcal{P}(s_1) \times ... \times \mathcal{P}(s_n)}{\mathcal{P}(s_0)}$. Of course, a properly colored planar arc diagram $P$ specifies a map $\mathcal{P}(s_1) \times ... \times \mathcal{P}(s_n) \rightarrow \mathcal{P}(s_0)$.

\begin{defn}
A \emph{planar algebra in} $\mathcal{C}$ is a collection $(\mathcal{P}(s_i)) \in \Ob{\mathcal{C}}$ that admits the above action of the operad of oriented planar arc diagrams.
\end{defn}

\begin{figure}[ht]
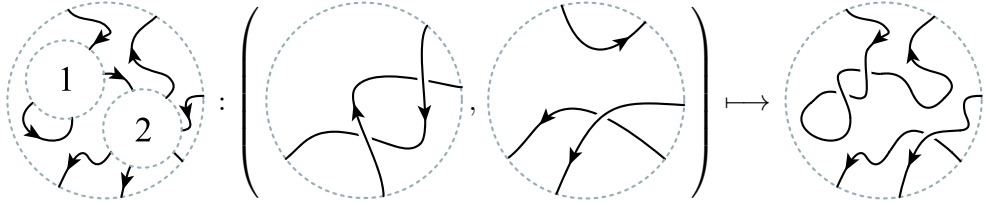

\begin{align*}
\mathfig{0.2}{webs/operad1} : \left( \mathfig{0.2}{webs/tangle_disk1}, \mathfig{0.2}{webs/tangle_disk2} \right) \longmapsto \mathfig{0.2}{webs/planar_tangle}
\end{align*}
\caption[A planar algebra]{A planar algebra: the operad action on oriented tangle diagrams.}
\label{fig:planar-tangle}
\end{figure}

Practically speaking, this structure gives us an associative way of ``multiplying" elements of our collection, in a planar fashion. As an easy example, the set of oriented tangle diagrams forms a planar algebra in the category of sets (see Figure \ref{fig:planar-tangle}), with generating set $\left\{ \mathfig{0.06}{webs/positive_disk},\mathfig{0.06}{webs/positive_disk}\right\}$; we shall, of course, consider these diagrams up to Reidemeister equivalence. Similarly, Kuperberg's $\su{3}$ spider forms a planar algebra in the category of $\Z[q,q^{-1}]$-modules, where we quotient by the $\su{3}$ spider relations (Equations \ref{eq:spider}); the spider is generated, as a planar algebra, by $\left\{ \mathfig{0.06}{webs/sink_disk},\mathfig{0.06}{webs/source_disk}\right\}$.   We can thus view the $\su{3}$ quantum link invariant as a map of planar algebras, which is convenient for both computational efficiency and organizing philosophy.

The goal, then, is to categorify this local picture of a quantum invariant: to do this, we invoke the notion of a canopolis, first appearing in \cite{BN05_MR2174270}.

\begin{defn}
A \emph{canopolis} is a planar algebra in some (monoidal\footnote{Here the monoidal structure is just given by cartesian product.}) category of categories $(\mathcal{C}(s_i))$. In particular, both the collection of objects and the collection of morphisms form planar algebras.
\end{defn}

Now a planar arc diagram $P$ will define a \emph{functor} $P:\mathcal{C}(s_1) \times ... \times \mathcal{C}(s_n) \rightarrow \mathcal{C}(s_0)$. We can view each category $\mathcal{C}(s)$ as a ``can" (rather than just a disc) with a specified label $s$ that can be plugged into a cylinder with a matching label in $P \times [0,1]$: objects will live on the tops and bottoms of cans, and morphisms will live inside cans. Further, the fact that $P$ defines a functor guarantees that planar operations commute with the usual composition of morphisms within their categories. Thus, we can build a ``city of cans" by composing vertically or horizontally in any order. It will also be useful to talk about maps between canopoleis.

\begin{defn}
A \emph{canopolis morphism} $\mathbf{C} \rightarrow \mathbf{C'}$ is a collection of functors $$(\mathcal{C}(s_i)) \rightarrow (\mathcal{C}'(s_i))$$ that commute with all planar algebra operations.
\end{defn}

Our first example of a canopolis will be the categorification of the set of oriented tangles. Let $\Ortang(s)$ be the category of tangle cobordisms with fixed boundary denoted by $s \in \mathcal{S}$, where $\mathcal{S}$ indexes the set of strand intersections with the boundary circle and their orientations, up to cyclic permutation. Then we define $\Ortang$ to be the canopolis in the category $\bigcup_{s \in \mathcal{S}} \Ortang(s)$.
(Note that we need more than just the cobordisms between two individual crossings to generate all possible tangle cobordisms.)
Here we want all morphisms considered up to four-dimensional isotopy; when viewing a generic morphism as a movie of tangle diagrams, this means we mod out by the movie moves.

\subsection{Categorifying the $\su{3}$ spider}
\label{ssec:categorifying}

A more interesting example, because it involves relations, is the categorification of the $\su{3}$ spider. Let $\Cob{3}_s$ be the category of cans with fixed boundary as above: the objects are $\su{3}$ webs and the morphisms are seamed cobordisms (or \emph{foams}), which are just CW-complexes modeled on $``Y" \times [0,1]$, plus some additional data.

\begin{defn}[from \cite{MorrN:sl(3)}]
Given two webs $D_1$ and $D_2$ drawn in a disc, both with boundary
$\partial$, a \emph{seamed cobordism} from $D_1$ to $D_2$ is a
2-dimensional CW-complex $F$ with
\begin{itemize}
\item exactly three oriented $2$-cells meeting along each oriented singular $1$-cell, such that the orientations on the $2$-cells all induce the same orientation on the seam;
\item a cyclic ordering on those three $2$-cells;
\item and an identification of the boundary of $F$ with $D_1 \cup D_2
\cup (\partial \times [0,1])$ such that
\begin{itemize}
\item the orientations on the sheets induce the orientations on the
edges of $D_1$, and the opposite orientations on the edges of $D_2$,
\item and the cyclic orderings around the singular seams agree with the
cyclic orderings around a vertex in $D_1$ or $D_2$ given by its
embedding in the disc; the anticlockwise ordering for ``inwards" vertices, the clockwise ordering for ``outwards" vertices.
\end{itemize}
\end{itemize}
Note that a foam is an abstract space; while its boundary is identified with lines on the surface of the can $D^2 \times [0,1]$ (and thus it is picturesque to view foams as in Figure \ref{fig:canopolis-action}), the foam doesn't literally live in the can.
\end{defn}

\begin{figure}[ht]
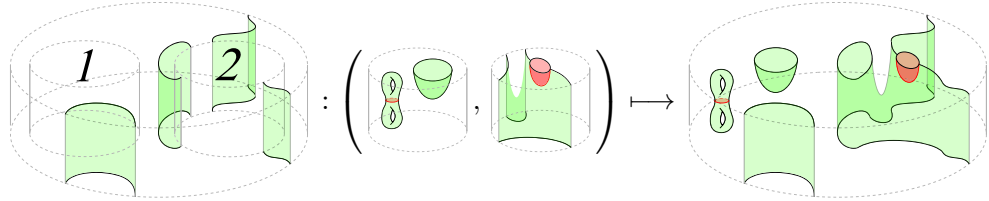

\begin{align*}
\mathfig{0.3}{foams/canopolis1b} : \left( \mathfig{0.1}{foams/canopolis2b}, \mathfig{0.1}{foams/canopolis3b} \right) \longmapsto \mathfig{0.3}{foams/canopolis4b}
\end{align*}
\caption[Planar composition of foams in $\Cob{3}$.]{Planar composition of foams in $\Cob{3}$. It is convenient to view these foams in cans, though really they are not embedded there.}
\label{fig:canopolis-action}
\end{figure}

We define $\Cob{3}$ to be the canopolis in the category $\bigcup_{s \in \mathcal{S}} \Cob{3}_s$ (see Figure \ref{fig:canopolis-action}), where, for $R$ a ring in which 2 and 3 are invertible\footnote{In this paper, we'll assume $R=\Z[\frac{1}{2}, \frac{1}{3}]$.}, we allow formal $R$-linear combinations of morphisms, and where we impose the following local relations on foams:

\begin{itemize}
\item``Closed foam" relations:
\begin{align}
\label{eq:closed_foams}%
\mathfig{0.075}{cobordisms/sphere} & = 0 &
\mathfig{0.075}{cobordisms/torus} & = 3 \\
\notag
\mathfig{0.15}{cobordisms/double_torus} & = 0 &
\mathfig{0.16}{cobordisms/triple_torus_disc} & = 0
\end{align}

\item The ``neck cutting" relation:
\begin{equation}
\label{eq:neck_cutting}
 \rotatemathfig{0.04}{90}{cobordisms/cylinder} = \frac{1}{3}   \rotatemathfig{0.04}{90}{cobordisms/neck_cutting_left}
      - \frac{1}{9} \rotatemathfig{0.04}{90}{cobordisms/neck_cutting_middle}
      + \frac{1}{3} \rotatemathfig{0.04}{90}{cobordisms/neck_cutting_right}
\end{equation}

\item The ``airlock" relation:
\begin{equation}
\label{eq:airlock}
    \rotatemathfig{0.05}{90}{cobordisms/airlock} = -
    \rotatemathfig{0.05}{90}{cobordisms/cup_cap}
\end{equation}

\item The ``tube" relation
\begin{equation}
\label{eq:tube_relation}
    \mathfig{0.1}{cobordisms/tube_relation/tube} =
    \frac{1}{2}
    \mathfig{0.1}{cobordisms/tube_relation/two_bubbles_lower_kiss} +
    \frac{1}{2}
    \mathfig{0.1}{cobordisms/tube_relation/two_bubbles_upper_kiss}
\end{equation}

\item The ``three rocket" relation:
\begin{equation}
\label{eq:rocket_relation}
  \mathfig{0.1}{cobordisms/rocket_relation/rocket_z}
 + \mathfig{0.1}{cobordisms/rocket_relation/rocket_x}
 + \mathfig{0.1}{cobordisms/rocket_relation/rocket_y} = 0
\end{equation}

\item The ``seam-swap" relation: reversing the cyclic order of the three 2-cells
attached to a closed singular seam is equivalent to multiplication by $-1$.

\item The ``sheet relations" (which can be derived from the relations above):
\begin{align}
  \mathfig{0.05}{cobordisms/sheet_algebra/blister} & = 0 &
 \label{eq:choking-torus-multiplication}%
   \mathfig{0.065}{cobordisms/sheet_algebra/two_handle_discs} & =
   -3 \mathfig{0.06}{cobordisms/sheet_algebra/handle} \\
 \label{eq:torus-choking-torus}%
 \mathfig{0.065}{cobordisms/sheet_algebra/handle_handle_disc} & = 0 &
  \mathfig{0.065}{cobordisms/sheet_algebra/two_handles} & = 0
\end{align}
The first of these four is the extremely useful ``blister relation."
\end{itemize}

\begin{remark}
\begin{enumerate}
    \item $\Cob{3}$ is generated, as a canopolis, by the cup, cap, saddle, zip, and unzip morphisms below.
        \begin{equation*}
            \mathfig{0.08}{foams/birth} \quad
            \mathfig{0.08}{foams/death} \quad
            \mathfig{0.08}{cobordisms/saddle2} \quad
            \mathfig{0.08}{cobordisms/zip} \quad \mathfig{0.08}{cobordisms/unzip}
        \end{equation*}
    \item As a consequence of the local relations, all closed foams in $\Cob{3}$ can be evaluated to scalars. (Lemma 3.3 in \cite{MorrN:sl(3)})
\end{enumerate}
\end{remark}

As it turns out, $\Cob{3}$ will benefit from slightly more structure. First we'll make it into a graded canopolis by endowing web diagrams with formal grading shifts given by powers of $q$. Further, define the grading of a morphism $C$ from $q^{m_1}D_1$ to $q^{m_2}D_2$ by
\begin{align}
\label{eq:grading}
\deg C = 2\chi(C) - B + \frac{V}{2} + m_2 - m_1
\end{align}
where $B$ is the number of boundary points on $D_i$ and $V$ is the total number of trivalent vertices in the webs $D_1$ and $D_2$. Note that the local relations above are degree homogeneous, and that degree is additive under canopolis composition.

Second, we form $\Mat{\Cob{3}}$ by introducing formal direct sums of objects, and allowing matrices of morphisms between these direct sums. Morrison and Nieh prove that the graded decategorification of $\Mat{\Cob{3}}$  is, in fact, Kuperberg's $\su{3}$ spider.

Finally, for the \emph{coup de grace}, we arrive at the canopolis $\Kom{\Mat{\Cob{3}}}$ by considering chain complexes (up to chain homotopy equivalence) with objects and morphisms in $\Mat{\Cob{3}}$. We'll have to be slightly more explicit about the action of planar arc diagrams now that cans will be associated with complexes and chain maps, rather than just objects and morphisms in $\Cob{3}$. However, the rule is simple: apply the usual construction for tensor product of complexes, but use the planar arc diagram to ``multiply" objects and morphisms instead of $\otimes$. (See Appendix \ref{ssec:planar-comps} for details.) For convenience, let's make the abbreviation $\Kob{3} := \Kom{\Mat{\Cob{3}}}$.


\subsection{A link homology}
\label{ssec:link-homology}

Having defined the relevant canopoleis, Morrison and Nieh proceed to construct a link homology that categorifies the $\su{3}$ quantum link invariant, i.e., a map $Kh(\su{3}): \Ob{\Ortang} \rightarrow \Ob{\Kob{3}}$. Such a map is easily defined on objects in $\Ortang$ by the following categorified skein relations:
\begin{equation*}
\xymatrix@R-1mm{
 \mathfig{0.06}{webs/positive_crossing} \ar@{|->}[r] & \quad \; & \Bigg( \bullet \ar[r]
    & q^2\phantom{-} \mathfig{0.06}{webs/two_strand_identity} \ar[r]^{\mathfig{0.06}{cobordisms/zip}} & q^3 \mathfig{0.06}{webs/upwards_H_diagram} \ar[r] & \bullet \Bigg) \\
 \mathfig{0.06}{webs/negative_crossing} \ar@{|->}[r] & \Bigg( \bullet \ar[r] &
    q^{-3} \mathfig{0.06}{webs/upwards_H_diagram} \ar[r]^{\mathfig{0.06}{cobordisms/unzip}} & q^{-2} \mathfig{0.06}{webs/two_strand_identity} \ar[r] & \bullet \Bigg) &
}
\end{equation*}
The homological heights here are $-2$, $-1$, $0$, $1$, and $2$; the webs with $q^{\pm2}$ shifts lie at height 0 in each case. (Let's also establish the following nomenclature for the webs in this picture: we'll call the ones with $q^{\pm2}$ shifts the \emph{smoothly-resolved} webs for these crossings, and the ones with $q^{\pm3}$ shifts the \emph{I-resolved} webs for these crossings.)

These crossings will compose under planar operations to make larger tangles, as will the associated complexes. One important subtlety is that planar composition of complexes is independent of the order of composition, up to chain isomorphism, but this isomorphism is not the obvious permutation: one needs to sprinkle some minus signs into the permutation to make it a chain map. The upshot is that (1) for well-definition of complexes, all crossings in a tangle diagram must be equipped with an ordering (of course, this is equivalent to the ordering of holes in a planar arc diagram), and (2) there are (slightly) nontrivial chain maps that will reorder the crossings. See Appendix \ref{ssec:sign-conventions} for details.

To complete our map on objects, we need only check that the map $Kh({\su{3}})$ is invariant under isotopy of tangles, i.e., that Reidemeister moves applied to the source tangle do not change the homotopy type of the resulting complex. This is essentially done in \cite{MorrN:sl(3)} by constructing a chain homotopy equivalence for each version of the oriented Reidemeister moves, though we will provide additional details in Section \ref{sec:reidemeister_maps}.

\subsection{A canopolis morphism?}

There is a natural way to define $Kh(\su{3})$ on morphisms (tangle cobordisms) as well. Since a surface in 4-space can be presented by a movie, we can view a cobordism as a sequence of tangles diagrams that, at each stage, differ by a Reidemeister or Morse move. Thus we need only define chain maps for these six generating moves. This is easy: Morse moves induce the obvious gluing of \mbox{0-,} \mbox{1-,} or 2-handles into a foam, and Reidmeister moves already have chain maps defined for the link homology. Again, the heart of the issue here is whether these induced maps are well-defined, i.e., invariant under the movie moves. If they are, then $Kh(\su{3})$ is a canopolis morphism.

\section{Reidemeister maps}             
\label{sec:reidemeister_maps}

\subsection{The Reidemeister one and two maps}

We'll take these (more or less) directly from their definition in \cite{MorrN:sl(3)}, where they are derived and proven to be homotopy equivalences. The Reidemeister one maps are shown in Figures \ref{fig:r1a} and \ref{fig:r1b} for the positive (R1a) and negative (R1b) twist, respectively.

\begin{figure}[h]
$$\mathfig{0.5}{R1+2/R1a/R1a_map}$$
\caption[A homotopy equivalence for $R1$a: the positive twist]{A homotopy equivalence for $R1$a: the positive twist.}\label{fig:r1a}
\end{figure}

\begin{figure}[h]
$$\mathfig{0.5}{R1+2/R1b/R1b_map}$$
\caption[A homotopy equivalence for $R1$b: the negative twist]{A homotopy equivalence for $R1$b: the negative twist.}\label{fig:r1b}
\end{figure}

The Reidemeister two maps come in two flavors, parallel or antiparallel, and the maps are given in Figures \ref{fig:r2a} and \ref{fig:r2b}. Note that changing which strand moves over top does not change our maps, except that we will always use the following ordering convention: the negative crossing is 1, and the positive crossing is 2.

\begin{figure}[h]
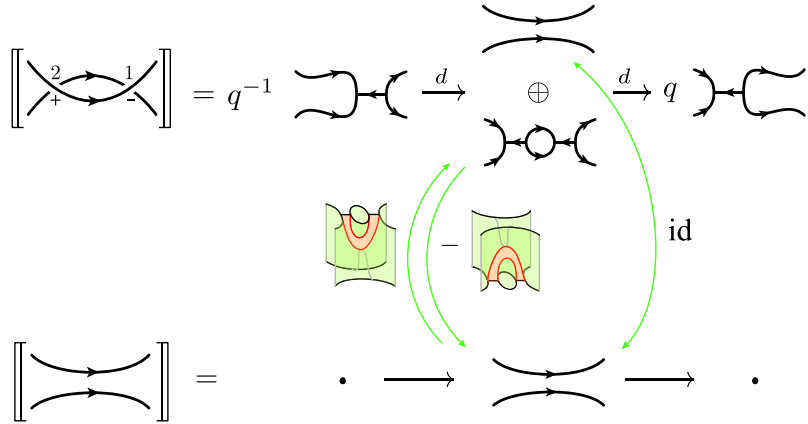

$$\mathfig{0.8}{R1+2/R2a/R2a_map}$$
\caption[A homotopy equivalence for $R2$a: parallel strands]{A homotopy equivalence for $R2$a: parallel strands.}\label{fig:r2a}
\end{figure}

\begin{figure}[h]
$$\mathfig{0.8}{R1+2/R2b/R2b_map}$$
\caption[A homotopy equivalence for $R2$b: antiparallel strands]{A homotopy equivalence for $R2$b: antiparallel strands.}\label{fig:r2b}
\end{figure}

It's also worth noting that our antiparallel map (Figure \ref{fig:r2b}) is $-1$ times the original map in \cite{MorrN:sl(3)}; we're free to multiply any of these maps by a scalar, and some brief experimentation confirms that this particular scalar, in this particular place, is needed for functoriality.

\subsection{The Reidemeister three maps}
\label{sec:R3maps}

Here we have some work to do: in order to compute movie move maps, we'll need to know the $R3$ maps explicitly, for every flavor of the move. The ``categorified Kauffman trick" (CKT) (first used by Bar-Natan in \cite{BN05_MR2174270} and then by Morrison and Nieh in \cite{MorrN:sl(3)}) provides an efficient method for computing the $R3$ maps.

There are eight different versions of the oriented Reidemeister three move, and we'll use them all for the movie move calculations. To use the CKT here, we'll first need to look at some smaller complexes: the ``before" and ``after" complexes of the move that slides a strand past a trivalent vertex. There are eight variations of this move: the vertex can be a sink or a source, and the moving strand can lie on top or below the vertex strands and can be oriented in two possible ways. For convenience, let's name them based on whether the vertex strands point I(n) or O(ut), the crossing strand is A(bove) or B(elow) the vertex strands, and the crossing strand is oriented L(eft) or R(ight). The following lemmas from \cite{MorrN:sl(3)} provide homotopy equivalences\footnote{Note that these maps include both homological and $q$-grading shifts, and so are not completely honest homotopy equivalences of the two sides of the move.} for two of the variations using Bar-Natan's simplification algorithm \cite{BN07_MR2320156}.

\begin{lemma}[IBL variation: Lemma 4.4 in \cite{MorrN:sl(3)}]
\label{lem:IBL}
The complex
{
\newcommand{\pd}[1]{\mathfig{0.08}{strand_past_vertex/IBL/#1}}
\newcommand{\pa}[1]{\mathfig{0.1}{strand_past_vertex/IBL/#1}}
\begin{equation*}
\Kh{\pd{IBL_initial}} =
 \left(
  \xymatrix@C+=12mm{
  q^4 \pd{initial1}
    \ar[r]^{\psmallmatrix{\text{z} \\ \text{z}}} &
  \directSumStack{q^5 \pd{initial2}}{q^5 \pd{initial3}}
    \ar[r]^{\psmallmatrix{-\text{z} & \text{z}}} &
  q^6 \pd{initial4}
  }
 \right)
\end{equation*}
is homotopy equivalent to the complex
\begin{equation*}
q^8 \Kh{\pd{IBL_final}}\shift{+2} =
 \left(
  \xymatrix{
    q^5 \pd{final1} \ar[r]^{\text{u}} &
    q^6 \pd{final2}
  }
 \right)
\end{equation*}
}
via the simplifying map $\s{IBL}$, which separates by homological height into
\begin{align*}
 \sh{IBL}{0} & = \begin{pmatrix} 0 \end{pmatrix} &
 \sh{IBL}{1} & = \begin{pmatrix} - \text{z} \compose \text{d} & 1  \end{pmatrix} &
 \sh{IBL}{2} & = \begin{pmatrix} r \end{pmatrix}.
\end{align*}
Here $\text{d}$ is the debubbling map, $\text{z}$ is the zip map, $\text{u}$ is the unzip map, and $r$ is the ``downward-open
half barrel" cobordism.
\end{lemma}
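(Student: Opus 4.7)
The plan is to apply Bar-Natan's simplification algorithm from \cite{BN07_MR2320156} (Gaussian elimination on chain complexes) to the three-term initial complex, reducing it to a two-term complex. The central lemma says: if a chain complex has a direct-summand block of its differential which is an isomorphism, then the pair of summands it connects can be canceled, and the remaining differential is modified by a standard ``Schur complement'' correction. Throughout, the arithmetic lives in $\Cob{3}$, where delooping-style isomorphisms are produced by the debubbling relation coming from the spider relation $\mathfig{0.045}{webs/bubble} = q\;\mathfig{0.009}{webs/tall_strand} + q^{-1}\;\mathfig{0.009}{webs/tall_strand}$.

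First I would expand both resolutions of each of the two crossings appearing in $\pd{IBL\_initial}$ using the categorified skein relation from Section \ref{ssec:link-homology}, confirming that the resulting tensor-product complex has the four smoothing terms at heights $0,1,1,2$ with the stated $q$-shifts, and that its differentials are all zips $z$ (with Koszul signs yielding the matrix $\psmallmatrix{z\\z}$ followed by $\psmallmatrix{-z & z}$). Next I would look at one of the two height-1 webs, identify a bubble (a closed loop or equivalent local piece detectable via the local web), and apply the debubbling isomorphism $\text{d}$: this expresses that web as a direct sum of two pieces, one of which is isomorphic to the height-0 web $\pd{initial1}$ (after accounting for $q$-shifts: $q^4$ embedded in $q^{5} \cdot (q+q^{-1})\,\pd{initial1}$). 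Under this delooping, the corresponding $z$ entry of the differential $q^4 \pd{initial1}\to q^5\pd{initial2}$ becomes an identity on that summand, i.e.\ an invertible block.

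I would then Gaussian-eliminate that invertible block. This cancels the height-0 term together with the isomorphic summand of the delooped height-1 web, leaving a complex with one object (a direct summand of a formerly height-1 web) at height 1 and the original $q^6$ web at height 2, joined by a single differential. A direct computation with the closed-foam relations \eqref{eq:closed_foams}, neck-cutting \eqref{eq:neck_cutting}, and the sheet/blister relations identifies this residual differential as the unzip map $u$ on the unique crossing of $\pd{IBL\_final}$, so the reduced complex is exactly $q^8 \Kh{\pd{IBL\_final}}\shift{+2}$ as claimed. The homological shift $\shift{+2}$ comes from the fact that the surviving term sits at the old height $1$, while $\Kh{\pd{IBL\_final}}$ naturally places its smooth resolution at height $-1$ after the grading conventions.

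Finally I would extract the explicit simplifying map $\s{IBL}$ by tracking what Gaussian elimination does to each homological degree. In degree $0$ the entire term is killed, giving $\sh{IBL}{0}=(0)$. In degree $1$, restricted to the surviving summand, the map on the non-canceled piece is the identity while on the canceled piece it is corrected by the Schur complement term $-z\circ d$ (the composition of the original $z$ with the debubbling inverse that identified the invertible block), yielding $\sh{IBL}{1}=(-z\circ d\ \ 1)$. In degree $2$, the identification of $q^6 \pd{initial4}$ with $q^6\pd{final2}$ is realized geometrically by the half-barrel cobordism $r$, giving $\sh{IBL}{2}=(r)$. The main obstacle is the careful bookkeeping at two points: verifying that the delooping produces the claimed invertible block (which requires manipulating bubbles and closed seams via the closed-foam and rocket relations until they reduce to the identity), and checking that the Schur-complement correction really is $-z\circ d$ with the correct sign determined by the Koszul rule on the initial differential.
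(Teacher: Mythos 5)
There is a genuine gap: your reduction stops one cancellation short, and the object count does not work out. The initial complex has four webs (one at height $0$, two at height $1$, one at height $2$), while the target has two. After you debubble the single height-$1$ web containing the bigon (splitting it as $q^4\oplus q^6$ copies of the doubly smooth web) and cancel its $q^4$ summand against the height-$0$ object by one Gaussian elimination, you are left with \emph{three} summands: the $q^6$ summand of the debubbled web and the untouched $q^5$ web at height $1$, plus the doubly I-resolved $q^6$ web at height $2$ --- not ``one object at height 1 and the original $q^6$ web at height 2.'' The missing ingredient is the \emph{desquaring} isomorphism: the doubly I-resolved web contains a square face and must be split, via the categorified square relation (Appendix \ref{ssec:isomorphisms}), into two summands; one of these is canceled against the leftover $q^6$ summand of the debubbled web by a second Gaussian elimination, and the other is the web appearing at height $2$ of the target. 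This is exactly how the paper handles the sibling variations in Appendix \ref{ssec:SPV-moves}: apply \emph{both} web isomorphisms (debubbling and desquaring) first, observe two adjacent identity blocks, and strip them off with double Gaussian elimination (Lemma \ref{lem:double-gaussian}), then compose with the isomorphisms to read off $\s{IBL}$ and $\T{IBL}$.

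This omission also corrupts your description of the simplifying map. The surviving height-$1$ object is the \emph{untouched} web (the one with $q$-shift $5$, which is literally the I-resolved web of the final diagram), not ``a direct summand of the formerly [debubbled] height-1 web''; its summands sit in shifts $4$ and $6$ and both get canceled, which is consistent with $\sh{IBL}{1}=(-\text{z}\compose\text{d}\ \ 1)$ having the identity on the untouched web and the Schur-type correction $-\text{z}\compose\text{d}$ on the debubbled one. Likewise, at height $2$ the doubly I-resolved web is \emph{not} isomorphic to the final smooth web, so it cannot be ``identified'' with it by $r$; rather, $r$ (the half-barrel) is one component of the desquaring isomorphism, and it appears as $\sh{IBL}{2}=(r)$ precisely because the other desquared summand is the one that cancels. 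Your claim that closed-foam, neck-cutting, and blister relations turn the residual differential into the unzip on a complex that still contains the square web would fail; those relations enter only in computing the differentials of the desquared/debubbled complex before elimination.
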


\begin{remark} For our movie move calculations, we'll also need the inverse (unsimplifying) map $\T{IBL}$, given by
\begin{align*}
 \Th{IBL}{1} & = \begin{pmatrix} -\text{b} \compose \text{u} \\ 1 \end{pmatrix} &
 \Th{IBL}{2} & = \begin{pmatrix} -\bar{r} \end{pmatrix},
\end{align*}
where $\text{b}$ is the bubbling map and $\bar{r}$ is the ``upward-open half barrel."
\end{remark}


These maps are shown in Figure \ref{fig:dbrr}, and their origin is discussed in Appendix \ref{sec:simplifying}.

\begin{figure}[ht]
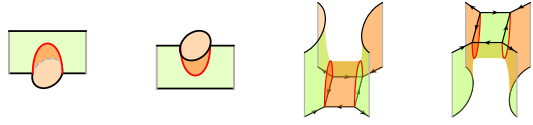

    \begin{equation*}
        \mathfig{0.08}{foams/debubble} \quad \quad
        \mathfig{0.08}{foams/bubble} \quad \quad
        \mathfig{0.08}{cobordisms/rocket_relation/rocket_x_lower} \quad \quad
        \mathfig{0.08}{cobordisms/rocket_relation/rocket_x_upper}
    \end{equation*}
    \caption[The maps $\text{d}$, $\text{b}$, $r$, and $\bar{r}$]{The maps $\text{d}$, $\text{b}$, $r$, and $\bar{r}$.}
    \label{fig:dbrr}
\end{figure}

\begin{lemma}[OBL variation: Lemma 4.5 in \cite{MorrN:sl(3)}]
\label{lem:OBL}
The complex
{
\newcommand{\pd}[1]{\mathfig{0.08}{strand_past_vertex/OBL/#1}}
\newcommand{\pa}[1]{\mathfig{0.1}{strand_past_vertex/OBL/#1}}
\begin{equation*}
\Kh{\pd{OBL_initial}} =
 \left(
  \xymatrix@C+=12mm{
  q^4 \pd{initial1}
    \ar[r]^{\psmallmatrix{\text{z} \\ \text{z}}} &
  \directSumStack{q^5 \pd{initial2}}{q^5 \pd{initial3}}
    \ar[r]^{\psmallmatrix{\text{z} & -\text{z}}} &
  q^6 \pd{initial4}
  }
 \right)
\end{equation*}
is homotopy equivalent to the complex
\begin{equation*}
q^8 \Kh{\pd{OBL_final}}\shift{+2} =
 \left(
  \xymatrix{
    q^5 \pd{final1} \ar[r]^{\text{u}} &
    q^6 \pd{final2}
  }
 \right)
\end{equation*}
}
via the simplifying map $\s{OBL}$, given by
\begin{align*}
 \sh{OBL}{0} & = \begin{pmatrix} 0 \end{pmatrix} &
 \sh{OBL}{1} & = \begin{pmatrix} \text{z} \compose \text{d} & -1 \end{pmatrix} &
 \sh{OBL}{2} & = \begin{pmatrix} r \end{pmatrix}.
\end{align*}
\end{lemma}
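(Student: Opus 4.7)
My plan is to prove the OBL variation by running Bar-Natan's simplification algorithm \cite{BN07_MR2320156} on the stated complex, mirroring almost verbatim the proof of the IBL variation (Lemma \ref{lem:IBL}). Since the two complexes differ only in (a) reversing the orientation of the trivalent vertex (source vs.\ sink) and (b) swapping the sign of one of the two $\text{z}$ entries in the middle differential, I expect the IBL argument to transport once I verify that vertex-orientation reversal introduces no additional signs from the seam-swap relation.

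The first step is to exhibit a Gaussian elimination pair. In $\Mat{\Cob{3}}$, neither of the two zip maps appearing in the middle differential $\psmallmatrix{\text{z} & -\text{z}}$ is itself an isomorphism; however, by applying the spider square relation from \eqref{eq:spider} at the categorified level, the $q^6$ web at height 2 decomposes as a direct sum of two webs, one of which is canonically identified with one of the height-1 $q^5$ webs via an appropriate zip. This exposes an isomorphism in a $2\times 2$ block of the differential, which we cancel using Bar-Natan's Gaussian elimination lemma. A parallel analysis between heights 0 and 1 then simplifies the remaining $q^4$-to-$q^5$ differential via a second Gaussian cancellation, again using a spider-relation decomposition.

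Once these cancellations are performed, what survives is a two-term complex which, after accounting for the eliminated summands and the shifts $q^8$ and $\shift{+2}$, matches the claimed simplified complex. The residual differential, read off from the Gaussian elimination bookkeeping and the coefficients $\tfrac{1}{3}, -\tfrac{1}{9}, \tfrac{1}{3}$ coming from the neck-cutting relation \eqref{eq:neck_cutting}, reduces to exactly the unzip map $\text{u}$. The induced chain map from the initial complex to its simplification then reads off as $\s{OBL}$ in the statement, with the sign in $\sh{OBL}{1} = \begin{pmatrix} \text{z}\compose\text{d} & -1 \end{pmatrix}$ forced precisely by the sign swap in the middle differential relative to the IBL case.

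The main obstacle will be confirming that reversing the orientation of the trivalent vertex (from sink to source) does not introduce hidden signs. Because the cyclic ordering on the three 2-cells meeting a singular seam is tied to the local vertex orientation, a naive orientation reversal could flip signs via the seam-swap relation, which would then propagate through the debubbling, bubbling, and half-barrel cobordisms $\text{d}, \text{b}, r, \bar{r}$ appearing in the formulas. I would address this by tabulating the corresponding cobordisms in the IBL and OBL settings and checking that their induced cyclic orderings agree, so that no extra signs appear beyond those already recorded. Given the local nature of all cobordisms involved, this check should be routine but unavoidable, and it is the one place where care is essential.
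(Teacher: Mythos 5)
Your overall plan---transport the IBL argument by categorifying the Kuperberg relations and then Gaussian eliminating, following Bar-Natan---is exactly the method this paper uses for the six sibling variations in Lemma \ref{lem:spv} (Appendix \ref{ssec:SPV-moves}); the OBL case itself is simply cited from Morrison--Nieh. However, two details of your execution are wrong as written, and the first would derail the computation. The cancelling isomorphism between heights $1$ and $2$ is \emph{not} obtained by identifying a desquared summand of the $q^6$ web with one of the height-$1$ $q^5$ webs ``via an appropriate zip'': a zip is never invertible in $\Mat{\Cob{3}}$, and the $q$-gradings do not even match. What actually happens (compare the IBR/OBR computations in the appendix) is that you must \emph{also} debubble the bigon sitting in one of the height-$1$ webs, $q^5(\text{bigon web}) \iso q^4 \oplus q^6$; the $q^6$ debubbled summand then agrees with a desquared summand of the height-$2$ web via an identity morphism, while the $q^4$ debubbled summand agrees with the height-$0$ (doubly smoothly-resolved) web via an identity. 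These two adjacent identities are what Lemma \ref{lem:double-gaussian} strips off, leaving the claimed two-term complex; note that the single debubbling feeds \emph{both} cancellations, so there is no separate ``spider-relation decomposition'' between heights $0$ and $1$ of the kind you describe.

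Second, the coefficients $\tfrac13, -\tfrac19, \tfrac13$ from neck cutting play no role here: no circle occurs anywhere in these complexes, so delooping is never invoked. The post-decomposition differentials are computed using the blister and airlock relations, together with the $\tfrac12$'s built into the debubbling maps (tube relation), exactly as in the appendix proof. With those corrections the rest of your outline is sound: as in the OBR case one should expect the eliminated complex to carry minus the desired differential, and the extra sign-fixing isomorphism is precisely what produces the $-1$ in $\sh{OBL}{1}$ (and the $-\bar r$ in $\T{OBL}$) relative to IBL. Your concern about the seam-swap relation is harmless: that relation applies only to closed singular seams, whereas every seam occurring in $\text{d}$, $\text{b}$, $r$, $\bar r$ has boundary, so its cyclic ordering is dictated by the vertex orientations of the boundary webs; the source-versus-sink difference is already encoded in the given differentials and introduces no hidden signs.
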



\begin{remark} Again we'll need the inverse map $\T{OBL}$, which is given by
\begin{align*}
 \Th{OBL}{1} & = \begin{pmatrix} \text{b} \compose \text{u} \\ -1 \end{pmatrix} &
 \Th{OBL}{2} & = \begin{pmatrix} -\bar{r} \end{pmatrix}.
\end{align*}
\end{remark}

Explicit homotopy equivalences for the other six variations of the ``strand-past-vertex" move are given below.

\begin{lemma}
\label{lem:spv}
\begin{enumerate}
\item IBR: The complex
{
\newcommand{\pd}[1]{\mathfig{0.08}{strand_past_vertex/IBR/#1}}
\newcommand{\pa}[1]{\mathfig{0.1}{strand_past_vertex/IBR/#1}}
\begin{equation*}
\label{eq:IBR}
\Kh{\pd{IBR_initial}} =
 \left(
  \xymatrix@C+=12mm{
  q^{-6} \pd{initial1}
    \ar[r]^{\psmallmatrix{\text{u} \\ -\text{u}}} &
  \directSumStack{q^{-5} \pd{initial2}}{q^{-5} \pd{initial3}}
    \ar[r]^{\psmallmatrix{\text{u} & \text{u}}} &
  q^{-4} \pd{initial4}
  }
 \right)
\end{equation*}
is homotopy equivalent to the complex
\begin{equation*}
q^{-8} \Kh{\pd{IBR_final}}\shift{-2} =
 \left(
  \xymatrix{
    q^{-6} \pd{final1} \ar[r]^{\text{u}} &
    q^{-5} \pd{final2}
  }
 \right)
\end{equation*}
}
via the simplifying map $\s{IBR}$, given by
\begin{align*}
 \sh{IBR}{-2} & = \begin{pmatrix} r \end{pmatrix} &
 \sh{IBR}{-1} & = \begin{pmatrix} -\text{z} \compose \text{d} & 1 \end{pmatrix} &
 \sh{IBR}{0} & = \begin{pmatrix} 0 \end{pmatrix}.
\end{align*}

\noindent The inverse map $\T{IBR}$ is given by
\begin{align*}
 \Th{IBR}{-2} & = \begin{pmatrix} -\bar{r} \end{pmatrix} &
 \Th{IBR}{-1} & = \begin{pmatrix} -\text{b} \compose \text{u} \\ 1 \end{pmatrix}.
\end{align*}


\item OBR: The complex
{
\newcommand{\pd}[1]{\mathfig{0.08}{strand_past_vertex/OBR/#1}}
\newcommand{\pa}[1]{\mathfig{0.1}{strand_past_vertex/OBR/#1}}
\begin{equation*}
\label{eq:OBR}
\Kh{\pd{OBR_initial}} =
 \left(
  \xymatrix@C+=12mm{
  q^{-6} \pd{initial1}
    \ar[r]^{\psmallmatrix{-\text{u} \\ \text{u}}} &
  \directSumStack{q^{-5} \pd{initial2}}{q^{-5} \pd{initial3}}
    \ar[r]^{\psmallmatrix{\text{u} & \text{u}}} &
  q^{-4} \pd{initial4}
  }
 \right)
\end{equation*}
is homotopy equivalent to the complex
\begin{equation*}
q^{-8} \Kh{\pd{OBR_final}}\shift{-2} =
 \left(
  \xymatrix{
    q^{-6} \pd{final1} \ar[r]^{\text{u}} &
    q^{-5} \pd{final2}
  }
 \right)
\end{equation*}
}
via the simplifying map $\s{OBR}$, given by
\begin{align*}
 \sh{OBR}{-2} & = \begin{pmatrix} r \end{pmatrix} &
 \sh{OBR}{-1} & = \begin{pmatrix}  \text{z} \compose \text{d} & -1 \end{pmatrix} &
 \sh{OBR}{0} & = \begin{pmatrix} 0 \end{pmatrix}.
\end{align*}

\noindent The inverse map $\T{OBR}$ is given by
\begin{align*}
 \Th{OBR}{-2} & = \begin{pmatrix} -\bar{r} \end{pmatrix} &
 \Th{OBR}{-1} & = \begin{pmatrix} \text{b} \compose \text{u} \\ -1 \end{pmatrix}.
\end{align*}


\item IAR: The complex
{
\newcommand{\pd}[1]{\mathfig{0.08}{strand_past_vertex/IAR/#1}}
\newcommand{\pa}[1]{\mathfig{0.1}{strand_past_vertex/IAR/#1}}
\begin{equation*}
\label{eq:IAR}
\Kh{\pd{IAR_initial}} =
 \left(
  \xymatrix@C+=12mm{
  q^4 \pd{initial1}
    \ar[r]^{\psmallmatrix{\text{z} \\ \text{z}}} &
  \directSumStack{q^5 \pd{initial2}}{q^5 \pd{initial3}}
    \ar[r]^{\psmallmatrix{\text{z} & -\text{z}}} &
  q^6 \pd{initial4}
  }
 \right)
\end{equation*}
is homotopy equivalent to the complex
\begin{equation*}
q^8 \Kh{\pd{IAR_final}}\shift{+2} =
 \left(
  \xymatrix{
    q^5 \pd{final1} \ar[r]^{\text{u}} &
    q^6 \pd{final2}
  }
 \right)
\end{equation*}
}
via the simplifying map $\s{IAR}$, given by
\begin{align*}
 \sh{IAR}{0} & = \begin{pmatrix} 0 \end{pmatrix} &
 \sh{IAR}{1} & = \begin{pmatrix} \text{z} \compose \text{d} & -1 \end{pmatrix} &
 \sh{IAR}{2} & = \begin{pmatrix} r \end{pmatrix}.
\end{align*}

\noindent The inverse map $\T{IAR}$ is given by
\begin{align*}
 \Th{IAR}{1} & = \begin{pmatrix} \text{b} \compose \text{u} \\ -1 \end{pmatrix} &
 \Th{IAR}{2} & = \begin{pmatrix} -\bar{r} \end{pmatrix}.
\end{align*}


\item OAR: The complex
{
\newcommand{\pd}[1]{\mathfig{0.08}{strand_past_vertex/OAR/#1}}
\newcommand{\pa}[1]{\mathfig{0.1}{strand_past_vertex/OAR/#1}}
\begin{equation*}
\label{eq:OAR}
\Kh{\pd{OAR_initial}} =
 \left(
  \xymatrix@C+=12mm{
  q^4 \pd{initial1}
    \ar[r]^{\psmallmatrix{\text{z} \\ \text{z}}} &
  \directSumStack{q^5 \pd{initial2}}{q^5 \pd{initial3}}
    \ar[r]^{\psmallmatrix{-\text{z} & \text{z}}} &
  q^6 \pd{initial4}
  }
 \right)
\end{equation*}
is homotopy equivalent to the complex
\begin{equation*}
q^8 \Kh{\pd{OAR_final}}\shift{+2} =
 \left(
  \xymatrix{
    q^5 \pd{final1} \ar[r]^{\text{u}} &
    q^6 \pd{final2}
  }
 \right)
\end{equation*}
}
via the simplifying map $\s{OAR}$, given by
\begin{align*}
 \sh{OAR}{0} & = \begin{pmatrix} 0 \end{pmatrix} &
 \sh{OAR}{1} & = \begin{pmatrix} - \text{z} \compose \text{d} & 1  \end{pmatrix} &
 \sh{OAR}{2} & = \begin{pmatrix} r \end{pmatrix}.
\end{align*}

\noindent The inverse map $\T{OAR}$ is given by
\begin{align*}
 \Th{OAR}{1} & = \begin{pmatrix} -\text{b} \compose \text{u} \\ 1 \end{pmatrix} &
 \Th{OAR}{2} & = \begin{pmatrix} -\bar{r} \end{pmatrix}.
\end{align*}


\item IAL: The complex
{
\newcommand{\pd}[1]{\mathfig{0.08}{strand_past_vertex/IAL/#1}}
\newcommand{\pa}[1]{\mathfig{0.1}{strand_past_vertex/IAL/#1}}
\begin{equation*}
\label{eq:IAL}
\Kh{\pd{IAL_initial}} =
 \left(
  \xymatrix@C+=12mm{
  q^{-6} \pd{initial1}
    \ar[r]^{\psmallmatrix{-\text{u} \\ \text{u}}} &
  \directSumStack{q^{-5} \pd{initial2}}{q^{-5} \pd{initial3}}
    \ar[r]^{\psmallmatrix{\text{u} & \text{u}}} &
  q^{-4} \pd{initial4}
  }
 \right)
\end{equation*}
is homotopy equivalent to the complex
\begin{equation*}
q^{-8} \Kh{\pd{IAL_final}}\shift{-2} =
 \left(
  \xymatrix{
    q^{-6} \pd{final1} \ar[r]^{\text{u}} &
    q^{-5} \pd{final2}
  }
 \right)
\end{equation*}
}
via the simplifying map $\s{IAL}$, given by
\begin{align*}
 \sh{IAL}{-2} & = \begin{pmatrix} r \end{pmatrix} &
 \sh{IAL}{-1} & = \begin{pmatrix}  \text{z} \compose \text{d} & -1 \end{pmatrix} &
 \sh{IAL}{0} & = \begin{pmatrix} 0 \end{pmatrix}.
\end{align*}

\noindent The inverse map $\T{IAL}$ is given by
\begin{align*}
 \Th{IAL}{-2} & = \begin{pmatrix} -\bar{r} \end{pmatrix} &
 \Th{IAL}{-1} & = \begin{pmatrix} \text{b} \compose \text{u} \\ -1 \end{pmatrix}.
\end{align*}


\item OAL: The complex
{
\newcommand{\pd}[1]{\mathfig{0.08}{strand_past_vertex/OAL/#1}}
\newcommand{\pa}[1]{\mathfig{0.1}{strand_past_vertex/OAL/#1}}
\begin{equation*}
\label{eq:OAL}
\Kh{\pd{OAL_initial}} =
 \left(
  \xymatrix@C+=12mm{
  q^{-6} \pd{initial1}
    \ar[r]^{\psmallmatrix{\text{u} \\ -\text{u}}} &
  \directSumStack{q^{-5} \pd{initial2}}{q^{-5} \pd{initial3}}
    \ar[r]^{\psmallmatrix{\text{u} & \text{u}}} &
  q^{-4} \pd{initial4}
  }
 \right)
\end{equation*}
is homotopy equivalent to the complex
\begin{equation*}
q^{-8} \Kh{\pd{OAL_final}}\shift{-2} =
 \left(
  \xymatrix{
    q^{-6} \pd{final1} \ar[r]^{\text{u}} &
    q^{-5} \pd{final2}
  }
 \right)
\end{equation*}
}
via the simplifying map $\s{OAL}$, given by
\begin{align*}
 \sh{OAL}{-2} & = \begin{pmatrix} r \end{pmatrix} &
 \sh{OAL}{-1} & = \begin{pmatrix} -\text{z} \compose \text{d} & 1 \end{pmatrix} &
 \sh{OAL}{0} & = \begin{pmatrix} 0 \end{pmatrix}.
\end{align*}

\noindent The inverse map $\T{OAL}$ is given by
\begin{align*}
 \Th{OAL}{-2} & = \begin{pmatrix} -\bar{r} \end{pmatrix} &
 \Th{OAL}{-1} & = \begin{pmatrix} -\text{b} \compose \text{u} \\ 1 \end{pmatrix}.
\end{align*}

\end{enumerate}

\end{lemma}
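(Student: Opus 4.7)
The plan is to handle the six variations in parallel, exploiting the close similarity between their complexes and the already-established IBL and OBL cases (Lemmas \ref{lem:IBL} and \ref{lem:OBL}). For each variation the initial complex has the same three-term shape: a single web at one end, a direct sum of two webs in the middle, and a single web at the other end, with differentials that are zip (or unzip) maps carrying various signs. Up to an overall grading shift, the only real difference between the eight variations lies in these signs and in whether the maps are zips or unzips.

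For each variation, I would apply the categorified Kauffman trick (the Bar-Natan simplification algorithm of \cite{BN07_MR2320156}) exactly as Morrison and Nieh do for IBL. One first recognizes, using the closed-foam relations of $\Cob{3}$, that a composition of a zip with a half-barrel cobordism produces an isomorphism component in one of the differentials between the two web summands at the middle height. Performing Gaussian elimination against this isomorphism cancels two objects and collapses the complex to the desired two-term complex, with the announced unzip as its differential; chasing the identifications used in the elimination then yields the explicit formulas for $\sh{XYZ}{k}$, and reversing the chase yields $\Th{XYZ}{k}$.

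Rather than redo this CKT from scratch eight times, I would organize the work by recognizing that $\Ortang$ carries three natural involutions that permute the labels: (i) a horizontal reflection that reverses the orientation of the crossing strand, swapping L with R and consequently swapping positive with negative crossings (this accounts for the swap $q^{\pm 8}\shift{+2} \leftrightarrow q^{\mp 8}\shift{-2}$ between the two grading patterns); (ii) a vertical reflection of the can swapping A with B, again changing both crossing signs; and (iii) an orientation reversal at the vertex, swapping I with O, which interchanges zip and unzip in the maps bordering the vertex. Each of the six remaining variations is the image of IBL or OBL under a composition of these involutions, and the simplifying and unsimplifying maps transform under them in a prescribed way. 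Applying the involutions to the formulas of Lemmas \ref{lem:IBL} and \ref{lem:OBL} produces exactly the sign patterns asserted for $\sh{XYZ}{k}$ and $\Th{XYZ}{k}$.

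The main obstacle is bookkeeping signs and cyclic orderings consistently. The ``seam-swap'' relation means that reversing the cyclic ordering around any singular seam contributes a minus sign, and the signs in the zip and unzip maps depend on those orderings; each of the three involutions above touches orderings in a slightly different way. The cleanest way to handle this is to fix, once and for all, the convention for reading off cyclic orderings from the diagrams used in \cite{MorrN:sl(3)}, apply each involution rigorously to the whole diagram-with-foam data, and compare the resulting signs to those stated. Once the maps are written down, verifying that they are chain maps, mutually inverse up to homotopy, and give genuine homotopy equivalences is a routine local-relation check on foams, carried out in each case exactly as in the proofs of Lemmas \ref{lem:IBL} and \ref{lem:OBL}.
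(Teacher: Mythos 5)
Your fallback plan---running the Bar--Natan/Morrison--Nieh simplification on each complex separately---is in fact exactly what the paper does in Appendix \ref{ssec:SPV-moves}: one first applies the desquaring and delooping (debubbling) isomorphisms of Appendix \ref{ssec:isomorphisms}, computes the new differentials using the blister and airlock relations, then performs a \emph{double} Gaussian elimination (Lemma \ref{lem:double-gaussian}) against the two identity components that appear, and finally composes the elimination maps with the splitting isomorphisms (plus, for OBR, one extra sign-correcting isomorphism) to obtain the stated $\s{XYZ}$ and $\T{XYZ}$. Your description of the mechanism is off, though: the isomorphism components are not produced by ``a zip composed with a half-barrel,'' and there is no differential ``between the two web summands at the middle height'' (they sit in the same homological degree); the identities only appear after the categorified spider isomorphisms split the relevant objects, and two non-composable cancellations (removing four objects, not two) are needed to reach the two-term complex.

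The symmetry shortcut, as stated, has a genuine gap. First, involution (iii) is contradicted by the statements themselves: IBL and OBL differ exactly by swapping I and O, yet both initial complexes have zip differentials, both final complexes have the unzip differential $\text{u}$, and both carry the shifts $q^{8}\shift{+2}$; the I/O swap only relocates minus signs, it does not interchange zip and unzip. Second, and more seriously, the reflections in (i) and (ii) reverse crossing signs, and crossing-sign reversal does not act covariantly on these complexes: the complex of a negative crossing is not the reflected complex of a positive crossing but, up to shifts, its dual, with homological degrees and arrows reversed. So ``applying the involutions to the formulas of Lemmas \ref{lem:IBL} and \ref{lem:OBL}'' cannot directly output a simplifying map for, say, IBR; at best it identifies the IBR complexes with duals of the IBL complexes, under which simplifying and unsimplifying maps trade places (indeed, up to signs, $\sh{IBR}{-1}$ is the transpose of $\Th{IBL}{1}$ with $\text{b}\compose\text{u}$ replaced by $\text{z}\compose\text{d}$, and $\sh{IBR}{-2}$ corresponds to $\Th{IBL}{2}$ with $\bar{r}$ replaced by $r$). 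To make the transport argument honest you would have to set up this contravariant duality (and track how it interacts with the seam-swap and grading conventions), which your proposal does not do; otherwise you must simply carry out the six computations directly, as the paper does for IBR and OBR, noting only that IAR and OAR are reflected versions and the rest are analogous.
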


\begin{proof}

See Appendix \ref{ssec:SPV-moves}.

\end{proof}

\begin{remark}
Notice that, modulo orientations, reflections, and rotations, the above complexes are very similar. This leads us to make the following observations, which we'll use when computing the $R3$ moves:
\begin{enumerate}
\item For moves IBL, OBL, IAR, and OAR, the \emph{lowest} homological component of the $s$ map, which originates at the doubly smoothly-resolved object, is zero; the highest component is the half-barrel $r$.
\item For moves IBR, OBR, IAL, and OAL, the \emph{highest} homological component of the $s$ map, which originates at the doubly I-resolved object, is zero; the lowest component is $r$.
\end{enumerate}
\end{remark}

Before explicitly computing the eight $R3$ maps, we'll need some basic results from homological algebra.

\begin{defn}
Given a chain map $f : A^{\bullet} \rightarrow B^{\bullet}$, the cone over $f$ is $C(f)^{\bullet} = A^{\bullet + 1} \oplus B^{\bullet}$, with differential
$$ d_{C(f)} =
\begin{pmatrix}
d_A & 0 \\
f & -d_B \end{pmatrix}$$
\end{defn}

\begin{defn}
\label{def:simple}
A map $r:B^\bullet \rightarrow C^\bullet$ is a \emph{strong deformation retract}\footnote{In \cite{MorrN:sl(3)} this is called a \emph{simple homotopy equivalence}.} with inverse $i$ if
\begin{itemize}
\item $\Id_B - i r = d_B h + h d_B$;
\item $\Id_C = r i$; and
\item $h i = r h = 0$,
\end{itemize}
where $h:B^\bullet \rightarrow B^{\bullet - 1}$.
\end{defn}

\begin{remark}
Each $s$ map above is a strong deformation retract with inverse $t$; see Appendix \ref{sec:simplifying}.
\end{remark}

The following two lemmas about cones were first presented and proven in \cite{BN05_MR2174270}; we'll refer to them as ``cone-reducing" lemmas:

\begin{lemma} \label{lem:f-to-rf} If $f : A^{\bullet} \rightarrow B^{\bullet}$ is a chain map, $r : B^{\bullet} \rightarrow C^{\bullet}$ is a strong deformation retract, and $i : C^{\bullet} \rightarrow B^{\bullet}$ is the inverse of $r$ via the homotopy $h$, then the cone $C(rf)$ is homotopic to the cone $C(f)$, via
$$
\xymatrix{ C(f)^{\bullet} = A^{\bullet + 1} \oplus B^{\bullet} \ar@/_/[rr]_{\psmallmatrix{1 & 0 \\ 0 & r}} & & A^{\bullet + 1} \oplus C^{\bullet} =  C(rf)^{\bullet} \ar@/_/[ll]_{\psmallmatrix{1 & 0 \\ -hf & i}}}
$$
\end{lemma}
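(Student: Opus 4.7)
The proof is pure matrix algebra. Both maps in the statement are written as $2 \times 2$ matrices between the bigraded direct sums $C(f)^\bullet = A^{\bullet+1} \directSum B^\bullet$ and $C(rf)^\bullet = A^{\bullet+1} \directSum C^\bullet$, so the work reduces to a short list of block computations using only the chain-map property of $f$ and $r$ and the three defining identities of a strong deformation retract (Definition \ref{def:simple}). The plan is: check each matrix is a chain map, compute the two compositions, and exhibit an explicit homotopy on the side where the composition is not literally the identity.

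For the chain-map checks, the forward map $\psmallmatrix{1 & 0 \\ 0 & r}$ is immediate: the only nontrivial block identity says $r d_B = d_C r$, which is just that $r$ is a chain map. The backward map $\psmallmatrix{1 & 0 \\ -hf & i}$ is the only real computation; comparing $d_{C(f)} \compose \psmallmatrix{1 & 0 \\ -hf & i}$ with $\psmallmatrix{1 & 0 \\ -hf & i} \compose d_{C(rf)}$ block by block, the lower-left position produces an identity of the shape
\[ f + d_B h f + h f d_A = i r f, \]
which I would reduce by rewriting $h f d_A$ as $h d_B f$ using the chain-map property of $f$ and then collapsing both sides with the homotopy identity $\Id_B - i r = d_B h + h d_B$.

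For the compositions, one direction works on the nose: $\psmallmatrix{1 & 0 \\ 0 & r} \compose \psmallmatrix{1 & 0 \\ -hf & i} = \Id_{C(rf)}$ using $r h = 0$ and $r i = \Id_C$. The other direction gives $\psmallmatrix{1 & 0 \\ -hf & i r}$, which is not the identity, so I would take the natural homotopy candidate $H = \psmallmatrix{0 & 0 \\ 0 & h} : C(f)^\bullet \To C(f)^{\bullet - 1}$ and verify that $d_{C(f)} \compose H + H \compose d_{C(f)}$ reproduces the defect $\Id_{C(f)} - \psmallmatrix{1 & 0 \\ -hf & i r}$: the lower-left entry gets $hf$ from $H \compose d_{C(f)}$, and the lower-right entry gets $\Id_B - i r$ via the same homotopy relation.

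The only real obstacle is sign-tracking. The cone differential used in the excerpt is $\psmallmatrix{d_A & 0 \\ f & -d_B}$ with a minus in the lower right, and the backward map carries a $-hf$, so the signs of the contributions from each block must be propagated carefully to confirm they match up with $\Id_B - i r = d_B h + h d_B$ and with $h i = r h = 0$. Aside from this bookkeeping, nothing in the proof requires more than one-step matrix multiplication.
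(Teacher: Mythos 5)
Your overall strategy is the right one --- the paper itself gives no proof of this lemma (it defers to Bar-Natan), and a direct block-matrix verification with $\psmallmatrix{1 & 0 \\ 0 & r}\compose\psmallmatrix{1 & 0 \\ -hf & i}=\Id$ coming from $rh=0$, $ri=\Id_C$, and the other composite handled by a homotopy of the form $\psmallmatrix{0 & 0 \\ 0 & \pm h}$, is exactly how it goes. But the sign bookkeeping you defer to the end is precisely where your argument, as written, breaks. Take the conventions literally as you quote them: $d_{C(f)}=\psmallmatrix{d_A & 0 \\ f & -d_B}$ and $\Id_B - ir = d_Bh + hd_B$. Your lower-left chain-map identity is, as you say, $f + d_Bhf + hfd_A = irf$; but substituting $hfd_A = hd_Bf$ and applying the homotopy identity turns the left side into $f + (\Id_B - ir)f = 2f - irf$, so the identity you need is $f = irf$, which is false in general (e.g.\ $A=B$, $f=\Id_B$, $ir\neq\Id_B$). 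The same sign problem recurs in your homotopy check: with $H=\psmallmatrix{0 & 0 \\ 0 & h}$ one computes $d_{C(f)}H + Hd_{C(f)} = \psmallmatrix{0 & 0 \\ hf & -(d_Bh+hd_B)} = \psmallmatrix{0 & 0 \\ hf & ir-\Id_B}$, whose lower-right entry has the opposite sign from the defect $\Id_{C(f)} - \psmallmatrix{1 & 0 \\ -hf & ir}$. So ``propagating the signs carefully'' does not confirm the claim under these conventions; it refutes it.

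The cure is a single sign adjustment, and a correct write-up must make it explicit. Either use the usual cone convention in which the differential on the shifted summand is negated, $d_{C(f)}=\psmallmatrix{-d_A & 0 \\ f & d_B}$: then the stated maps $\psmallmatrix{1 & 0 \\ 0 & r}$ and $\psmallmatrix{1 & 0 \\ -hf & i}$ really are chain maps (the lower-left condition becomes $f - d_Bhf - hd_Bf = irf$, i.e.\ $f-(\Id_B-ir)f=irf$, which is the homotopy identity) and your $H=\psmallmatrix{0 & 0 \\ 0 & h}$ works verbatim; or keep the differential as printed in the paper and instead take the backward map to be $\psmallmatrix{1 & 0 \\ hf & i}$ with homotopy $\psmallmatrix{0 & 0 \\ 0 & -h}$. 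Without committing to one of these, the two computations you outline fail at exactly the entries you flagged as ``bookkeeping,'' so as it stands the proposal has a genuine, though easily repaired, gap.
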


\begin{lemma} \label{lem:f-to-fi} If $f : B^{\bullet} \rightarrow A^{\bullet}$ is a chain map, $r : B^{\bullet} \rightarrow C^{\bullet}$ is a strong deformation retract, and $i : C^{\bullet} \rightarrow B^{\bullet}$ is the inverse of $r$ via the homotopy $h$, then the cone $C(fi)$ is homotopic to the cone $C(f)$, via
$$
\xymatrix{ C(f)^{\bullet} = B^{\bullet + 1} \oplus A^{\bullet} \ar@/_/[rr]_{\psmallmatrix{r & 0 \\ fh & 1}} & & C^{\bullet + 1} \oplus A^{\bullet} =  C(fi)^{\bullet} \ar@/_/[ll]_{\psmallmatrix{i & 0 \\ 0 & 1}}}
$$
\end{lemma}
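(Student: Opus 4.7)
Both cone-reducing lemmas follow the same template, and the proof of Lemma \ref{lem:f-to-fi} parallels (and is in some sense dual to) the proof of Lemma \ref{lem:f-to-rf} given by Bar-Natan in \cite{BN05_MR2174270}. Write $R = \psmallmatrix{r & 0 \\ fh & 1}$ and $I = \psmallmatrix{i & 0 \\ 0 & 1}$, with $R$ going $C(f) \to C(fi)$ and $I$ going back. The plan has three steps, all carried out by direct $2\times 2$ matrix manipulation using the cone differentials $d_{C(f)}$ and $d_{C(fi)}$ (which differ only in their top-left and bottom-left entries, $d_B$ vs.\ $d_C$ and $f$ vs.\ $fi$).

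The first step is to verify that both $R$ and $I$ are chain maps. For $I$, this is immediate from the chain-map property of $i$ and the shape of the matrix. For $R$, three of the four entries in $Rd_{C(f)} = d_{C(fi)} R$ reduce at once to the chain-map property of $r$ or to $0 = 0$; the only interesting entry is the bottom-left, which (after accounting for the cone's sign convention) reduces to the identity $f(\Id_B - ir) = d_A fh + fh d_B$. This identity is obtained by premultiplying the homotopy relation $\Id_B - ir = d_B h + h d_B$ by $f$ and then applying the chain-map property $f d_B = d_A f$.

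The second step is to observe that $RI = \Id_{C(fi)}$ on the nose: direct multiplication gives $\psmallmatrix{ri & 0 \\ fhi & 1}$, and the strong deformation retract axioms $ri = \Id_C$ and $hi = 0$ collapse this to the identity without any homotopy required in this direction.

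The third step is to exhibit a chain homotopy between $IR$ and $\Id_{C(f)}$. The difference $\Id_{C(f)} - IR = \psmallmatrix{\Id_B - ir & 0 \\ -fh & 0}$ has a natural candidate homotopy $H = \psmallmatrix{\pm h & 0 \\ 0 & 0}$, where the sign is determined by the sign convention chosen for $d_{C(f)}$. Computing $d_{C(f)} H + H d_{C(f)}$ produces $\psmallmatrix{\pm(d_B h + hd_B) & 0 \\ \pm fh & 0}$, which matches $\Id_{C(f)} - IR$ after invoking $\Id_B - ir = d_B h + h d_B$ once more. The main obstacle is purely bookkeeping: aligning the sign conventions for the cone differential, for the shift functor, and for the homotopy relation, so that each entry matches with exactly the right sign. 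Once these are fixed, every matrix entry reduces to one of a chain-map identity for $f$, $r$, or $i$; a retract axiom $ri = \Id$ or $hi = 0$; or the homotopy relation for $h$, and no new homological input is needed.
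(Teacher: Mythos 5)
Your direct matrix verification is correct and is essentially the argument the paper simply defers to Bar-Natan \cite{BN05_MR2320156,BN05_MR2174270}: the only nontrivial chain-map entry is the identity $f(\Id_B - ir) = d_A f h + f h d_B$, the composite $\psmallmatrix{r & 0 \\ fh & 1}\psmallmatrix{i & 0 \\ 0 & 1}$ is the identity on the nose by $ri=\Id_C$ and $hi=0$, and the other composite is homotopic to the identity via $\psmallmatrix{\pm h & 0 \\ 0 & 0}$. The only care needed is the sign bookkeeping you already flag: with the cone differential normalized so that the shifted summand carries the negated differential, the signs of $fh$ and of the homotopy come out consistently, and every entry reduces exactly as you describe.
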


We'll also need these two, which are proven analogously:

\begin{lemma} \label{lem:f-to-fr} If $f : C^{\bullet} \rightarrow A^{\bullet}$ is a chain map, $r : B^{\bullet} \rightarrow C^{\bullet}$ is a strong deformation retract, and $i : C^{\bullet} \rightarrow B^{\bullet}$ is the inverse of $r$ via the homotopy $h$, then the cone $C(fr)$ is homotopic to the cone $C(f)$, via

$$
\xymatrix{ C(f)^{\bullet} = C^{\bullet + 1} \oplus A^{\bullet} \ar@/_/[rr]_{\psmallmatrix{i & 0 \\ 0 & 1}} & & B^{\bullet + 1} \oplus A^{\bullet} =  C(fr)^{\bullet} \ar@/_/[ll]_{\psmallmatrix{r & 0 \\ 0 & 1}}}
$$
\end{lemma}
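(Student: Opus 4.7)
The plan is to verify the lemma by a direct matrix computation, in the spirit of the proofs of Lemmas \ref{lem:f-to-rf} and \ref{lem:f-to-fi}. Writing $\alpha = \psmallmatrix{i & 0 \\ 0 & 1}$ and $\beta = \psmallmatrix{r & 0 \\ 0 & 1}$, and noting that the differentials on the cones are
$$d_{C(f)} = \begin{pmatrix} d_C & 0 \\ f & -d_A \end{pmatrix}, \qquad d_{C(fr)} = \begin{pmatrix} d_B & 0 \\ fr & -d_A \end{pmatrix},$$
I first check that $\alpha$ and $\beta$ are chain maps. For $\alpha$, the comparison $d_{C(fr)} \alpha = \alpha d_{C(f)}$ reduces to $d_B i = i d_C$ (that is, $i$ is a chain map, as is implicit in the strong deformation retract data) together with $fri = f$, the latter following from $ri = \Id_C$. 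For $\beta$, the analogous check reduces to $d_C r = r d_B$, which is just the statement that $r$ is a chain map. Both calculations are immediate.

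The composition $\beta \alpha = \psmallmatrix{ri & 0 \\ 0 & 1} = \Id_{C(f)}$ holds on the nose. In the other direction, $\alpha \beta = \psmallmatrix{ir & 0 \\ 0 & 1}$, and I propose the chain homotopy
$$H = \begin{pmatrix} h & 0 \\ 0 & 0 \end{pmatrix} : C(fr)^\bullet \to C(fr)^{\bullet - 1},$$
then verify $d_{C(fr)} H + H d_{C(fr)} = \Id_{C(fr)} - \alpha \beta$ entry by entry. The $(1,1)$ entry recovers $d_B h + h d_B = \Id_B - ir$, as required, the $(2,2)$ and $(1,2)$ entries are zero, and the $(2,1)$ entry evaluates to $frh$; this last term vanishes because $rh = 0$ in the definition of a strong deformation retract.

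This vanishing is the one substantive observation in the proof: it is precisely what allows the maps $\alpha$ and $\beta$ in this variant to be diagonal, in contrast to Lemma \ref{lem:f-to-rf}, where the analogous off-diagonal obstruction must instead be absorbed into the map itself as the correction $-hf$. Beyond this single step the argument is routine matrix bookkeeping, and I do not anticipate any genuine obstacle.
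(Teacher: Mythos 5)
Your verification is correct, and it is exactly the routine matrix computation the paper has in mind when it says these two lemmas are "proven analogously" to the Bar-Natan cone-reducing Lemmas \ref{lem:f-to-rf} and \ref{lem:f-to-fi}: the chain-map checks use $ri=\Id_C$, the composite $\beta\alpha$ is the identity on the nose, and the homotopy $\psmallmatrix{h & 0 \\ 0 & 0}$ works precisely because $rh=0$ kills the off-diagonal term $frh$. No gaps; this matches the paper's (implicit) argument.
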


\begin{lemma} \label{lem:f-to-if} If $f : A^{\bullet} \rightarrow C^{\bullet}$ is a chain map, $r : B^{\bullet} \rightarrow C^{\bullet}$ is a strong deformation retract, and $i : C^{\bullet} \rightarrow B^{\bullet}$ is the inverse of $r$ via the homotopy $h$, then the cone $C(if)$ is homotopic to the cone $C(f)$, via

$$
\xymatrix{ C(f)^{\bullet} = A^{\bullet + 1} \oplus C^{\bullet} \ar@/_/[rr]_{\psmallmatrix{1 & 0 \\ 0 & i}} & & A^{\bullet + 1} \oplus B^{\bullet} =  C(if)^{\bullet} \ar@/_/[ll]_{\psmallmatrix{1 & 0 \\ 0 & r}}}
$$
\end{lemma}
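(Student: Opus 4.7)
The plan is to verify the three standard conditions directly: (i) the two matrix maps $\alpha = \psmallmatrix{1 & 0 \\ 0 & i}$ and $\beta = \psmallmatrix{1 & 0 \\ 0 & r}$ are chain maps between $C(f)$ and $C(if)$, (ii) $\beta\alpha = \Id_{C(f)}$ on the nose, and (iii) $\alpha\beta \htpy \Id_{C(if)}$ via an explicit homotopy. All three reductions are short matrix computations, so the point of the proof is to write out the differentials and use the strong-deformation-retract identities in Definition \ref{def:simple} at the right moments.

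First I would record the cone differentials $d_{C(f)} = \psmallmatrix{d_A & 0 \\ f & -d_C}$ and $d_{C(if)} = \psmallmatrix{d_A & 0 \\ if & -d_B}$. Checking that $\alpha$ is a chain map amounts to comparing $d_{C(if)}\alpha = \psmallmatrix{d_A & 0 \\ if & -d_B i}$ with $\alpha d_{C(f)} = \psmallmatrix{d_A & 0 \\ if & -i d_C}$; the lower-right entries agree because $i$ is a chain map. Similarly for $\beta$, the comparison produces $d_{C(f)}\beta = \psmallmatrix{d_A & 0 \\ f & -d_C r}$ and $\beta d_{C(if)} = \psmallmatrix{d_A & 0 \\ rif & -r d_B}$, and these agree because $r$ is a chain map and $ri = \Id_C$ (whence $rif = f$).

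Next, $\beta\alpha = \psmallmatrix{1 & 0 \\ 0 & ri} = \Id_{C(f)}$ immediately from $ri = \Id_C$. For the other composition, $\alpha\beta = \psmallmatrix{1 & 0 \\ 0 & ir}$, so the difference $\alpha\beta - \Id_{C(if)} = \psmallmatrix{0 & 0 \\ 0 & ir - \Id_B}$ lives entirely in the lower-right block. Here is where the homotopy $h$ with $\Id_B - ir = d_B h + h d_B$ enters: I propose the cone-level homotopy $H = \psmallmatrix{0 & 0 \\ 0 & h} : C(if)^\bullet \to C(if)^{\bullet - 1}$ and compute
\begin{equation*}
d_{C(if)} H + H d_{C(if)} = \psmallmatrix{0 & 0 \\ hif & -d_B h - h d_B} = \psmallmatrix{0 & 0 \\ 0 & ir - \Id_B},
\end{equation*}
where the upper-left and off-diagonal $d_A$-terms vanish because $H$ kills the $A$-factor, and the $(2,1)$-entry $hif$ vanishes because $hi = 0$ (so $hif = (hi)f = 0$). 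This equals $\alpha\beta - \Id_{C(if)}$, proving $\alpha\beta \htpy \Id_{C(if)}$.

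There is no real obstacle here: the statement is a completely formal dual of Lemma \ref{lem:f-to-rf}, with the roles of source and target of the cone-defining map swapped, and the proof is a mechanical $2\times 2$ block calculation. The only step that requires any care is remembering to invoke $hi = 0$ (rather than $rh = 0$) in order to kill the off-diagonal term $hif$; the mirror lemmas in the paper use the complementary identity, so I would flag this briefly to orient the reader.
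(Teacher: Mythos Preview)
Your proof is correct and is precisely the ``proven analogously'' computation the paper defers: a direct $2\times 2$ block verification that $\alpha$ and $\beta$ are chain maps, that $\beta\alpha=\Id$ via $ri=\Id_C$, and that $\alpha\beta\htpy\Id$ via the homotopy $H=\psmallmatrix{0&0\\0&h}$, with the side condition $hi=0$ killing the off-diagonal term. There is nothing to add.
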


Now we're ready to attack the $R3$ complexes themselves. First, let's name the eight variations, six of which are braidlike and two of which are starlike. As in \cite{ClarMW:Fix}, we'll label the braidlike moves by circling anticlockwise around the tangle boundary and recording the height of each outgoing strand ($h$ for high, $m$ for middle, and $l$ for low). The starlike moves are labeled either clockwise or anticlockwise, depending on which way we have to circle to see the low, middle, and then high outgoing strands. We also need to pick a time direction for each move, and will use the convention that the ``before" diagram has a crossing to the right of the low strand, while the ``after" diagram has a crossing to the left. All of these labels and conventions are shown in Figure \ref{fig:R3-variations}.

\begin{figure}[ht]
\newcommand{\RIII}[1]{\xymatrix@C+20pt{%
  \mathfig{0.15}{R3/variations/R3_#1_1} \ar@<0.5ex>[r]^{R3_{#1}} & \mathfig{0.15}{R3/variations/R3_#1_2} \ar@<0.5ex>[l]^{R3_{#1}^{-1}}%
}}%
\begin{align*}
\RIII{hml} & & \RIII{hlm} & \\
\RIII{lhm} & & \RIII{mhl} & \\
\RIII{mlh} & & \RIII{lmh} & \\
\xymatrix@C+20pt{%
  \mathfig{0.15}{R3/variations/R3_anti_1} \ar@<0.5ex>[r]^{R3_\circlearrowleft} & \mathfig{0.15}{R3/variations/R3_anti_2} \ar@<0.5ex>[l]^{R3_\circlearrowleft^{-1}}%
} & &
\xymatrix@C+20pt{%
  \mathfig{0.15}{R3/variations/R3_clock_1} \ar@<0.5ex>[r]^{R3_\circlearrowright} & \mathfig{0.15}{R3/variations/R3_clock_2} \ar@<0.5ex>[l]^{R3_\circlearrowright^{-1}} %
}
 &
\end{align*}
\caption[The eight variations of the $R3$ move]{The eight variations of the $R3$ move.}
\label{fig:R3-variations}
\end{figure}

The CKT works by decomposing the nine-object ``before" and ``after" complexes of an $R3$ move as cones over the local differential for a particular crossing. For the time being, let's take this to be the highest crossing. It's important at this point to introduce another set of conventions: the way in which we order crossings. For variations $hml$, $lmh$, $mlh$, and $\cw$, we'll use the following ordering: in the initial tangle the crossings will be ordered `middle', `low', `high', while in the final tangle they will be ordered `low', `middle', `high'. For variations $lhm$, $mhl$, $hlm$, $\ccw$, we'll instead use the inverse ordering: the crossings of the initial tangle will be ordered `low', `middle', `high', and of the final tangle, `middle', `low', `high'. \footnote{This ordering convention is more cumbersome still than the one used in \cite{ClarMW:Fix}, and even worse must be altered when we resolve the low crossing instead of the high. It is a necessary evil, though, as the alert reader may notice as we work though the CKT for the different variations.}

Consider, for example, the initial complex of the $hml$ move:
$$ \Kh{\mathfig{0.1}{\CKT hml/hml_initial}} \iso
    \cone{\mathfig{0.08}{\CKT hml/cone_source_1}}{\text{z}^{\text{above}}}{\mathfig{0.08}{\CKT hml/I_braiding_1}},$$
where $\text{z}^{\text{above}}$ is the zip differential for the high crossing. Morrison and Nieh used this decomposition, as well as the one for the final complex of the $hml$ move, to show that the two complexes were homotopy equivalent. We will restate their argument from \cite{MorrN:sl(3)} here, while fleshing out some more details to give us an explicit map.

\begin{remark}
The initial and final tangles in the Propositions below may be rotated relative to their definitions in Figure \ref{fig:R3-variations}, for convenience.
\end{remark}

First we need an easy lemma.

\begin{lemma}
\label{lem:IBL-OBL-equal}%
The two compositions
\begin{align*}
\xymatrix{
 \mathfig{0.08}{\CKT hml/cone_source_1} \ar[r]^{\text{z}} &
 \mathfig{0.08}{\CKT hml/I_braiding_1} \ar[r]^{\s{IBL}} &
 \mathfig{0.08}{\CKT hml/I_braiding_2}
}%
\intertext{and}%
\xymatrix{
 \mathfig{0.08}{\CKT hml/cone_source_2} \ar[r]^{\text{z}} &
 \mathfig{0.08}{\CKT hml/I_braiding_3} \ar[r]^{\s{OBL}} &
 \mathfig{0.08}{\CKT hml/I_braiding_2}
},
\end{align*}
using the maps defined in Lemmas \ref{lem:OBL} and \ref{lem:IBL}, are equal.
\end{lemma}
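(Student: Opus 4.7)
The plan is to compute both compositions explicitly as foam morphisms and verify their equality directly. Because the sources cone\_source\_1 and cone\_source\_2 represent essentially the same smoothly-resolved local picture (viewed in adjacent positions within the larger $hml$ tangle), and because the simplifying targets coincide (both land in the same two-object complex described by $r$ and $\text{u}$), there is a genuine equality of foams to be checked.

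First I would unpack each composition. In each case, the zip $\text{z}$ attaches a small zip foam at the high-crossing location of the cone source, landing in one specific summand of the middle direct sum of the $\s{IBL}$ (resp.\ $\s{OBL}$) initial complex. The matrix entry of the simplifying map on that summand then acts: either $-\text{z} \compose \text{d}$ or the identity (up to sign). Using $\sh{IBL}{1} = (-\text{z} \compose \text{d}, 1)$ and $\sh{OBL}{1} = (\text{z} \compose \text{d}, -1)$, the two resulting foam morphisms are negatives of each other at the level of matrix arithmetic.

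The key observation is that the IBL and OBL configurations differ precisely in the orientation of the nearby trivalent vertex (sink versus source), which reverses the induced orientation on the seam between the zip foam and the adjacent vertex sheet. This reversal produces an additional sign when composed with the debubble $\text{d}$ or with the identity, exactly cancelling the sign discrepancy between the $\s{IBL}$ and $\s{OBL}$ matrix entries. Consequently the two composed foams agree on each summand of the domain.

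The main obstacle is careful bookkeeping of signs across the various conventions in play: the orientations of the zip foam sheets, the cyclic orderings around singular seams that distinguish IBL from OBL pictures, and the signs appearing in the simplifying matrices. I do not expect to need deeper foam relations such as neck-cutting or the three-rocket relation; the lemma is essentially a consistency check that the two complementary strand-past-vertex simplifications are compatible when glued into the larger CKT calculation for $R3_{hml}$, and once all signs are aligned the equality should fall out directly from the matrix definitions.
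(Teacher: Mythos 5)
Your overall plan---compute both composites object-by-object and check that they agree as foams---has the same shape as the paper's argument, which disposes of the lemma by exactly such a direct comparison, using only foam isotopy and no foam relations. The gap is in the mechanism you propose for reconciling the signs. You argue that the two composites are ``negatives of each other at the level of matrix arithmetic'' because $\sh{IBL}{1} = \begin{pmatrix} -\text{z}\compose\text{d} & 1 \end{pmatrix}$ while $\sh{OBL}{1} = \begin{pmatrix} \text{z}\compose\text{d} & -1 \end{pmatrix}$, and then invoke an extra sign coming from ``reversing the induced orientation on the seam'' (sink versus source vertex) to cancel the discrepancy. Neither half of this is sound. First, those matrix entries get composed with \emph{different} zip maps---the zip at the high crossing of the initial tangle versus the zip at the corresponding crossing of the final tangle---passing through genuinely different intermediate webs, so no relation between the two total composites can be read off formally from the entries; moreover at the extreme homological height both simplifying maps have the \emph{same} entry $r$, not opposite ones, so the ``global negation'' picture already fails there. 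Second, there is no relation in $\Cob{3}$ that assigns a sign to reversing the orientation or cyclic ordering of a seam with boundary: the seam-swap relation produces a $-1$ only for \emph{closed} seams, while for seams meeting the boundary webs the cyclic ordering is fixed boundary data (anticlockwise at inward vertices, clockwise at outward ones) and is not an operation one can perform to relate the IBL and OBL composites. So the central cancellation step of your argument rests on a rule that does not exist in this category.

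The correct resolution is more prosaic. The signs in $\s{IBL}$ and $\s{OBL}$ are outputs of the Gaussian elimination computations (Appendix \ref{sec:simplifying}), and the lemma is verified by drawing the composite foams on each object of the common source complex and observing that they are isotopic rel boundary; the signs built into the two simplifying maps are precisely what makes the drawn foams equal rather than opposite. Your write-up substitutes a heuristic sign rule for this concrete check, so as it stands the key step is unsupported; to repair it you would need to carry out the object-by-object foam comparison explicitly (which, as the paper notes, requires no foam relations at all).
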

\begin{proof}
This is a straightforward, object-by-object comparison; there is only foam isotopy involved---no foam relations are necessary.
\end{proof}

\begin{prop}[The $hml$ variation of $R3$]
\label{prop:R3-v1}
$$\Kh{\mathfig{0.1}{\CKT hml/hml_initial}}
\iso \vcone{\mathfig{0.08}{\CKT hml/cone_source_1}}{\text{z}^{\text{above}}}{\mathfig{0.08}{\CKT hml/I_braiding_1}}
 \xrightarrow[f_{\text{hml}}]{\htpy} \vcone{\mathfig{0.08}{\CKT hml/cone_source_2}}{\text{z}^{\text{below}}}{\mathfig{0.08}{\CKT hml/I_braiding_3}} \iso \Kh{\mathfig{0.1}{\CKT hml/hml_final}}$$
is a homotopy equivalence via the map
$$f_{\text{hml}} = \begin{pmatrix} 1 & 0 \\ -\h{OBL} \compose \text{z} & \T{OBL} \compose \s{IBL} \end{pmatrix}.$$
The homotopy inverse of this map is given by
$$g_{\text{hml}} = \begin{pmatrix} 1 & 0 \\ -\h{IBL} \compose \text{z} & \T{IBL} \compose \s{OBL} \end{pmatrix}.$$
The maps $\h{OBL}$ and $\h{IBL}$ are just the homotopies for the simplifications of the OBL and IBL complexes; we won't compute them explicitly.
\end{prop}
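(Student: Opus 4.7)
The plan is to follow the ``categorified Kauffman trick'' strategy of \cite{MorrN:sl(3)}: express each nine-object $R3$ complex as a cone over the zip differential at the highest crossing, and then splice the cone-reducing Lemma \ref{lem:f-to-rf} with the equality of Lemma \ref{lem:IBL-OBL-equal} to build an explicit chain homotopy equivalence and its inverse.

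First I would rewrite the initial complex as $C(\text{z}^{\text{above}})$, with source the smoothly-resolved web $\mathfig{0.08}{\CKT hml/cone_source_1}$ and target the IBL-type strand-past-vertex complex $\mathfig{0.08}{\CKT hml/I_braiding_1}$; in parallel, the final complex becomes $C(\text{z}^{\text{below}})$ with OBL-type target $\mathfig{0.08}{\CKT hml/I_braiding_3}$ and source the planar-isotopic smoothly-resolved web $\mathfig{0.08}{\CKT hml/cone_source_2}$. Lemma \ref{lem:IBL} supplies a strong deformation retract $\s{IBL}$ of the first target onto $\mathfig{0.08}{\CKT hml/I_braiding_2}$, with inverse $\T{IBL}$ and homotopy $\h{IBL}$; Lemma \ref{lem:OBL} supplies the analogous $\s{OBL}, \T{OBL}, \h{OBL}$ for the second target.

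Next I would apply Lemma \ref{lem:f-to-rf} to each cone: this simplifies the initial complex to $C(\s{IBL} \compose \text{z}^{\text{above}})$ and the final complex to $C(\s{OBL} \compose \text{z}^{\text{below}})$, with explicit maps in both directions coming from the matrices in that lemma. By Lemma \ref{lem:IBL-OBL-equal}, the two reduced differentials $\s{IBL} \compose \text{z}^{\text{above}}$ and $\s{OBL} \compose \text{z}^{\text{below}}$ coincide on the nose, so these two simplified cones are literally equal. Composing the forward reduction of the initial cone with the inverse reduction of the final cone gives
$$ \psmallmatrix{1 & 0 \\ -\h{OBL} \compose \text{z} & \T{OBL}} \; \psmallmatrix{1 & 0 \\ 0 & \s{IBL}} \;=\; \psmallmatrix{1 & 0 \\ -\h{OBL} \compose \text{z} & \T{OBL} \compose \s{IBL}}, $$
which is exactly $f_{\text{hml}}$. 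Running the same three-step splice in the opposite direction (reducing the final cone by $\s{OBL}$, identifying, and unreducing by $\T{IBL}$) produces the inverse $g_{\text{hml}}$.

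The main obstacle is really bookkeeping rather than mathematics. One must verify that the crossing-ordering convention (``middle, low, high'' for the initial tangle, ``low, middle, high'' for the final) is chosen so that the zip at the top crossing appears in the position required for the cone decomposition, and that the signs appearing in $\s{IBL}$ versus $\s{OBL}$ are precisely the ones that let Lemma \ref{lem:IBL-OBL-equal} hold as an on-the-nose equality rather than merely up to homotopy. Once this is checked, the homotopies $\h{IBL}$ and $\h{OBL}$ themselves never need to be computed: they enter the formulas only through the composites $\h{\ast} \compose \text{z}$ that appear in the lower-left entries of $f_{\text{hml}}$ and $g_{\text{hml}}$, and that is all the subsequent movie-move verification will require.
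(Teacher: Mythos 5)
Your proposal is correct and follows essentially the same route as the paper: decompose both sides as cones over the zip at the high crossing, reduce each cone via Lemma \ref{lem:f-to-rf} using the strong deformation retracts $\s{IBL}$ and $\s{OBL}$, identify the reduced cones via Lemma \ref{lem:IBL-OBL-equal}, and compose the forward reduction with the inverse reduction to obtain $f_{\text{hml}}$ and, symmetrically, $g_{\text{hml}}$. Your explicit matrix composition, with $\T{OBL}$ in the un-reduction step, is exactly right and reproduces the stated formulas.
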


\begin{proof}
We shall follow through the composition one piece at a time.
\begin{align*}
\cone{\mathfig{0.08}{\CKT hml/cone_source_1}}{\text{z}}{\mathfig{0.08}{\CKT hml/I_braiding_1}}
        & \htpy \cone{\mathfig{0.08}{\CKT hml/cone_source_1}}{\mathrlap{\s{IBL}\compose \text{z}} \phantom{\s{OBL}\compose \text{z}}}{\mathfig{0.08}{\CKT hml/I_braiding_2}} \\
        & = \cone{\mathfig{0.08}{\CKT hml/cone_source_2}}{\s{OBL} \compose \text{z}}{\mathfig{0.08}{\CKT hml/I_braiding_2}} \\
        & \htpy \cone{\mathfig{0.08}{\CKT hml/cone_source_2}}{\mathrlap{\quad \; \text{z}} \phantom{\s{OBL}\compose \text{z}}}{\mathfig{0.08}{\CKT hml/I_braiding_3}}
\end{align*}
The homotopy equivalences on the first and last lines follow from Lemma \ref{lem:f-to-rf}, and are given by the matrices $\begin{pmatrix} 1 & 0 \\ 0 & \s{IBL} \end{pmatrix}$ and $\begin{pmatrix} 1 & 0 \\ -\h{OBL} \compose \text{z} & \T{IBL} \end{pmatrix}$. Equality on the second line is exactly Lemma \ref{lem:IBL-OBL-equal}.
\end{proof}

We'll now determine the homotopy equivalences for the other seven $R3$ variations. The techniques for $lhm$, $mhl$, and $lmh$ are essentially the same as for $hml$, and we will omit the details of the proofs. The other four moves will require some modification.

\begin{prop}
\label{prop:R3-v2,3,4}
\begin{enumerate}
\item The $lhm$ variation.
$$\Kh{\mathfig{0.1}{\CKT lhm/lhm_initial}}
\iso \vcone{\mathfig{0.08}{\CKT lhm/cone_source_1}}{\text{z}^{\text{above}}}{\mathfig{0.08}{\CKT lhm/I_braiding_1}}
 \xrightarrow[f_{\text{lhm}}]{\htpy} \vcone{\mathfig{0.08}{\CKT lhm/cone_source_2}}{\text{z}^{\text{below}}}{\mathfig{0.08}{\CKT lhm/I_braiding_3}} \iso \Kh{\mathfig{0.1}{\CKT lhm/lhm_final}}$$
is a homotopy equivalence via the map
$$f_{\text{lhm}} = \begin{pmatrix} 1 & 0 \\ -\h{IBR} \compose \text{z} & \T{IBR} \compose \s{OBR} \end{pmatrix}.$$
The homotopy inverse of this map is given by
$$g_{\text{lhm}} = \begin{pmatrix} 1 & 0 \\ -\h{OBR} \compose \text{z} & \T{OBR} \compose \s{IBR} \end{pmatrix}.$$

\item The $mhl$ variation.
$$\Kh{\mathfig{0.1}{\CKT mhl/mhl_initial}}
\iso \vcone{\mathfig{0.08}{\CKT mhl/I_braiding_1}}{\text{u}^{\text{above}}}{\mathfig{0.08}{\CKT mhl/cone_source_1}}
 \xrightarrow[f_{\text{mhl}}]{\htpy} \vcone{\mathfig{0.08}{\CKT mhl/I_braiding_3}}{\text{u}^{\text{below}}}{\mathfig{0.08}{\CKT mhl/cone_source_2}} \iso \Kh{\mathfig{0.1}{\CKT mhl/mhl_final}}$$
is a homotopy equivalence via the map
$$f_{\text{mhl}} = \begin{pmatrix} \T{OBL} \compose \s{IBL} & 0 \\ \text{u} \compose \h{IBL} & 1 \end{pmatrix}.$$
The homotopy inverse of this map is given by
$$g_{\text{mhl}} = \begin{pmatrix} \T{IBL} \compose \s{OBL} & 0 \\ \text{u} \compose \h{OBL} & 1 \end{pmatrix}.$$

\item The $lmh$ variation.
$$\Kh{\mathfig{0.1}{\CKT lmh/lmh_initial}}
\iso \vcone{\mathfig{0.08}{\CKT lmh/I_braiding_1}}{\text{u}^{\text{above}}}{\mathfig{0.08}{\CKT lmh/cone_source_1}}
 \xrightarrow[f_{\text{lmh}}]{\htpy} \vcone{\mathfig{0.08}{\CKT lmh/I_braiding_3}}{\text{u}^{\text{below}}}{\mathfig{0.08}{\CKT lmh/cone_source_2}} \iso \Kh{\mathfig{0.1}{\CKT lmh/lmh_final}}$$
is a homotopy equivalence via the map
$$f_{\text{lmh}} = \begin{pmatrix} \T{IBR} \compose \s{OBR} & 0 \\ \text{u} \compose \h{OBR} & 1 \end{pmatrix}.$$
The homotopy inverse of this map is given by
$$g_{\text{lmh}} = \begin{pmatrix} \T{OBR} \compose \s{IBR} & 0 \\ \text{u} \compose \h{IBR} & 1 \end{pmatrix}.$$
\end{enumerate}
\end{prop}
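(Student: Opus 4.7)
The plan is to adapt the proof strategy of Proposition \ref{prop:R3-v1} to each of the three remaining variations, making appropriate substitutions in the choice of strand-past-vertex simplification and, for the $mhl$ and $lmh$ cases, reversing the direction of the cone decomposition.

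For the $lhm$ variation, the high crossing is positive, so its cone has the smoothly-resolved object as source with zip differential, exactly as in the $hml$ case. I would therefore follow the three-step pattern of Proposition \ref{prop:R3-v1} essentially verbatim, substituting IBR and OBR for IBL and OBL: first apply Lemma \ref{lem:f-to-rf} with $r = \s{OBR}$ to simplify the initial I-braiding object; then invoke an analog of Lemma \ref{lem:IBL-OBL-equal} (with $\s{IBR}\compose \text{z} = \s{OBR}\compose \text{z}$ on appropriate domains) to exchange the cone source; and finally apply the inverse direction of Lemma \ref{lem:f-to-rf} with $i = \T{IBR}$ to reintroduce the unsimplified I-braiding on the final side. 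Composing the three $2\times 2$ matrices produces the stated $f_{\text{lhm}}$; the homotopy inverse $g_{\text{lhm}}$ arises from running the same argument with the roles of IBR and OBR swapped.

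For $mhl$ and $lmh$, the high crossing is negative, so its cone has the I-resolved object on the \emph{left} with unzip differential. Here I would use Lemma \ref{lem:f-to-fi} in place of Lemma \ref{lem:f-to-rf}, since we now need to simplify the \emph{domain} of a chain map rather than its codomain. The three-step structure is otherwise unchanged: simplify one side's I-braiding, apply an analog of Lemma \ref{lem:IBL-OBL-equal} (now with an unzip on the right rather than a zip on the left), then unsimplify on the other side. The variation $mhl$ uses IBL/OBL simplifications while $lmh$ uses IBR/OBR. The off-diagonal entries $\text{u}\compose \h{IBL}$ and $\text{u}\compose \h{OBR}$ in the resulting $f$ matrices are exactly what the $fh$ slot of Lemma \ref{lem:f-to-fi} dictates, where $h$ is the homotopy of the strand-past-vertex simplification performed in the first step; this explains why $f_{\text{mhl}}$ involves $\h{IBL}$ rather than $\h{OBL}$, and $f_{\text{lmh}}$ involves $\h{OBR}$ rather than $\h{IBR}$.

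The principal obstacle is verifying the three analogs of Lemma \ref{lem:IBL-OBL-equal}. Following the pattern of that lemma, these reduce to object-by-object foam comparisons invoking only planar isotopy---none of the closed-foam, neck-cutting, airlock, tube, or rocket relations are needed. The care required is bookkeeping: tracking orientations of the through-strand, the sink/source nature of the vertex, the ordering of crossings, and the homological and quantum grading shifts so that the two compositions produce identical foam data on every summand. Once these equalities are established, the cone-reducing lemmas mechanically assemble the explicit $f$ and $g$ formulas claimed in each case, and no further choices are needed.
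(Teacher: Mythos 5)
Your proposal is correct and follows exactly the route the paper intends: it repeats the three-step cone argument of Proposition \ref{prop:R3-v1} with the IBR/OBR (resp.\ IBL/OBL) simplifications substituted, invokes the corresponding analogs of Lemma \ref{lem:IBL-OBL-equal} proved by foam isotopy alone, and—as the paper's own remark notes—switches to Lemma \ref{lem:f-to-fi} for the $mhl$ and $lmh$ variations because the simplification now occurs at the cone source; your matrix bookkeeping (including why $\h{IBL}$ and $\h{OBR}$, rather than their partners, appear in $f_{\text{mhl}}$ and $f_{\text{lmh}}$) reproduces the stated maps.
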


\begin{remark} Notice that, for the $mhl$ and $lmh$ variations, the interesting partial homotopy equivalences occur at the source complexes in the cones, rather than the target complexes. Thus the relevant cone-reducing lemma here is Lemma \ref{lem:f-to-fi}.
\end{remark}

For the $hlm$, $mlh$, $\ccw$, and $\cw$ variations of $R3$, the high crossing resolves parallel to the low strand, rather than perpendicular to it, which makes the cone slightly more complicated. Consider, for example, the complexes in the $\cw$ move. The initial complex is
$$ \Kh{\mathfig{0.1}{\CKT cw/cw_initial}} \iso
    \cone{\mathfig{0.08}{\CKT cw/cone_source_1}}{\text{z}^{\text{above}}}{\mathfig{0.08}{\CKT cw/H_braiding_1}},$$
with final complex
$$ \Kh{\mathfig{0.1}{\CKT cw/cw_final}} \iso
    \cone{\mathfig{0.08}{\CKT cw/cone_source_3}}{\text{z}^{\text{below}}}{\mathfig{0.08}{\CKT cw/H_braiding_3}}.$$
Here neither the source nor target complexes in the cones are the same; instead, the source complexes are related by two $R2$ moves, and the target complexes by a different sequence of strand-past-vertex moves. This will ultimately make our string of homotopy equivalences longer, but the idea is essentially the same. As before, we'll give the gory details in only one of the cases.

First, however, we'll need a statement analogous to Lemma \ref{lem:IBL-OBL-equal}.

\begin{lemma}
\label{lem:hbraid-equal}
The two compositions
\begin{align*}
\xymatrix{
 \mathfig{0.08}{\CKT cw/cone_source_2} \ar[r]^{\rho_1} &
 \mathfig{0.08}{\CKT cw/cone_source_1} \ar[r]^{\text{z}} &
 \mathfig{0.08}{\CKT cw/H_braiding_1} \ar[r]^{\T{IBR}} &
 \mathfig{0.08}{\CKT cw/H_braiding_2}
}%
\intertext{and}%
\xymatrix{
 \mathfig{0.08}{\CKT cw/cone_source_2} \ar[r]^{\rho_2} &
 \mathfig{0.08}{\CKT cw/cone_source_3} \ar[r]^{\sigma} &
 \mathfig{0.08}{\CKT cw/cone_source_3r} \ar[r]^{\text{z}} &
 \mathfig{0.08}{\CKT cw/H_braiding_3} \ar[r]^{\T{OBR}} &
 \mathfig{0.08}{\CKT cw/H_braiding_2}
},
\end{align*}
are equal. Here, $\rho_1$ and $\rho_2$ are the $R2$ tuck maps on the bottom two strands and on the top two strands, respectively, and $\sigma$ is the obvious crossing-re\-ordering map.
\end{lemma}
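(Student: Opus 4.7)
The plan is to prove this in the same spirit as Lemma \ref{lem:IBL-OBL-equal}: expand both compositions summand-by-summand, and verify that at every homological height the resulting foam (with sign) on one side matches the other up to isotopy, with no closed-foam relations required. Both sides are chain maps between the two-term complex $\Kh{\mathfig{0.08}{\CKT cw/cone_source_2}}$ (in which the remaining two low-height crossings are resolved) and $\Kh{\mathfig{0.08}{\CKT cw/H_braiding_2}}$, so the check has only a few components.

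First I would record the explicit pieces. The tuck maps $\rho_1$ and $\rho_2$ are read off from the $R2$ homotopy equivalences of Figures \ref{fig:r2a} and \ref{fig:r2b} restricted to the appropriate pair of strands; they are saddle-type maps that introduce no sign. From Lemma \ref{lem:spv}, $\T{IBR}$ has components $(-\bar{r})$ in degree $-2$ and $\psmallmatrix{-\text{b} \compose \text{u} \\ 1}$ in degree $-1$, while $\T{OBR}$ has components $(-\bar{r})$ and $\psmallmatrix{\text{b} \compose \text{u} \\ -1}$ respectively. The reordering map $\sigma$ is identity on each underlying foam but sprinkles a sign in the summands where the odd crossing-permutation is applied, as dictated by the sign conventions of Appendix \ref{ssec:sign-conventions}.

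With these pieces in hand, at each summand of $\mathfig{0.08}{\CKT cw/cone_source_2}$ both compositions build the same foam: on the left, the zip for the high crossing is immediately followed by the unzip inside $\T{IBR}$ (or the half-barrel $\bar r$); on the right, after the second $R2$ tuck and the bottom-crossing reordering the analogous zip-then-unzip (or $\bar r$) appears in $\T{OBR}$. Pushing the saddle from the tuck around the newly created web is a planar isotopy of foams, not a relation. The real work, and the step I expect to be the main obstacle, is the sign bookkeeping: one must verify that the sign contributed by $\sigma$ exactly cancels the difference between the analogous components of $\T{IBR}$ and $\T{OBR}$ at every height where both are nonzero (and acts trivially in the degree $-2$ half-barrel component, where the two maps already agree). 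Once this matching is done summand-by-summand, both compositions yield the same signed foam, proving the claim.
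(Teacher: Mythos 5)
Your proposal is correct and follows essentially the same route as the paper: an object-by-object comparison using only foam isotopy, with the key bookkeeping point being that $\sigma$ acts by $-1$ precisely on the doubly I-resolved objects, which accounts for the sign differences between the components of $\T{IBR}$ and $\T{OBR}$. The paper's proof of Lemma \ref{lem:hbraid-equal} is exactly this observation, stated at a similar level of detail.
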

\begin{proof}
Again, as in the proof of Lemma \ref{lem:IBL-OBL-equal}, this is a straightforward exercise in foam isotopy. Just remember that $\sigma$ is the identity on all objects except the doubly I-resolved ones, on which it acts by $-1$.
\end{proof}

\begin{prop}[The $\cw$ version of $R3$]
\label{prop:R3-v5}
$$\Kh{\mathfig{0.1}{\CKT cw/cw_initial}}
\iso \vcone{\mathfig{0.08}{\CKT cw/cone_source_1}}{\text{z}^{\text{above}}}{\mathfig{0.08}{\CKT cw/H_braiding_1}}
 \xrightarrow[f_{\cw}]{\htpy} \vcone{\mathfig{0.08}{\CKT cw/cone_source_3r}}{\text{z}^{\text{below}}}{\mathfig{0.08}{\CKT cw/H_braiding_3}} \iso \Kh{\mathfig{0.1}{\CKT cw/cw_final}}$$
is a homotopy equivalence via the map
$$f_{\cw} = \begin{pmatrix} \sigma \compose \rho_2 \compose \rho_1^{-1} & 0 \\ \s{OBR} \compose \T{IBR} \compose \text{z} \compose h_1 & \s{OBR} \compose \T{IBR} \end{pmatrix}.$$
The homotopy inverse of this map is given by
$$g_{\cw} = \begin{pmatrix} \rho_1 \compose \rho_2^{-1} \compose \sigma & 0 \\ \s{IBR} \compose \T{OBR} \compose \text{z} \compose h_2 & \s{IBR} \compose \T{OBR} \end{pmatrix}.$$
Here, the maps $h_1$ and $h_2$ are homotopies for the $R2$ equivalences, and we won't compute them explicitly.
\end{prop}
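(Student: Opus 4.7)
The plan is to imitate the argument of Proposition \ref{prop:R3-v1}, but with additional intermediate steps: in the $\cw$ variation both the sources and the targets of the two cones differ, whereas in the $hml$ case only the sources did. I will pass through a chain of homotopic cones using Lemmas \ref{lem:f-to-rf}--\ref{lem:f-to-if}, invoking Lemma \ref{lem:hbraid-equal} at the pivotal stage (in place of Lemma \ref{lem:IBL-OBL-equal}) to rewrite the differential.

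Concretely, the chain of equivalences I have in mind is
\begin{align*}
 C(\text{z}: \text{cone\_source\_1} \to \text{H\_braiding\_1})
 &\htpy C(\text{z} \compose \rho_1: \text{cone\_source\_2} \to \text{H\_braiding\_1}) \\
 &\htpy C(\T{IBR} \compose \text{z} \compose \rho_1: \text{cone\_source\_2} \to \text{H\_braiding\_2}) \\
 &= C(\T{OBR} \compose \text{z} \compose \sigma \compose \rho_2: \text{cone\_source\_2} \to \text{H\_braiding\_2}) \\
 &\htpy C(\text{z} \compose \sigma \compose \rho_2: \text{cone\_source\_2} \to \text{H\_braiding\_3}) \\
 &\htpy C(\text{z} \compose \sigma: \text{cone\_source\_3} \to \text{H\_braiding\_3}) \\
 &\iso C(\text{z}: \text{cone\_source\_3r} \to \text{H\_braiding\_3}).
\end{align*}
The first step applies Lemma \ref{lem:f-to-fi} with the strong deformation retract $\rho_1^{-1}$ (inverse $\rho_1$, homotopy $h_1$). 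The second uses Lemma \ref{lem:f-to-rf} in reverse with the SDR $\s{IBR}$; the off-diagonal term $-\h{IBR} \compose \T{IBR} \compose \text{z} \compose \rho_1$ collapses because $\h{IBR} \compose \T{IBR} = 0$ by the SDR identity $hi = 0$ of Definition \ref{def:simple}. The equality on the middle line is exactly Lemma \ref{lem:hbraid-equal}. The fourth step is a forward application of Lemma \ref{lem:f-to-rf} with the SDR $\s{OBR}$, telescoping $\s{OBR} \compose \T{OBR}$ to the identity. The fifth is Lemma \ref{lem:f-to-fr} with the SDR $\rho_2^{-1}$, absorbing $\rho_2 \compose \rho_2^{-1}$; and the last is the crossing-reordering isomorphism $\sigma$.

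Multiplying the six resulting matrices in order reproduces $f_{\cw}$: the diagonal source entry $\sigma \compose \rho_2 \compose \rho_1^{-1}$ records the three source changes, the diagonal target entry $\s{OBR} \compose \T{IBR}$ records the two target changes, and the off-diagonal $\s{OBR} \compose \T{IBR} \compose \text{z} \compose h_1$ is the single homotopy term produced in the very first step, pushed through the later target modifications. The inverse $g_{\cw}$ is obtained by running the same chain with the roles of IBR/OBR and $\rho_1/\rho_2$ swapped throughout. The main obstacle is bookkeeping---with five homotopy equivalences to compose (a mix of forward and reverse applications of the cone-reducing lemmas), one must apply each piece of the SDR identities in Definition \ref{def:simple} at exactly the right place to cancel the would-be extra terms---but no new foam computations are needed beyond those already verified for Lemma \ref{lem:hbraid-equal}.
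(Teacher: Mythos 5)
Your argument is essentially the paper's own proof: the same chain of cone equivalences pivoting on Lemma \ref{lem:hbraid-equal}, with the source and target modifications merely performed in a different (commuting) order, and the composed matrices indeed reproduce the stated $f_{\cw}$ and $g_{\cw}$. One small citation slip: the final source change (removing $\rho_2$ from the differential) is an application of Lemma \ref{lem:f-to-fi} with the untuck $\rho_2^{-1}$ as the strong deformation retract, not Lemma \ref{lem:f-to-fr}, though the matrix $\psmallmatrix{\rho_2 & 0 \\ 0 & 1}$ it provides is the same.
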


\begin{proof}
This time we have the following composition:
\begin{align*}
\cone{\mathfig{0.08}{\CKT cw/cone_source_1}}{\; \text{z} \;}{\mathfig{0.08}{\CKT cw/H_braiding_1}}
        & \htpy \cone{\mathfig{0.08}{\CKT cw/cone_source_1}}{\mathrlap{\quad \T{IBR} \compose \text{z}} \phantom{\T{OBR} \compose \text{z} \compose \sigma \compose \rho_2}}{\mathfig{0.08}{\CKT cw/H_braiding_2}} \\
        & \htpy \cone{\mathfig{0.08}{\CKT cw/cone_source_2}}{\mathrlap{\; \; \T{IBR} \compose \text{z} \compose \rho_1} \phantom{\T{OBR} \compose \text{z} \compose \sigma \compose \rho_2}}{\mathfig{0.08}{\CKT cw/H_braiding_2}} \\
        & = \cone{\mathfig{0.08}{\CKT cw/cone_source_2}}{\T{OBR} \compose \text{z} \compose \sigma \compose \rho_2}{\mathfig{0.08}{\CKT cw/H_braiding_2}} \\
        & \htpy \cone{\mathfig{0.08}{\CKT cw/cone_source_3}}{\mathrlap{\; \; \T{OBR} \compose \text{z} \compose \sigma} \phantom{\T{OBR} \compose \text{z} \compose \sigma \compose \rho_2}}{\mathfig{0.08}{\CKT cw/H_braiding_2}} \\
        & \htpy \cone{\mathfig{0.08}{\CKT cw/cone_source_3}}{\mathrlap{\qquad \text{z} \compose \sigma} \phantom{\T{OBR} \compose \text{z} \compose \sigma \compose \rho_2}}{\mathfig{0.08}{\CKT cw/H_braiding_3}} \\
        & \htpy \cone{\mathfig{0.08}{\CKT cw/cone_source_3r}}{\mathrlap{\qquad \; \text{z}} \phantom{\T{OBR} \compose \text{z} \compose \sigma \compose \rho_2}}{\mathfig{0.08}{\CKT cw/H_braiding_3}}
\end{align*}
The homotopy equivalences on the first and fifth lines follow from Lemma \ref{lem:f-to-if}, and are given by the matrices $\begin{pmatrix} 1 & 0 \\ 0 & \T{IBR} \end{pmatrix}$ and $\begin{pmatrix} 1 & 0 \\ 0 & \s{OBR} \end{pmatrix}$. The equivalences on the second and fourth lines come from Lemma \ref{lem:f-to-fi}, and are given by the matrices $\begin{pmatrix} \rho_1^{-1} & 0 \\ \T{IBR} \compose \text{z} \compose h_1 & 1 \end{pmatrix}$ and $\begin{pmatrix} \rho_2 & 0 \\ 0 & 1 \end{pmatrix}$. Equality on the third line is just Lemma \ref{lem:hbraid-equal}, and the equivalence (in fact, isomorphism) on the last line is given by the matrix $\begin{pmatrix} \sigma & 0 \\ 0 & 1 \end{pmatrix}$. Note that without this crossing reordering our $R2$ moves would fail to be consistent in the cone.
\end{proof}

The explicit maps for the remaining three $R3$ variations are computed in much the same way, and we will omit the details.

\begin{prop}
\label{prop:R3-v6,7,8}
\begin{enumerate}
\item The $hlm$ variation.
$$\Kh{\mathfig{0.1}{\CKT hlm/hlm_initial}}
\iso \vcone{\mathfig{0.08}{\CKT hlm/cone_source_1}}{\text{z}^{\text{above}}}{\mathfig{0.08}{\CKT hlm/H_braiding_1}}
 \xrightarrow[f_{hlm}]{\htpy} \vcone{\mathfig{0.08}{\CKT hlm/cone_source_3}}{\text{z}^{\text{below}}}{\mathfig{0.08}{\CKT hlm/H_braiding_3}} \iso \Kh{\mathfig{0.1}{\CKT hlm/hlm_final}}$$
is a homotopy equivalence via the map
$$f_{hlm} = \begin{pmatrix} \sigma \compose \rho_1 \compose \rho_2^{-1} & 0 \\ \s{IBL} \compose \T{OBL} \compose \text{z} \compose h_2 & \s{IBL} \compose \T{OBL} \end{pmatrix}.$$
The homotopy inverse of this map is given by
$$g_{hlm} = \begin{pmatrix} \rho_2 \compose \rho_1^{-1} \compose \sigma & 0 \\ \s{OBL} \compose \T{IBL} \compose \text{z} \compose h_1 & \s{OBL} \compose \T{IBL} \end{pmatrix}.$$

\item The $mlh$ variation.
$$\Kh{\mathfig{0.1}{\CKT mlh/mlh_initial}}
\iso \vcone{\mathfig{0.08}{\CKT mlh/H_braiding_1}}{\text{u}^{\text{above}}}{\mathfig{0.08}{\CKT mlh/cone_source_1}}
 \xrightarrow[f_{mlh}]{\htpy} \vcone{\mathfig{0.08}{\CKT mlh/H_braiding_3}}{\text{u}^{\text{below}}}{\mathfig{0.08}{\CKT mlh/cone_source_3}} \iso \Kh{\mathfig{0.1}{\CKT mlh/mlh_final}}$$
is a homotopy equivalence via the map
$$f_{mlh} = \begin{pmatrix} \s{IBL} \compose \T{OBL} & 0 \\ -h_2 \compose \text{u} \compose \s{IBL} \compose \T{OBL} & \sigma \compose \rho_1 \compose \rho_2^{-1} \end{pmatrix}.$$
The homotopy inverse of this map is given by
$$g_{mlh} = \begin{pmatrix} \s{OBL} \compose \T{IBL} & 0 \\ -h_1 \compose \text{u} \compose \s{OBL} \compose \T{IBL} & \rho_2 \compose \rho_1^{-1} \compose \sigma \end{pmatrix}.$$

\item The $\ccw$ variation.
$$\Kh{\mathfig{0.1}{\CKT ccw/ccw_initial}}
\iso \vcone{\mathfig{0.08}{\CKT ccw/H_braiding_1}}{\text{u}^{\text{above}}}{\mathfig{0.08}{\CKT ccw/cone_source_1}}
 \xrightarrow[f_{\ccw}]{\htpy} \vcone{\mathfig{0.08}{\CKT ccw/H_braiding_3}}{\text{u}^{\text{below}}}{\mathfig{0.08}{\CKT ccw/cone_source_3}} \iso \Kh{\mathfig{0.1}{\CKT ccw/ccw_final}}$$
is a homotopy equivalence via the map
$$f_{\ccw} = \begin{pmatrix} \s{OBR} \compose \T{IBR} & 0 \\ -h_1 \compose \text{u} \compose \s{OBR} \compose \T{IBR} & \sigma \compose \rho_2 \compose \rho_1^{-1} \end{pmatrix}.$$
The homotopy inverse of this map is given by
$$g_{\ccw} = \begin{pmatrix} \s{IBR} \compose \T{OBR} & 0 \\ -h_2 \compose \text{u} \compose \s{IBR} \compose \T{OBR} & \rho_1 \compose \rho_2^{-1} \compose \sigma \end{pmatrix}.$$

\end{enumerate}
\end{prop}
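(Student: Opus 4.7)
The plan is to treat each of these three variations by imitating the chain of homotopy equivalences in the proof of Proposition \ref{prop:R3-v5}, with the specific strand-past-vertex simplifications and cone-reducing lemmas swapped according to the orientations involved. In each case the underlying CKT setup is the same: I decompose both the initial and final complexes as cones over the differential of the high crossing, reduce the ``I-braiding'' side using the appropriate $\s{\cdot}$ and $\T{\cdot}$ maps from Lemmas \ref{lem:IBL}, \ref{lem:OBL}, and \ref{lem:spv}, and reduce the other side using an $R2$-tuck equivalence together with a crossing-reordering map $\sigma$.

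For the $hlm$ variation, the geometry is the mirror of the $\cw$ case in Proposition \ref{prop:R3-v5}: the high crossing still resolves ``perpendicular to the low strand'' and is an unreduced $\text{z}$-cone, but the orientations now select IBL/OBL rather than IBR/OBR. I would first prove the analog of Lemma \ref{lem:hbraid-equal} for these two simplifications, which is a routine foam isotopy check (using that $\sigma$ acts by $-1$ only on the doubly I-resolved object). With that in place, the same five-step chain of equivalences as in Proposition \ref{prop:R3-v5} applies, using Lemma \ref{lem:f-to-if} for the first and last steps (simplifying and unsimplifying the I-braidings) and Lemma \ref{lem:f-to-fi} for the $R2$-tuck steps on the cone sources. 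Composing the five transition matrices in order produces exactly the formula for $f_{hlm}$ given in the statement, with the $\s{IBL}\compose\T{OBL}$ block on the diagonal coming from the middle simplification/unsimplification pair and the $\sigma\compose\rho_1\compose\rho_2^{-1}$ block coming from the two tucks together with the reordering.

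For the $mlh$ and $\ccw$ variations, the high crossing is a negative crossing, so its local complex is an $\text{u}$-cone with the I-resolved object as the \emph{source} rather than the target; hence the decompositions are $\vcone{\cdot}{\text{u}}{\cdot}$ with the I-braiding on the left. This swaps the roles of the cone-reducing lemmas: instead of Lemma \ref{lem:f-to-if} and Lemma \ref{lem:f-to-fi}, I would apply Lemma \ref{lem:f-to-rf} at the two simplification/unsimplification steps and Lemma \ref{lem:f-to-fr} at the two $R2$-tuck steps. The middle equality is again the obvious foam-isotopy analog of Lemma \ref{lem:hbraid-equal}, now for the relevant OBL/IBL (for $mlh$) or IBR/OBR (for $\ccw$) compositions. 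The lower-left entries in the proposition acquire the sign $-h_i$ rather than $+h_i$ precisely because the transition matrices of Lemmas \ref{lem:f-to-rf} and \ref{lem:f-to-fr} carry $-hf$ and $fh$ in opposite corners from their duals.

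The main obstacle will be bookkeeping rather than ideas: one must track the crossing-ordering conventions declared before Proposition \ref{prop:R3-v1}, the direction of each $R2$ tuck ($\rho_1$ versus $\rho_2$ and the inverse of one of them), and the sign contributed by $\sigma$ on the doubly I-resolved summand. A secondary subtlety is that the cone-reducing lemmas are not all symmetric under reversing the direction of the chain map $f$; hence each of the three variations requires picking the unique cone-reducing lemma whose source/target pattern matches the actual arrow in the cone. Once these are lined up, the composition is mechanical, and the expressions for $f_{hlm}$, $f_{mlh}$, $f_{\ccw}$ and their homotopy inverses $g_{(\cdot)}$ fall out of multiplying the five block matrices of the intermediate equivalences in the indicated order, exactly as in the proof of Proposition \ref{prop:R3-v5}.
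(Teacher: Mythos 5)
Your overall strategy is the same as the paper's: the paper omits these three computations entirely, saying only that they are carried out ``in much the same way'' as Proposition \ref{prop:R3-v5}, and records in a remark that the $hlm$ variation uses Lemmas \ref{lem:f-to-if} and \ref{lem:f-to-fi} while the $mlh$ and $\ccw$ variations require Lemmas \ref{lem:f-to-fr} and \ref{lem:f-to-rf}. Your treatment of $hlm$, and your plan to prove a foam-isotopy analogue of Lemma \ref{lem:hbraid-equal} in each case before chaining five cone-reducing equivalences, matches this exactly.

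For $mlh$ and $\ccw$, however, you have attached the two lemmas to the wrong steps. In these variations the cone's chain map is the unzip $\text{u}$ \emph{from} the I-braiding layer \emph{to} the smoothly-resolved layer, so the strand-past-vertex simplifications and unsimplifications ($\s{OBL}$, $\T{OBL}$, etc.) modify the \emph{source} of the cone and must be introduced by precomposition, i.e.\ by Lemma \ref{lem:f-to-fr}; the $R2$ tucks modify the \emph{target} and enter by postcomposition, i.e.\ by Lemma \ref{lem:f-to-rf}. As you state it (Lemma \ref{lem:f-to-rf} at the (un)simplification steps, Lemma \ref{lem:f-to-fr} at the tuck steps), the lemmas do not apply: Lemma \ref{lem:f-to-rf} needs a deformation retract out of the cone's target, whereas the $\s{}$-maps live on the source layer. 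Relatedly, your sign explanation is off: the transition matrices of Lemma \ref{lem:f-to-fr} contain no homotopy term at all, and the entries $-h_i \compose \text{u} \compose \cdots$ in $f_{mlh}$, $f_{\ccw}$ and their inverses arise from the backward map $\psmallmatrix{1 & 0 \\ -hf & i}$ of Lemma \ref{lem:f-to-rf}, used at the step where the target layer is re-tucked. With this assignment corrected, composing the transition matrices does produce the stated formulas, exactly as in the $\cw$ computation.
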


\begin{remark} The proof for the $hlm$ variation uses Lemmas \ref{lem:f-to-if} and \ref{lem:f-to-fi}, as in the $\cw$ case above, while the $\ccw$ and $mlh$ variations require Lemmas \ref{lem:f-to-fr} and \ref{lem:f-to-rf}.
\end{remark}

Having determined the $R3$ maps explicitly, we see they are various and complicated. However, to compute with movie moves we only need a small list of facts that apply to all eight $R3$ variations. The following lemmas provide this distillation, and apply to both the $R3$ moves and their inverses.

First, and briefly, we'll reintroduce some notation from \cite{ClarMW:Fix}. As we've seen, the CKT essentially separates each $R3$ ``before" and ``after" complex into two smaller complexes, which we'll call ``layers," whose diagrams differ by the resolution of a single crossing. We'll denote layers that look like (a rotated version of) either $\mathfig{0.06}{R3/CKT/O_layer1}$ or $\mathfig{0.06}{R3/CKT/O_layer2}$ by $\mathcal{O}$, since the strands involved in the crossing appear to be $\mathcal{O}$rthogonal to the uninvolved strand. In contrast, we'll denote layers that look like (a rotated version of) either $\mathfig{0.06}{R3/CKT/P_layer1}$ or $\mathfig{0.06}{R3/CKT/P_layer2}$ by $\mathcal{P}$, for ``$\mathcal{P}$arallel to the uninvolved strand." Notice that the homological ordering of the layers may be either $\OP$ or $\PO$, depending on the crossing signs and orientations for each $R3$ move:
\begin{itemize}
\item the $hml$, $lhm$, $mlh$, and $\ccw$ variations are ordered $\OP$
\item the $mhl$, $lmh$, $hlm$, and $\cw$ variations are ordered $\PO$
\end{itemize}
This allows us to decompose each $R3$ map as
$$R3_\star = R3_\star^\OO + R3_\star^\OP + R3_\star^\PO + R3_\star^\PP,$$
where $\star$ is one of the eight variations and $R3_\star^{a \To b}$ is the component from the $a$ layer to the $b$ layer. Of course, we've already performed this decomposition in the propositions above. For example, the matrix for $f_{\ccw}$ (from Proposition \ref{prop:R3-v6,7,8}, part $(3)$) can be written, using this notation, as
$$\begin{pmatrix} R3_\ccw^\OO & R3_\ccw^\OP \\ & \\ R3_\ccw^\PO & R3_\ccw^\PP \end{pmatrix}.$$

Let's also name a morphism that will arise frequently:
$$R = \mathfig{0.15}{foams/R3starlike_OO}.$$


\begin{lemma}
\label{lem:R3-OP}
If the layers of $R3_\star$ are arranged as $\OP$, then the map from the parallel layer to the orthogonal layer, $R3_\star^\PO$, is zero. Otherwise, if the layers are arranged as $\PO$, then the map $R3_\star^\PO$ is zero. (That is, the diagonal map pointing backwards in homological height is always zero.)
\end{lemma}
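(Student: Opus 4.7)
The plan is to read this off from the explicit matrices computed in Propositions~\ref{prop:R3-v1}, \ref{prop:R3-v2,3,4}, \ref{prop:R3-v5}, and \ref{prop:R3-v6,7,8}. In all eight propositions the matrix presenting $R3_\star$ has block form $\psmallmatrix{\ast & 0 \\ \ast & \ast}$, so the first step is simply to point at each of the eight matrices and observe the common zero in the upper-right entry.

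The second step is to interpret that entry in homological terms. Under the cone convention $\cone{A}{f}{B} = A^{\bullet+1} \oplus B^\bullet$, the $A$-summand sits one homological degree lower than the $B$-summand. Consequently the upper-right block of each $R3_\star$ matrix records the component from the $B$-layer of the initial cone to the $A$-layer of the final cone, i.e., from the higher-height layer to the lower. Comparing with the $\OP$ versus $\PO$ classification listed just before the lemma, this backwards block is $R3_\star^{\PO}$ for the $hml$, $lhm$, $mlh$, and $\ccw$ variations and (despite the way the statement is phrased) $R3_\star^{\OP}$ for the $mhl$, $lmh$, $hlm$, and $\cw$ variations. In every case it vanishes.

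It is worth explaining why the lower-triangular form is automatic, which makes the whole argument structural rather than accidental. Each of the four cone-reducing tools (Lemmas~\ref{lem:f-to-rf}, \ref{lem:f-to-fi}, \ref{lem:f-to-fr}, and \ref{lem:f-to-if}) is presented by an explicit lower-triangular matrix of the form $\psmallmatrix{\ast & 0 \\ \ast & \ast}$; the intermediate equality steps (Lemmas~\ref{lem:IBL-OBL-equal} and~\ref{lem:hbraid-equal}) and the crossing-reordering $\sigma$ act within a single cone-summand and so preserve this shape; and lower-triangular matrices are closed under composition. I don't anticipate any real obstacle here, and the hardest part will just be the bookkeeping of matching each of the four cone reductions to the correct variation without mislabelling rows and columns.
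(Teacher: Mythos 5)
Your proposal is correct and follows essentially the same route as the paper, which justifies this lemma simply by observation of the explicit $R3$ matrices computed in Propositions \ref{prop:R3-v1}--\ref{prop:R3-v6,7,8}; your reading of the vanishing upper-right block as the component from the higher-height layer to the lower-height layer (and hence as $R3_\star^{\OP}$ rather than $R3_\star^{\PO}$ in the $\PO$-ordered cases, matching the parenthetical rather than the literal second clause) is the intended one. The added structural observation that the cone-reducing lemmas are all lower-triangular, so triangularity is automatic under composition, is a nice bonus but not a departure from the paper's argument.
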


\begin{lemma}
\label{lem:R3-OO}
The map between the orthogonal layers, $R3_\star^\OO$, is the identity chain map when $\star = hml, lhm, mhl,$ or $lmh$. When $\star = hlm$ or $mlh$, the map $R3_\star^\OO$, restricted to the two homologically extreme objects, is $-id$. When $\star = \cw $ or $\ccw$, the restriction of $R3_\star^\OO$ to extreme objects is the appropriate rotation of the morphism $-R$.
\end{lemma}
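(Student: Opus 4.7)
The plan is to verify each of the three assertions by direct inspection of the explicit block-matrix formulas for $f_\star$ recorded in Propositions \ref{prop:R3-v1}, \ref{prop:R3-v2,3,4}, \ref{prop:R3-v5}, and \ref{prop:R3-v6,7,8}, followed (where necessary) by a foam calculation on the homologically extreme objects of the $\mathcal{O}$ layer.

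For the four immediate cases $\star \in \{hml, lhm, mhl, lmh\}$, I would simply read off the $\mathcal{O}\to\mathcal{O}$ entry of $f_\star$: this is literally the scalar $1$, appearing as the upper-left entry of the matrix for the $\OP$-ordered moves $hml$ and $lhm$, and as the lower-right entry for the $\PO$-ordered moves $mhl$ and $lmh$. Hence $R3_\star^\OO = \mathrm{id}$ holds on the whole $\mathcal{O}$ layer, with no foam computation required.

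For the remaining four variations $\star \in \{hlm, mlh, \cw, \ccw\}$, the $\mathcal{O}\to\mathcal{O}$ block of $f_\star$ is a composition of the form $s_\bullet \circ t_\bullet$, an unsimplify-then-resimplify pair built from the strand-past-vertex maps of Lemmas \ref{lem:IBL}, \ref{lem:OBL}, and \ref{lem:spv}. I would substitute the explicit formulas for the components of $s_\bullet$ and $t_\bullet$ at each of the two homologically extreme objects of the $\mathcal{O}$ layer. Because the intermediate components of these maps vanish at the extreme positions, the restriction reduces at each extreme to a single foam composite of the shape $\pm\, r\circ\bar r$. For the braidlike variations $hlm$ and $mlh$, the web carrying this composite has no H-shape outside the half-barrel region, so neck-cutting \eqref{eq:neck_cutting} together with the sheet relations collapses $r\circ\bar r$ to the identity foam on two parallel strands, producing $-\mathrm{id}$ after accounting for the overall sign. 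For the starlike variations $\cw$ and $\ccw$ the extreme web itself contains an H-shape, so no further simplification is possible and the composite is (a rotation of) the named foam $R$; the composite sign then yields $-R$.

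The principal obstacle is the careful bookkeeping of signs and orientations that vary across the eight variations. Each of the sixteen matrix blocks requires identifying which of IBL/OBL/IBR/OBR is being used and tracking the $\pm$ signs in the components listed in Lemmas \ref{lem:IBL}, \ref{lem:OBL}, and \ref{lem:spv}. One also has to verify that the tuck maps $\rho_i$, the reorder map $\sigma$, and the homotopies $h_i$ appearing in the other blocks of $f_\star$ do not contribute to the $\OO$ block. Once this is done, the foam evaluation of $r\circ\bar r$ in each of the four non-trivial cases is short, since the key relations (neck-cutting, the sheet relations, and the rocket relation \eqref{eq:rocket_relation}) act on a very local region of the foam.
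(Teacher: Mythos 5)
Your reading of the block matrices is exactly right and is, in fact, all the paper itself does for this lemma: for $\star = hml, lhm, mhl, lmh$ the $\OO$ block of $f_\star$ in Propositions \ref{prop:R3-v1} and \ref{prop:R3-v2,3,4} is literally the scalar $1$, and for $\star = hlm, mlh, \cw, \ccw$ it is the unsimplify-then-resimplify composite ($\s{IBL}\compose\T{OBL}$ or $\s{OBR}\compose\T{IBR}$), whose components at the extreme homological heights are $r$ and $-\bar{r}$, so the restriction to the extreme objects is $-(r\compose\bar{r})$; the tuck maps $\rho_i$, the reordering $\sigma$, and the homotopies $h_i$ visibly sit in the other blocks. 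Up to this point you are reproducing the intended argument.

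The soft spot is the last step, the evaluation of $r\compose\bar{r}$, which is where the entire content of the lemma lives, and there your plan is off in its specifics. The paper's point (stated just after Lemma \ref{lem:cw-mid}) is that these composites are identified with the identity foam (for $hlm$, $mlh$) or with the appropriate rotation of $R$ (for $\cw$, $\ccw$) by \emph{foam isotopy alone}; no neck cutting, sheet relations, or rocket relation are involved. Moreover, the criterion you propose (an H-shape present or absent ``outside the half-barrel region'') does not actually separate the cases, since the extreme objects in all four hard variations are I-resolved webs containing vertices; what separates them is which strand-past-vertex half-barrels are being glued along the doubly I-resolved web, i.e.\ the variation and orientation data. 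More seriously, a relation-based evaluation is precisely where a sign can silently go wrong: the analogous gluing of the same two half-barrel foams in the desquaring isomorphism of Appendix \ref{ssec:isomorphisms} evaluates, via the airlock relation (Equation \ref{eq:airlock}), to $-\mathrm{id}$, and that extra sign, combined with the explicit minus in $\Th{OBL}{2}=\begin{pmatrix}-\bar{r}\end{pmatrix}$, would yield $+\mathrm{id}$ and contradict the statement. So ``producing $-\mathrm{id}$ after accounting for the overall sign'' cannot be deferred as bookkeeping: you must draw the glued foam in each of the four hard cases and check that it is isotopic on the nose to the product foam (respectively to the rotation of $R$), so that the only sign is the one already carried by the extreme component of the unsimplifying map. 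With that isotopy check supplied, your argument closes and coincides with the paper's.
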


\begin{lemma}
\label{lem:R3-PP}
The map between the parallel layers, $R3_\star^\PP$, kills the doubly smoothly-resolved object (which resides at either the highest or lowest homological height) when $\star = hml, lhm, mhl,$ or $lmh$, and kills both extreme objects when $\star = hlm, mlh, \cw,$ or $\ccw$. Further, for each variation, in the middle homological height there is a pair of objects (one in the source complex and one in the target complex) that have the same unoriented diagram; the component of the $R3_\star^\PP$ map between these objects is $-id$ for $\star = hml, lhm, mhl$ or $lmh$, the identity for $\star = hlm$ or $mlh$, and the appropriate rotation of $R$ for $\star = \cw$ or $\ccw$. Every other entry of the $R3_\star^\PP$ map in the middle homological height is some multiple of a foam that looks locally a cup, a cap, or one of the following
\begin{align*}
\mathfig{0.1}{foams/bubble} \hspace{.5in} & \mathfig{0.1}{foams/debubble}
\end{align*}
near all circles or bigons in either the source or target.
\end{lemma}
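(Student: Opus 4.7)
The plan is to read $R3_\star^{\PP}$ off from the explicit matrices for $R3_\star$ in Propositions \ref{prop:R3-v1}, \ref{prop:R3-v2,3,4}, \ref{prop:R3-v5}, and \ref{prop:R3-v6,7,8}, and then verify the three claims one homological height at a time, splitting the eight variations into the four ``easy'' braidlike ones $\{hml, lhm, mhl, lmh\}$ and the four ``hard'' ones $\{hlm, mlh, \cw, \ccw\}$.

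For each easy variation, the $\PP$ block is a composition $t \compose s$ of an unsimplifying map with a simplifying map between two strand-past-vertex complexes, with the intermediate simplified complex (from Lemmas \ref{lem:IBL}, \ref{lem:OBL}, and \ref{lem:spv}) supported in only the two non-extreme homological heights. The composition therefore vanishes automatically at the extreme height supporting the doubly smoothly-resolved object, giving claim (1). At the middle height the composition multiplies out to a rank-one $2 \times 2$ matrix whose entries are, up to signs, $\text{b}\compose\text{u}\compose\text{z}\compose\text{d}$, $\text{b}\compose\text{u}$, $\text{z}\compose\text{d}$, and $\pm 1$. The $\pm 1$ entry furnishes the identity-up-to-sign map between the pair of web objects with matching unoriented diagrams, and the other three entries are composites of $\text{z}, \text{u}, \text{b}, \text{d}$ which restrict near any circle or bigon in the source or target to one of the four advertised local forms.

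For each hard variation, $R3_\star^{\PP}$ reads (from Propositions \ref{prop:R3-v5} and \ref{prop:R3-v6,7,8}) as $\sigma\compose\rho_2\compose\rho_1^{-1}$, a composition of two $R2$-equivalences with the crossing-reordering factor $\sigma$. Since each $R2$ map in Figures \ref{fig:r2a} and \ref{fig:r2b} factors through the one-object complex of the identity tangle at a single homological height, the composition $\rho_2\compose\rho_1^{-1}$ factors through a one-object complex. This immediately kills both extreme objects of the three-height $\PP$ layer, giving claim (2), and leaves a rank-one contribution at the middle height. A direct inspection of the $R2$ components identifies the matching-diagram entry and shows the remaining entries have the stated local form; in the starlike subcase the sign contributed by $\sigma$ on the doubly $I$-resolved object converts the otherwise-identity matching entry into the appropriate rotation of the morphism $R$.

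The main obstacle is not conceptual but bookkeeping: one has to track signs, $q$-shifts, crossing orderings, and homological heights through each of the eight compositions, and then verify by explicit foam pictures that the non-identity entries look locally like cup, cap, $\text{b}$, or $\text{d}$ near every circle or bigon. As in Lemma \ref{lem:IBL-OBL-equal}, this amounts to foam isotopy rather than invoking any of the closed-foam or neck-cutting relations, so the case-check is mechanical once the matrices of simplifying, unsimplifying, and $R2$ maps are laid out.
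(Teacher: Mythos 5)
Your overall route is the paper's own: the paper disposes of Lemmas \ref{lem:R3-OP}--\ref{lem:R3-PP} with the single remark that they ``follow easily by observation of our CKT complexes and direct calculation of the maps therein, involving only foam isotopy,'' and your reading of the $\PP$ blocks is the correct starting point for that calculation ($R3_\star^\PP = \T{}\compose\s{}$-type for $hml, lhm, mhl, lmh$, and $\sigma\compose\rho_i\compose\rho_j^{-1}$ for $hlm, mlh, \cw, \ccw$, exactly as in the displayed identification of $f_\ccw$ with $\psmallmatrix{R3^\OO & R3^\OP \\ R3^\PO & R3^\PP}$). The vanishing arguments are also sound: for the easy variations the simplifying map $\s{}$ is zero at the homological height of the doubly smoothly-resolved object (though your phrase ``supported in only the two non-extreme homological heights'' is off --- the simplified strand-past-vertex complex occupies the middle height and \emph{one} extreme; what matters is only that it misses the extreme carrying the doubly smooth web), and for the hard variations the factorization of $\rho_2\compose\rho_1^{-1}$ through the one-object crossingless complex kills both extreme (mixed-resolution) objects. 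The middle-height matrix $\psmallmatrix{-\text{b}\text{u}\text{z}\text{d} & \text{b}\text{u} \\ \text{z}\text{d} & -1}$ for the easy cases is exactly what the stated $\s{}$ and $\T{}$ components produce.

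The genuine flaw is in your starlike clause: the claim that ``the sign contributed by $\sigma$ on the doubly $I$-resolved object converts the otherwise-identity matching entry into the appropriate rotation of $R$'' cannot be right. First, $\sigma$ is only ever $\pm 1$ on each web (it is $-1$ precisely on doubly I-resolved objects and $+1$ elsewhere), so it can never turn an identity foam into $R$, which is a genuinely non-identity cobordism --- this is visible later in the paper, where compositions such as $(-r)\compose(-R)\compose\bar r$ in MM6 need the airlock relation to reduce to the identity, something the identity foam would never require. Second, the reason the matching entry is $R$ rather than $\pm 1$ in the $\cw$ and $\ccw$ cases is that the two $R2$ moves relating the source and target $\mathcal{P}$ layers are the \emph{antiparallel} moves of Figure \ref{fig:r2b}: their components at the relevant middle-height objects are tuck/untuck half-barrel-type foams, not identity sheets, so the composition $\rho_2\compose\rho_1^{-1}$ restricted to the matching pair is already a nontrivial foam (a rotation of $R$) before $\sigma$ is even considered; in the braidlike $hlm$, $mlh$ cases the $R2$ moves are parallel and their smooth-smooth components are identities, which is why those variations give $\pm\mathrm{id}$. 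So your plan survives, but only because its last step defers to ``direct inspection of the $R2$ components'': that inspection would contradict, rather than confirm, the mechanism you propose for the starlike matching entry, and the writeup should derive $R$ from the $R2b$ tuck/untuck foams (with $\sigma$ contributing at most a sign) instead.
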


These lemmas follow easily by observation of our CKT complexes and direct calculation of the maps therein, involving only foam isotopy. It turns out we will need one more (rather obscure) piece of data for movie move 6, which is again easily computed: it concerns the $\OO$ map for the $\cw$ $R3$ variation.

\begin{lemma}
\label{lem:cw-mid}
The $R3_\cw^\OO$ map acts on the middle homological height objects in the following way:
$$R3_\cw^\OO: \begin{pmatrix} \mathfig{0.1}{R3/cw_mid/cw_mid_bigonup} \\ \mathfig{0.1}{R3/cw_mid/cw_mid_hex} \end{pmatrix} \xrightarrow{\begin{pmatrix} -T \; \; -1 \\ \phantom{-}0 \; \; -T' \end{pmatrix}} \begin{pmatrix} \mathfig{0.1}{R3/cw_mid/cw_mid_hex} \\ \mathfig{0.1}{R3/cw_mid/cw_mid_bigondown} \end{pmatrix}$$
where
$$T = \ \mathfig{.2}{foams/R3starlike_OOmid},$$
and $T'$ is the appropriate rotation/reflection.
\end{lemma}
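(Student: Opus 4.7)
The plan is to extract the upper-left block of the matrix $f_{\cw}$ from Proposition \ref{prop:R3-v5}, namely $R3_{\cw}^{\OO} = \sigma \compose \rho_2 \compose \rho_1^{-1}$, and evaluate it directly on the two objects of the orthogonal layer at middle homological height. Since nothing in this lemma uses a cone-reducing argument that was not already proven, the proof is a computation in $\Cob{3}$ of four matrix entries, and no foam relations should be required beyond isotopy and the sign from $\sigma$.

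First I would identify which objects on each side live in the orthogonal layer at middle height. On the source side, these are the bigon-up web and the hexagonal web appearing in the statement; on the target side, they are the hexagonal web and the bigon-down web. Because none of these four objects is doubly I-resolved, the reordering map $\sigma$ acts on each of them as $+1$, so the entire computation reduces to evaluating $\rho_2 \compose \rho_1^{-1}$ on the two source objects and reading off the coefficients in front of each target object. The two $R2$ maps come directly from Figures \ref{fig:r2a} and \ref{fig:r2b} (with the overall sign convention for the antiparallel case already fixed in Section \ref{sec:reidemeister_maps}), so each entry of the $2\times 2$ block is a single, explicit composition of saddles.

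Next I would carry out the four evaluations. Two of the four compositions reduce, after isotopy, to the local starlike foam $R$ from the preamble to Lemma \ref{lem:R3-OP}; specifically, I expect the diagonal entries to be the foam $T$ from the statement and its reflected cousin $T'$, with the overall minus signs coming from the signs in $\rho_1^{-1}$ and in the chosen normalization of the antiparallel $R2$ map. The off-diagonal entry from the hexagonal web to the bigon-down web I expect to be a plain saddle that simplifies (up to isotopy) to $-\Id$. The remaining off-diagonal entry, from the bigon-up web to the hexagonal web, I expect to vanish because the $\rho_2$ saddle cannot reach the hexagonal target from the $\rho_1^{-1}$ image of the bigon-up object: the composition passes through a web whose only continuation in $\rho_2$ sends it away from the hexagonal summand. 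This yields the matrix $\bigl(\begin{smallmatrix} -T & -1 \\ 0 & -T' \end{smallmatrix}\bigr)$ stated in the lemma.

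The main obstacle is bookkeeping rather than mathematical subtlety: keeping track of which resolution of each of the two remaining crossings contributes to which entry of the $2\times 2$ block, and being careful about signs inherited from $\rho_1^{-1}$, from the renormalized antiparallel $R2$ map, and from the order in which the two saddles compose. Once a consistent basis for the middle-height orthogonal summands is fixed, the four entries drop out by foam isotopy alone.
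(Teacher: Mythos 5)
You have extracted the wrong block of $f_{\cw}$. For the $\cw$ variation the layers are ordered $\PO$: in the cone of Proposition \ref{prop:R3-v5} the first summand is the cone-source layer, in which the high crossing is smoothly resolved with arcs parallel to the low strand, and the second summand is the $H$-braiding layer containing the $H$-web of the I-resolved high crossing; it is the latter that is the $\mathcal{O}$ layer. Hence the upper-left entry $\sigma \compose \rho_2 \compose \rho_1^{-1}$ of $f_{\cw}$ is $R3_\cw^\PP$, not $R3_\cw^\OO$; the $\OO$ component is the lower-right entry $\s{OBR} \compose \T{IBR}$. You can check this against the paper's own worked example, where $f_{\ccw}$ (ordered $\OP$) is displayed with $R3_\ccw^\OO = \s{OBR}\compose\T{IBR}$ and $R3_\ccw^\PP = \sigma\compose\rho_2\compose\rho_1^{-1}$, and against Lemmas \ref{lem:R3-OO} and \ref{lem:R3-PP}: on the homologically extreme objects $\s{OBR}\compose\T{IBR}$ gives $r\compose(-\bar{r}) = -R$, exactly as $R3_\cw^\OO$ must, whereas $\rho_2\compose\rho_1^{-1}$ kills the extreme objects, as $R3_\cw^\PP$ must. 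Moreover, the bigon and hexagon webs appearing in Lemma \ref{lem:cw-mid} all contain the $H$-web of the I-resolved high crossing, so they live in the $\mathcal{O}$ layer, on which $\rho_1$, $\rho_2$ and $\sigma$ do not act at all; the four compositions you propose to evaluate are therefore entries of a different map (one whose middle-height behaviour is already recorded in Lemma \ref{lem:R3-PP}), not of the map in the statement. Your sign bookkeeping also inherits this error: $\sigma$ never enters the $\OO$ block, and the relevant objects do have two I-resolved crossings (the high one plus one of the remaining two), so the blanket claim that no object is doubly I-resolved is not a safe shortcut.

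The computation the paper intends is instead the following. Write $R3_\cw^\OO = \s{OBR}\compose\T{IBR}$, where $\T{IBR}$ is applied locally near one vertex of the $H$-web (the other crossing of the low strand being a spectator) and $\s{OBR}$ is applied locally near the other vertex. At middle homological height the entries are assembled from the components $\Th{IBR}{-1} = \psmallmatrix{-\text{b}\compose\text{u} \\ 1}$, $\Th{IBR}{-2} = \psmallmatrix{-\bar{r}}$ of Lemma \ref{lem:spv} and $\sh{OBR}{-1} = \psmallmatrix{\text{z}\compose\text{d} & -1}$, $\sh{OBR}{-2} = \psmallmatrix{r}$, tensored with the identity on the spectator crossing using the sign conventions of Appendix \ref{ssec:sign-conventions}; foam isotopy then produces the matrix $\psmallmatrix{-T & -1 \\ \phantom{-}0 & -T'}$ of the statement, with $T$ the starlike foam built from these pieces. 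So while your expectation that only foam isotopy (plus careful signs) is needed matches the paper's intent, the map you would be computing is $R3_\cw^\PP$, and the proposed proof does not establish Lemma \ref{lem:cw-mid}.
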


For our movie move calculations, it will also be convenient to have the analogous lemmas when we determine the $\mathcal{O}$ and $\mathcal{P}$ layers by resolving the lowest crossing, rather than the highest. The CKT works just as well in this context, this time using the four versions of the strand-past-vertex move we haven't seen so far (computed in Lemma \ref{lem:spv}). The maps look very similar to the ones we've worked out above; this time, however, the $hml, lmh, hlm,$ and $mlh$ variations will more closely resemble the CKT from Proposition \ref{prop:R3-v1}, while the $mhl, lhm, \cw,$ and $\ccw$ variations will take after Proposition \ref{prop:R3-v5}. It's also worth noting that a different set of crossing ordering conventions will become much more convenient here. For variations $hml, lmh, hlm,$ and $mlh$, we'll use the following ordering: in the initial tangle the crossings will be ordered `middle', `high', `low', while in the final tangle they will be ordered `high', `middle', `low'. For variations $mhl, lhm, \cw,$ and $\ccw$, we'll instead use the inverse ordering: the crossings of the initial tangle will be ordered `high', `middle', `low', and of the final tangle, `middle', `high', `low'. We'll demonstrate two of these $R3$ homotopy equivalences below (denoting them with a bar), and leave the rest as an exercise to the reader.


\begin{prop}
\label{prop:R3L-v1,2}
\begin{enumerate}

\item The $hlm$ variation.
$$\Kh{\mathfig{0.1}{\CKT hlm_L/hlm_L_initial}}
\iso \vcone{\mathfig{0.08}{\CKT hlm_L/I_braiding_1}}{\text{u}^{\text{above}}}{\mathfig{0.08}{\CKT hlm_L/cone_source_1}}
 \xrightarrow[\overline{f_{\text{hlm}}}]{\htpy} \vcone{\mathfig{0.08}{\CKT hlm_L/I_braiding_3}}{\text{u}^{\text{below}}}{\mathfig{0.08}{\CKT hlm_L/cone_source_2}} \iso \Kh{\mathfig{0.1}{\CKT hlm_L/hlm_L_final}}$$
is a homotopy equivalence via the map
$$\overline{f_{\text{hlm}}} = \begin{pmatrix} \T{OAR} \compose \s{IAR} & 0 \\ \text{u} \compose \h{IAR} & 1 \end{pmatrix}.$$
The homotopy inverse of this map is given by
$$\overline{g_{\text{hlm}}} = \begin{pmatrix} \T{IAR} \compose \s{OAR} & 0 \\ \text{u} \compose \h{OAR} & 1 \end{pmatrix}.$$

\item The $lhm$ variation.
$$\Kh{\mathfig{0.1}{\CKT lhm_L/lhm_L_initial}}
\iso \vcone{\mathfig{0.08}{\CKT lhm_L/H_braiding_1}}{\text{u}^{\text{above}}}{\mathfig{0.08}{\CKT lhm_L/cone_source_1}}
 \xrightarrow[\overline{f_{lhm}}]{\htpy} \vcone{\mathfig{0.08}{\CKT lhm_L/H_braiding_3}}{\text{u}^{\text{below}}}{\mathfig{0.08}{\CKT lhm_L/cone_source_3}} \iso \Kh{\mathfig{0.1}{\CKT lhm_L/lhm_L_final}}$$
is a homotopy equivalence via the map
$$\overline{f_{lhm}} = \begin{pmatrix} \s{IAR} \compose \T{OAR} & 0 \\ -h_1 \compose \text{u} \compose \s{IAR} \compose \T{OAR} & \rho_2 \compose \rho_1^{-1} \compose \sigma \end{pmatrix}.$$
The homotopy inverse of this map is given by
$$\overline{g_{lhm}} = \begin{pmatrix} \s{OAR} \compose \T{IAR} & 0 \\ -h_2 \compose \text{u} \compose \s{OAR} \compose \T{IAR} & \sigma \compose \rho_1 \compose \rho_2^{-1} \end{pmatrix}.$$

\end{enumerate}
\end{prop}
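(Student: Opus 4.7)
The plan is to mirror the proofs of Propositions \ref{prop:R3-v1} through \ref{prop:R3-v6,7,8} almost verbatim, with one key substitution: because the CKT is now performed by resolving the lowest crossing rather than the highest, the two remaining crossings sit \emph{above} the trivalent vertex produced by the I-resolution rather than below it. Consequently the strand-past-vertex simplifications that appear in the chain of cone-reducing moves are the ``A'' variations IAR and OAR from Lemma \ref{lem:spv}, in place of the ``B'' variations IBL, OBL, IBR, OBR used before. The new crossing-ordering convention stated just before the proposition is what makes the signs in $\sh{IAR}{\bullet}$ and $\sh{OAR}{\bullet}$ line up correctly with the cone differentials; this is the reason the convention must be altered.

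For part (1), the $hlm$ variation, the initial and final complexes decompose as cones over the unzip differential at the low crossing, with parallel ``I-resolved'' source and a strand-past-vertex target of IAR (respectively OAR) type. Since the two cone sources are literally the same diagram, the situation is structurally identical to the $mhl$ case (Proposition \ref{prop:R3-v2,3,4}, part (2)), and the proof is a three-step chain using Lemma \ref{lem:f-to-fi} on the outside steps and, in the middle step, an elementary foam-isotopy lemma asserting
\[
\sh{IAR}{\bullet}\compose\text{u}^{\text{above}} \;=\; \sh{OAR}{\bullet}\compose\text{u}^{\text{below}}
\]
(the analog of Lemma \ref{lem:IBL-OBL-equal}, requiring no foam relations). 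Composing the matrices prescribed by Lemma \ref{lem:f-to-fi} at each step yields precisely $\overline{f_{\text{hlm}}}$, and running the chain backward (with $\T{IAR}$ and $\s{OAR}$ in the corresponding positions) gives $\overline{g_{\text{hlm}}}$.

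For part (2), the $lhm$ variation, the two cone sources are \emph{not} identical: the parallel layers in the initial and final complexes differ by two $R2$ tucks and a crossing reordering, exactly as in the $\ccw$ case of Proposition \ref{prop:R3-v6,7,8}. The proof therefore follows the five-step template of Proposition \ref{prop:R3-v5}, alternately applying Lemmas \ref{lem:f-to-rf} and \ref{lem:f-to-fr} to insert $\T{OAR}\compose\s{IAR}$ into the cone target, then applying the $R2$ equivalences (with homotopies $h_1, h_2$) to match up the two sources, and finally absorbing the crossing-reorder sign $\sigma$ via a final isomorphism of cones. Along the way one proves the analog of Lemma \ref{lem:hbraid-equal} showing that $\T{OAR}\compose\text{u}\compose\rho_1$ and $\T{IAR}\compose\text{u}\compose\sigma\compose\rho_2$ agree; again this is pure foam isotopy modulo the $-1$ that $\sigma$ contributes on doubly I-resolved objects.

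The hard part is purely bookkeeping rather than conceptual: one must track the cone matrices through five compositions and verify that the products collapse to the stated $\overline{f_{lhm}}$ and $\overline{g_{lhm}}$. The only places a genuine calculation can fail are (i) the sign of $\sigma$ on the doubly I-resolved object, and (ii) consistency of the new crossing-ordering convention with the signs appearing in the cone differentials of Lemma \ref{lem:spv}. Both were in effect fixed by our choice of convention, so verification reduces to drawing the relevant foams and observing that no foam relation is needed to conclude equality. This is why we feel comfortable omitting the remaining six variations as an exercise: each proof is a mechanical adaptation of one of Propositions \ref{prop:R3-v1}, \ref{prop:R3-v2,3,4}, \ref{prop:R3-v5}, or \ref{prop:R3-v6,7,8}, with the IAR/OAR pair substituted for the appropriate ``B'' pair.
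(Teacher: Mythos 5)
Your overall strategy is the one the paper itself intends: it gives no separate proof of Proposition \ref{prop:R3L-v1,2}, saying only that the CKT goes through with the IAR/OAR strand-past-vertex lemmas and the altered crossing-ordering convention, so re-running the arguments of Propositions \ref{prop:R3-v1}, \ref{prop:R3-v2,3,4}, \ref{prop:R3-v5} and \ref{prop:R3-v6,7,8} with those substitutions is exactly right, and your identification of the relevant cone-reducing lemmas and of the need for analogues of Lemmas \ref{lem:IBL-OBL-equal} and \ref{lem:hbraid-equal} matches the paper's plan.

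However, both of the ``middle equality'' lemmas you display are wrong as stated, and these are the only nontrivial steps beyond bookkeeping. In part (1) the cone differential $\text{u}$ and the simplifying map $\s{IAR}$ have the same source (the I-resolved layer), so $\s{IAR}\compose\text{u}^{\text{above}}$ does not even typecheck; since Lemma \ref{lem:f-to-fi} replaces a cone $C(f)$ by $C(f\compose i)$, the equality actually needed is $\text{u}^{\text{above}}\compose\T{IAR}=\text{u}^{\text{below}}\compose\T{OAR}$, i.e.\ it involves the unsimplifying maps $t$, not the maps $s$; this is what makes the two middle cones literally coincide and makes the matrix products collapse to $\overline{f_{\text{hlm}}}$ and $\overline{g_{\text{hlm}}}$. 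In part (2) you have the cone the wrong way round: here the layers related by the sequence of strand-past-vertex moves are the cone \emph{sources} and the layers related by the two $R2$ tucks are the cone \emph{targets} (the opposite of the $\cw$ case), so the maps $\T{OAR}$, $\s{IAR}$ are inserted at the source and $\rho_1,\rho_2,\sigma$ at the target; consequently your proposed analogue $\T{OAR}\compose\text{u}\compose\rho_1=\T{IAR}\compose\text{u}\compose\sigma\compose\rho_2$ is again ill-typed ($\text{u}$ is not defined on the $R2$-side layer), and the correct analogue of Lemma \ref{lem:hbraid-equal} is an equality of compositions from the common middle braiding layer to the common untucked layer, with the strand-past-vertex maps preceding $\text{u}$ and the $R2$ data following it. With those statements corrected (they are still pure foam isotopy together with the $-1$ that $\sigma$ contributes on doubly I-resolved objects), your plan does produce the stated maps.
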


The following lemmas are analogous to Lemmas \ref{lem:R3-OP}, \ref{lem:R3-OO}, and \ref{lem:R3-PP}: they give the summary information we need about the $R3$ maps (and their inverses) obtained by resolving the \emph{lowest} crossing, which we'll denote $\overline{R3_\star}$.

\begin{lemma}
\label{lem:R3L-OP}
If the layers of $R3_\star$ are arranged as $\OP$, then the map from the parallel layer to the orthogonal layer, $\low{\star}{\PO}$, is zero. Otherwise, if the layers are arranged as $\PO$, then the map $\low{\star}{\PO}$ is zero. (That is, the diagonal map pointing backwards in homological height is always zero.)
\end{lemma}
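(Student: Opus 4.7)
The plan is to follow the template of Lemma \ref{lem:R3-OP}: inspect the explicit $2\times 2$ block-matrix representation of each $\overline{R3_\star}$ (and its inverse) produced by the lowest-crossing CKT, and observe that the upper-right block vanishes. Because the cone source $A$ sits one homological degree below the cone target $B$ in $C(f) = A^{\bullet+1} \oplus B^{\bullet}$, this upper-right block is precisely the component that drops one level in homological height---the ``diagonal map pointing backwards'' described in the parenthetical of the lemma.

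The reason this block always vanishes is structural, and identical in spirit to the $R3$ case. Each of the four cone-reducing Lemmas \ref{lem:f-to-rf}--\ref{lem:f-to-if} provides a chain map between cones whose matrix is upper-triangular: the four maps are $\psmallmatrix{1 & 0 \\ 0 & r}$, $\psmallmatrix{r & 0 \\ fh & 1}$, $\psmallmatrix{i & 0 \\ 0 & 1}$, and $\psmallmatrix{1 & 0 \\ 0 & i}$, respectively. Every homotopy equivalence used to assemble $\overline{R3_\star}$ in the CKT is a composition of such maps, together with the intermediate equalities provided by the lowest-crossing analogues of Lemmas \ref{lem:IBL-OBL-equal} and \ref{lem:hbraid-equal}, which are themselves block-diagonal and hence upper-triangular. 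Since products of upper-triangular matrices are upper-triangular, $\overline{R3_\star}$ has zero upper-right block.

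To finish, I would write out $\overline{f_\star}$ and $\overline{g_\star}$ in $2\times 2$ block form for all eight variations, extending the templates of Proposition \ref{prop:R3L-v1,2}: the variations $hml, lmh, mlh$ follow the $hlm$ template of part (1), while $mhl, \cw, \ccw$ follow the $lhm$ template of part (2). Direct inspection then confirms that the upper-right entry is zero in every case, which is exactly the claim. There is no real obstacle here; the argument is purely structural, reflecting the one-sided nature of the cone-reducing lemmas.
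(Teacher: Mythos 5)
Your proposal is correct and matches the paper's route: the paper likewise establishes this lemma by direct inspection of the explicit lowest-crossing CKT equivalences (two of which are written out in Proposition \ref{prop:R3L-v1,2}, the rest assembled identically from Lemmas \ref{lem:f-to-rf}--\ref{lem:f-to-if} and the block-diagonal reordering/equality steps), all of which visibly have vanishing upper-right block, i.e.\ vanishing backwards component. Your structural remark that this persists under composition is a fair articulation of why the inspection always succeeds; just note that matrices with zero upper-right entry are \emph{lower}-triangular, not upper-triangular as you wrote.
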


\begin{lemma}
\label{lem:R3L-OO}
The map between the orthogonal layers, $\low{\star}{\OO}$, is the identity chain map when $\star = hml, lmh, hlm,$ or $mlh$. When $\star = mhl$ or $lhm$, the map $\low{\star}{\OO}$, restricted to the two homologically extreme objects, is $-id$. When $\star = \cw $ or $\ccw$, the restriction of $\low{\star}{\OO}$ to extreme objects is the appropriate rotation of the morphism $-R$
\end{lemma}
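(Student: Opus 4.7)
The plan is to follow the same template as Lemma \ref{lem:R3-OO}, now applied to the $\overline{R3_\star}$ maps obtained by resolving the lowest crossing. Proposition \ref{prop:R3L-v1,2} and its straightforward completion to the other six variations yield block-matrix formulas for $\overline{f_\star}$ that are structurally identical to those in Propositions \ref{prop:R3-v1}, \ref{prop:R3-v2,3,4}, \ref{prop:R3-v5}, and \ref{prop:R3-v6,7,8}, with $\s{IBL}, \s{OBL}, \s{IBR}, \s{OBR}$ and their inverses replaced throughout by $\s{IAR}, \s{OAR}, \s{IAL}, \s{OAL}$. Consequently each $\low{\star}{\OO}$ block reads as a concrete composition of a simplifying map $\s{}$ with an unsimplifying map $\T{}$ (for the braidlike variations), possibly composed with $R2$ tuck maps $\rho_i$ and the reordering $\sigma$ (for $\cw$ and $\ccw$). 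Since the orthogonal layer of each CKT decomposition consists of only two objects, it suffices to verify the restriction to each extremal homological height.

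First I would handle the six braidlike variations. At either extremal height the only nonzero entries of the relevant $\s{}$ and $\T{}$ factors are the half-barrel foams $r$ and $-\bar{r}$ from Figure \ref{fig:dbrr}. Their composition $\bar{r} \compose r$ glues into a trivial cylinder by elementary foam isotopy, so it is the identity cobordism. Tracking the explicit $\pm 1$ signs inherited from each simplification formula in Lemma \ref{lem:spv} (respecting the correspondence IBL $\leftrightarrow$ IAR, OBL $\leftrightarrow$ OAR, IBR $\leftrightarrow$ IAL, OBR $\leftrightarrow$ OAL) yields $+\Id$ for $\star \in \{hml, lmh, hlm, mlh\}$ and $-\Id$ for $\star \in \{mhl, lhm\}$, as claimed.

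Second, for the starlike variations $\cw$ and $\ccw$, the $\OO$ block carries the additional factor $\sigma \compose \rho_i \compose \rho_j^{-1}$ from the two $R2$ tucks together with the crossing reordering. Restricted to either extreme object, the tuck maps contribute saddle-cup foams, and $\sigma$ acts by $-1$ on the doubly I-resolved factor (exactly as in the proof of Lemma \ref{lem:hbraid-equal}). The composite assembles, up to foam isotopy, into precisely the appropriate rotation of the morphism $-R$.

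The principal obstacle is bookkeeping rather than any genuinely new idea: each of the eight variations, with its two extreme heights and orientation-dependent signs, requires its own inspection, and the correspondence with Lemma \ref{lem:R3-OO} must be drawn carefully to identify which sign pattern applies. Crucially, no foam relation beyond isotopy is ever invoked, so the entire argument reduces to an explicit case-by-case verification in the same spirit as Lemmas \ref{lem:R3-OP} and \ref{lem:R3-OO}.
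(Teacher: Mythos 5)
Your overall strategy is the paper's: obtain the barred maps from Proposition \ref{prop:R3L-v1,2} and its remaining six analogues (built from the IAR/OAR/IAL/OAL cases of Lemma \ref{lem:spv}), and then read off the $\mathcal{O}\To\mathcal{O}$ blocks by direct inspection using only foam isotopy. However, the execution misidentifies which matrix entries are the $\low{\star}{\OO}$ components, and this is a genuine gap. For $\star = hml, lmh, hlm, mlh$ (the variations patterned on Proposition \ref{prop:R3-v1}), the orthogonal layer is the smoothly-resolved layer, which is literally the \emph{same} diagram on both sides of the move; the corresponding block in $\overline{f_\star}$ is the literal entry $1$ (see the bottom-right entry of $\overline{f_{\text{hlm}}}$ in Proposition \ref{prop:R3L-v1,2}), not a composition of $\s{}$ and $\T{}$ maps. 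That is the only way the lemma can assert ``identity \emph{chain} map'': each layer has four objects over three homological heights, and a composition such as $\T{OAR}\compose\s{IAR}$ is only chain homotopic to the identity at the middle height, so your proposed check ``at each extremal homological height'' could not establish this part of the statement; moreover that composition is in fact the $\PP$ block, whose behavior is the content of Lemma \ref{lem:R3L-PP} (it kills the doubly smoothly-resolved object precisely because $\sh{IAR}{0}=0$).

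Symmetrically, your treatment of $\cw$ and $\ccw$ attributes the $-R$ to the factor $\sigma\compose\rho_i\compose\rho_j^{-1}$ coming from the $R2$ tucks and the reordering, but that factor is the $\PP$ block, not the $\OO$ block: compare the paper's explicit identification of $f_{\ccw}=\psmallmatrix{\s{OBR}\compose\T{IBR} & 0\\ \ast & \sigma\compose\rho_2\compose\rho_1^{-1}}$ with $\psmallmatrix{R3_\ccw^{\OO} & R3_\ccw^{\OP}\\ R3_\ccw^{\PO} & R3_\ccw^{\PP}}$, which carries over to the barred starlike cases. The $\OO$ block there (and for barred $mhl$, $lhm$, the only braidlike cases where an $\s{}\compose\T{}$ composition appears in the $\OO$ slot) is the $\s{}\compose\T{}$ composition, whose restriction to the extreme objects is the half-barrel composite $r\compose(-\bar r)$, isotopic to $-id$ in the braidlike geometry and to the rotation of $-R$ in the starlike geometry. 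So your half-barrel computation is the right mechanism, but for the wrong set of variations, and your tuck-map computation addresses a block the lemma does not speak about. The reported sign pattern happens to match the statement, but the verification as organized does not prove it; you need to redo the bookkeeping with the blocks correctly matched to layers (smooth layer $=\mathcal{O}$ for $hml,lmh,hlm,mlh$; I-resolved layer $=\mathcal{O}$ for $mhl,lhm,\cw,\ccw$).
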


\begin{lemma}
\label{lem:R3L-PP}
The map between the parallel layers, $\low{\star}{\PP}$ kills the doubly smoothly-resolved object (which resides at either the highest or lowest homological height) when $\star = hml, lmh, hlm,$ or $mlh$, and kills both extreme objects when $\star = mhl, lhm, \cw,$ or $\ccw$. Further, for each variation, in the middle homological height there is a pair of objects (one in the source complex and one in the target complex) that have the same unoriented diagram; the component of the $\low{\star}{\PP}$ map between these objects is $-id$ for $\star = hml, lmh, hlm$ or $mlh$, the identity for $\star = mhl$ or $lhm$, and the appropriate rotation of the morphism $R$ for $\star = \cw$ or $\ccw$. Every other entry of the $\low{\star}{\PP}$ map in the middle homological height is some multiple of a foam that looks locally a cup, a cap, or one of the following
\begin{align*}
\mathfig{0.1}{foams/bubble} \hspace{.5in} & \mathfig{0.1}{foams/debubble}
\end{align*}
near all circles or bigons in either the source or target.
\end{lemma}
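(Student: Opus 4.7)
The plan is to proceed exactly as with Lemmas \ref{lem:R3-OP}, \ref{lem:R3-OO}, and \ref{lem:R3-PP}: for each of the eight variations, extract the explicit $\PP$-block of $\overline{f_\star}$ either directly from Proposition \ref{prop:R3L-v1,2} (for $hlm$ and $lhm$) or from the analogous formulas for the other six variations, which are obtained by running the same CKT argument after resolving the low crossing and substituting the $IAR$/$OAR$/$IAL$/$OAL$ simplifications of Lemma \ref{lem:spv} in place of the $IBL$/$OBL$ ones. For each non-starlike variation, the $\PP$-block has the form $\T{?} \compose \s{?}$ (with $? \in \{IAR, OAR, IAL, OAL\}$), and all three claims are then a direct degree-by-degree matrix computation.

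The vanishing at the extreme homological height follows from the observation (recorded in the remark after Lemma \ref{lem:spv}) that the extreme component of the $s$ map beginning at the doubly smoothly-resolved object is zero: so $\T{?}\compose\s{?}$ vanishes there. At the middle homological height, one multiplies out the $2\times 2$ block $\T{?}\compose\s{?}$ using the formulas $\sh{?}{\pm 1} = (\pm\text{z}\compose\text{d},\ \mp 1)$ and $\Th{?}{\pm 1} = (\pm\text{b}\compose\text{u},\ \mp 1)^{T}$. The entry produced by the $\mp 1$ of $\s{?}$ with the $\mp 1$ of $\T{?}$ is the $\pm\text{id}$ component between the pair of middle-height objects that share an unoriented diagram, with the sign coming out exactly as claimed. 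The remaining three entries are $\text{b}\compose\text{u}\compose\text{z}\compose\text{d}$, $\text{b}\compose\text{u}$, and $\text{z}\compose\text{d}$ (up to signs); each of these locally reduces, near the bigons and circles of the respective source/target webs, to a cup, cap, bubble, or debubble, which is precisely the second half of the claim.

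For the starlike variations $\cw$ and $\ccw$, the $\PP$-block additionally contains the $R2$-tuck maps $\rho_1,\rho_2$, the crossing-reordering $\sigma$, and the $R2$-homotopies $h_1, h_2$, exactly as in the $\cw$/$\ccw$ formulas of Proposition \ref{prop:R3-v5} and \ref{prop:R3-v6,7,8} (with the analogous low-crossing-resolved formulas derived via the CKT). Both extreme objects are killed because the $\s{?}\compose\T{?}$ factor still annihilates the doubly smoothly-resolved object while the $\sigma\compose\rho_i\compose\rho_j^{-1}$ factor behaves as claimed on the doubly I-resolved object. The middle-height same-diagram entry is obtained by combining the $\pm\text{id}$ coming from $\s{?}\compose\T{?}$ with the effect of $\rho_i\rho_j^{-1}\sigma$ on the relevant object, which yields precisely the appropriate rotation of the morphism $R$, just as in Lemma \ref{lem:R3-PP}.

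The main obstacle here is not any conceptual difficulty — no foam relations beyond isotopy are invoked — but rather careful bookkeeping: one must respect the inverse crossing-ordering convention adopted (for the $mhl, lhm, \cw, \ccw$ variations when resolving the low crossing), track the reflection/rotation relating the $A$-type strand-past-vertex moves to the $B$-type ones, and ensure that the orientation-dependent signs in the $\sh{?}{}$ and $\Th{?}{}$ formulas are carried correctly through the composition. With those conventions fixed, each of the eight cases is a short pictorial verification of the form already illustrated for Lemma \ref{lem:R3-PP}.
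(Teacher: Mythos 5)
Your overall strategy is the paper's own: the lemma is proved simply by writing down the explicit low-crossing-resolved $R3$ maps via the CKT (Proposition \ref{prop:R3L-v1,2} and its analogues built from the $IAR$, $OAR$, $IAL$, $OAL$ cases of Lemma \ref{lem:spv}) and reading off the $\PP$ blocks height by height, using only foam isotopy. Your treatment of the four variations $hml$, $lmh$, $hlm$, $mlh$ is correct: the extreme vanishing comes from the zero component of $\s{?}$ at the doubly smoothly-resolved object, and the middle-height product $\T{?}\compose\s{?}$ gives the $\mp 1$ entry together with entries built from $\text{b}\compose\text{u}$, $\text{z}\compose\text{d}$, which is exactly the claimed cup/cap/bubble/debubble behaviour.

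There is, however, a genuine gap in the remaining four variations, which are precisely the ones where the statement's conclusions differ. When the \emph{low} crossing is resolved it is $mhl$, $lhm$, $\cw$ and $\ccw$ (not only the starlike moves) that follow the Proposition \ref{prop:R3-v5} pattern, so your blanket claim that ``each non-starlike variation has $\PP$-block of the form $\T{?}\compose\s{?}$'' fails for the barred $mhl$ and $lhm$ maps: as Proposition \ref{prop:R3L-v1,2}(2) shows, $\overline{f_{lhm}}$ has diagonal blocks $\s{IAR}\compose\T{OAR}$ and $\rho_2\compose\rho_1^{-1}\compose\sigma$, and it is the latter that is the $\PP$ block. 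For the same reason, your explanation of the extreme-object vanishing and of the middle-height $R$ entry in the $\cw$, $\ccw$ (and $mhl$, $lhm$) cases appeals to an ``$\s{?}\compose\T{?}$ factor'' inside the $\PP$ block that is not there: in these cases the $\PP$ block is exactly the composition of an $R2$ untuck, an $R2$ tuck on a different pair of strands, and the crossing-reordering $\sigma$; the $\s{}\compose\T{}$ composition is the $\OO$ block. Consequently the vanishing on \emph{both} homologically extreme objects, the value ($+id$ for $mhl$, $lhm$ versus the rotated barrel $R$ for $\cw$, $\ccw$) of the matching middle-height component, and the sign contributed by $\sigma$ on doubly I-resolved objects must all be read off from the explicit $R2$ chain maps of Figure \ref{fig:r2b} composed across the two different strand pairs, not from the strand-past-vertex simplifications. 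Since these are exactly the variations for which the lemma asserts the nonstandard answers, the argument as written does not establish the statement there; it is repairable, but only by redoing those four cases with the correct $\PP$ block.
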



\section{Checking movie moves}
\label{sec:checking_movie_moves}

Here we will prove Theorem \ref{thm:1}, which asserts functoriality for the theory. This requires showing that chain maps in $\Kob{3}$, induced by link cobordisms, are well-defined, i.e., they are invariant under changing a cobordism presentation by a movie move. The overall strategy will be very similar to the one used in \cite{ClarMW:Fix}.

\subsection{Duality and homotopy isolation}
\label{ssec:duality}
We begin by stating a result about duality with respect to Hom-sets. For tangles $P$ and $Q$, denote any gluing of them by $P \bullet Q$, and let $\overline{Q}$ denote the reflection of $Q$. The following proposition was first presented in \cite{ClarMW:Fix}:

\begin{prop}
\label{prop:duality}
Given oriented tangles $P$, $Q$ and $R$, there is an isomorphism
between the spaces of chain maps up to homotopy
$$F : \Hom{Kh}{\Kh{P \bullet Q}}{\Kh{R}} \IsoTo \Hom{Kh}{\Kh{P}}{\Kh{R \bullet \overline{Q}}}.$$
\end{prop}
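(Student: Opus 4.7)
The plan is to realize $F$ as the natural adjunction coming from the fact that $\overline{Q}$ serves as a planar dual of $Q$ in the canopolis $\Ortang$, closely following the argument used in \cite{ClarMW:Fix} for the $\su{2}$ case. The proof has three steps.

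First, I produce two canonical chain maps in $\Kob{3}$ by applying $Kh(\su{3})$ to the geometric cup and cap bend cobordisms that rotate the strands of $Q$ past the boundary:
\begin{align*}
 \eta_Q &: \Kh{\text{id}_Q} \To \Kh{Q \bullet \overline{Q}}, \\
 \epsilon_Q &: \Kh{\overline{Q} \bullet Q} \To \Kh{\text{id}_Q}.
\end{align*}
Each such bend cobordism admits a movie presentation using only Morse moves and the Reidemeister moves of Section \ref{sec:reidemeister_maps}, all of whose induced chain maps have already been constructed and shown to be homotopy equivalences. In particular, $\eta_Q$ and $\epsilon_Q$ are well-defined up to homotopy without any circular appeal to Theorem \ref{thm:1}.

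Next, using the canopolis structure of $\Kob{3}$ to perform planar composition, I define
\begin{align*}
 F(\phi) &= \bigl(\phi \bullet \text{id}_{\overline{Q}}\bigr) \compose \bigl(\text{id}_P \bullet \eta_Q\bigr) : \Kh{P} \To \Kh{R \bullet \overline{Q}}, \\
 G(\psi) &= \bigl(\text{id}_R \bullet \epsilon_Q\bigr) \compose \bigl(\psi \bullet \text{id}_Q\bigr) : \Kh{P \bullet Q} \To \Kh{R}.
\end{align*}
Well-definition of $F$ and $G$ on homotopy classes is immediate, since planar composition of chain maps with a fixed chain map (and identities) preserves chain homotopies.

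The heart of the argument — and the only place where real work is needed — is verifying the two snake (zig-zag) identities
\begin{align*}
 \bigl(\text{id}_Q \bullet \epsilon_Q\bigr) \compose \bigl(\eta_Q \bullet \text{id}_Q\bigr) &\htpy \text{id}_{\Kh{Q}}, \\
 \bigl(\epsilon_Q \bullet \text{id}_{\overline{Q}}\bigr) \compose \bigl(\text{id}_{\overline{Q}} \bullet \eta_Q\bigr) &\htpy \text{id}_{\Kh{\overline{Q}}},
\end{align*}
from which the identities $G \compose F \htpy \text{id}$ and $F \compose G \htpy \text{id}$ follow via a direct application of the canopolis interchange law (repeatedly moving $\eta_Q$ and $\epsilon_Q$ past planar identities). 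Geometrically, each snake composite is a zig-zag cobordism from $Q$ (or $\overline{Q}$) to itself that is isotopic rel boundary to the product cobordism, and this isotopy is realized by an explicit short movie consisting of two Reidemeister~2 moves together with a pair of cancelling $1$-handle attachments. Because each step in that movie has already been shown to induce a chain homotopy equivalence, the total composite is homotopic to the identity chain map. The argument is structurally identical to the one in \cite{ClarMW:Fix}; the only $\su{3}$-specific input is the canopolis structure on $Kh(\su{3})$ established in Section \ref{sec:the-su3-theory}, together with the $R1$, $R2$, and Morse chain maps whose homotopy-equivalence properties are already in hand.
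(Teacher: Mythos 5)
Your overall strategy---bending $Q$ around the boundary via unit/counit maps $\eta_Q$, $\epsilon_Q$ and verifying the snake identities---is the same adjunction argument that the paper relies on: the paper does not reprove this proposition at all, but simply cites \cite{ClarMW:Fix}, where exactly this bending construction is carried out. The problem is your justification of the snake identities, which is where all the content lives. From the fact that each frame-to-frame step of the zig-zag movie induces a chain homotopy equivalence you may conclude only that the composite is a homotopy equivalence, not that it is homotopic to the identity. The stronger claim you actually invoke---that the zig-zag cobordism is isotopic rel boundary to the product cobordism and therefore induces the identity up to homotopy---is precisely the functoriality statement (Theorem \ref{thm:1}) that this proposition is meant to help prove, so as written the step is circular: before Theorem \ref{thm:1} is established, isotopic cobordisms are not yet known to induce homotopic chain maps, and ``well-defined up to homotopy'' for $\eta_Q$ and $\epsilon_Q$ likewise only makes sense once a particular movie presentation is fixed.

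The way out, as in \cite{ClarMW:Fix} and as the paper is careful to flag immediately after the statement, is to note that straightening the zig-zag against the constant movie requires invariance under only a very restricted set of movie moves---in particular MM9, which is exactly the move in which a crossing of $Q$ passing the fold is created by one $R2$ and cancelled by a differently paired $R2$ (plus the trivial moves MM1--5)---and then to prove MM9 invariance by direct computation, completely independently of this proposition and of Corollary \ref{cor:post-duality}. This is why the paper's MM9 verification in Section \ref{sssec:MM9} checks the map on every object of the complex rather than using homotopy isolation or duality. Your proposal needs to make this dependence explicit and restrict the straightening isotopy to moves proven beforehand; without that, the snake identities, and hence $G \compose F \htpy \operatorname{id}$ and $F \compose G \htpy \operatorname{id}$, are unproven.
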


While this result was originally proven in the context of Khovanov's $\su{2}$ theory, it clearly holds for the $\su{3}$ case without any changes to the statement or proof (for whose details we refer the interested reader to \cite{ClarMW:Fix}). It's important to note, however, that the proof assumes the theory is already invariant under MM9, the ninth movie move. As such, invariance under MM9, shown in Section \ref{sssec:MM9}, must take place with complete independence of the material in this section.

From this we get an easy corollary, also given in \cite{ClarMW:Fix}, which will be highly useful during our movie move checks. In particular, note that it applies to (the first and last frames of) every movie move.

\begin{cor}
\label{cor:post-duality}
Let $T_1$ and $T_2$ be tangles with $k$ endpoints such that
$\overline{T_1} \bullet T_2$ is an unlink with $m$ components. Then the space of
chain maps modulo chain homotopy from $\Kh{T_1}$ to $\Kh{T_2}$ in
grading $m-k$ is 1-dimensional, and all chain maps of grading higher
than $m-k$ are chain homotopic to zero.
\end{cor}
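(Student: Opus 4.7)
The plan is to apply Proposition \ref{prop:duality} with $P = \emptyset$, $Q = T_1$, and $R = T_2$ to produce an isomorphism
$$\Hom{Kh}{\Kh{T_1}}{\Kh{T_2}} \iso \Hom{Kh}{\Kh{\emptyset}}{\Kh{T_2 \bullet \overline{T_1}}},$$
which converts the problem into one about maps from the empty tangle into the complex of a closed $m$-component unlink.

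Next I would simplify the right-hand target. Since $Kh(\su{3})$ is invariant under isotopy on objects, $\Kh{T_2 \bullet \overline{T_1}}$ is homotopy equivalent to the complex of $m$ disjoint circles, and repeated application of the circle-removal relation (first line of \ref{eq:spider}) then reduces this to a direct sum of grading-shifted copies of the empty web, concentrated in homological degree zero.

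For each shifted empty-web summand $q^j \emptyset$ appearing in this decomposition, a morphism into it from $\Kh{\emptyset}$ is a closed foam in $\Cob{3}$ carrying an overall grading shift by $j$; by the closed-foam evaluation (Lemma 3.3 of \cite{MorrN:sl(3)}, noted after \ref{eq:rocket_relation}), every closed foam reduces to a scalar, and since scalars sit in grading zero this Hom space is one-dimensional over $R$ in grading $j$ and zero in every other grading. Summing over the shifted summands then shows that $\Hom{Kh}{\Kh{\emptyset}}{\Kh{T_2 \bullet \overline{T_1}}}$ is one-dimensional in its top grading and zero in all strictly higher gradings.

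The final step, and the main obstacle, is the grading bookkeeping: verifying, via the morphism grading formula \ref{eq:grading}, that the top grading on the right corresponds under Proposition \ref{prop:duality} to grading $m - k$ on the left. This requires tracking the $-B = -k$ contribution present in \ref{eq:grading} for maps between $T_1$ and $T_2$ against its absence on the closed side, together with the top grading picked up from the $q^{2m}$ summand of the unlink; once the cups and caps realizing the duality isomorphism are accounted for, the conclusion follows from a routine linear computation.
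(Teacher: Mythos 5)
Your strategy is exactly the one the paper intends (it is the argument of \cite{ClarMW:Fix} that the text cites): specialize Proposition \ref{prop:duality} to $P=\emptyset$, $Q=T_1$, $R=T_2$, replace $\Kh{T_2\bullet\overline{T_1}}$ by the delooped complex of the $m$-component unlink, and use closed-foam evaluation to see that $\Hom{Kh}{\Kh{\emptyset}}{\Kh{T_2\bullet\overline{T_1}}}$ is free of rank one in its top grading $2m$ (each circle contributes $q^{-2}\directSum q^{0}\directSum q^{2}$) and vanishes in all higher gradings. Up to that point the proposal is fine.

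The genuine gap is the step you yourself flag and then dismiss as ``routine'': how gradings transform under the duality isomorphism $F$. Proposition \ref{prop:duality} is stated with no gradings, so nothing in it tells you the shift, and the shift does not come from ``the $-B=-k$ term versus its absence on the closed side''; it is carried by the coevaluation map $\Kh{\emptyset}\To\Kh{T_1\bullet\overline{T_1}}$ (the bent curtain over $T_1$) out of which $F$ is built. That map is a composite of $k/2$ births, each of degree $+2$ by Equation \ref{eq:grading}, and degree-zero Reidemeister equivalences, so it shifts degree by $+k$; transporting the top grading $2m$ of the closed side back therefore lands you in grading $2m-k$ in the normalization of Equation \ref{eq:grading}, not $m-k$. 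A sanity check: for $T_1=T_2$ a single strand we have $k=2$, $m=1$, and the one-dimensional space must contain the identity, which has degree $0=2m-k$, whereas degree $m-k=-1$ is not even realized by any foam from the strand to itself (all such degrees $2\chi-2$ are even). So the ``routine linear computation'' is precisely where the content of the corollary lies: you must actually compute the degree of the cup/cap maps realizing $F$ and then reconcile the resulting constant with the grading convention in which the statement is phrased (the phrasing is inherited from the $\su{2}$ normalization of \cite{ClarMW:Fix}). As written, your argument neither performs that computation nor would it, if performed naively in the paper's grading, produce the stated $m-k$; until this is sorted out the proof is incomplete at its only nontrivial quantitative step.
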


The second component of machinery we'll need is the ``homotopy isolation" idea from \cite{ClarMW:Fix}.

\begin{defn}
Let $C^{\bullet}$ and $D^{\bullet}$ be complexes in a graded additive category, with $A$ a direct summand in some $C^i$. We'll say $A$ is $C$-$D$ \emph{homotopically isolated} if, for any grading zero homotopy $h:C^{\bullet} \rightarrow D^{\bullet - 1}$, the restriction of $dh + hd$ to $A$ is zero.
\end{defn}

\begin{lemma}
\label{lem:hom-iso1}
Let $f,g:C^\bullet \rightarrow D^\bullet$ be chain maps, and say $f \htpy \alpha g$ are homotopic for some scalar $\alpha$. If $f$ and $g$ agree and are nonzero on a $C$-$D$ homotopically isolated object $A$ in $C^\bullet$, then we have that $f \htpy g$ are homotopic.
\end{lemma}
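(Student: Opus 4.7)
The plan is to use the homotopy $f - \alpha g = dh + hd$ afforded by the hypothesis $f \htpy \alpha g$, and exploit homotopy isolation to force $\alpha = 1$.

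First, I would unpack what $f \htpy \alpha g$ means: there exists a grading-zero homotopy $h : C^\bullet \to D^{\bullet - 1}$ such that $f - \alpha g = dh + hd$. Restrict both sides to the summand $A \subset C^i$. By the definition of $C$-$D$ homotopic isolation, the restriction of $dh + hd$ to $A$ vanishes, so
\begin{equation*}
  (f - \alpha g)\big|_A \;=\; 0,
\end{equation*}
i.e.\ $f|_A = \alpha\, g|_A$.

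Second, I would combine this with the hypothesis that $f$ and $g$ agree on $A$ and that their common value $f|_A = g|_A$ is nonzero. Substituting $f|_A = g|_A$ into the previous equation gives $g|_A = \alpha g|_A$, so $(1 - \alpha) g|_A = 0$. Since $g|_A \neq 0$ (and we are working in a category where scalars act faithfully on the nonzero morphism $g|_A$), this forces $\alpha = 1$. Therefore the original homotopy $f \htpy \alpha g$ reads $f \htpy g$, as desired.

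There is no real obstacle here; the only subtlety is the usual one of making sure ``$g|_A \neq 0$ implies $\alpha = 1$'' is justified in the coefficient ring. Since we work over $R = \Z[\tfrac12, \tfrac13]$, which is an integral domain, the equation $(1-\alpha) g|_A = 0$ with $g|_A \neq 0$ indeed forces $\alpha = 1$. The entire argument is thus a short formal manipulation, with the content of the lemma really residing in the defined notion of homotopic isolation; what makes the lemma genuinely useful in Section \ref{sec:checking_movie_moves} is the existence of isolated summands in the complexes attached to the first frame of each movie move, which will be the substantive work to verify later.
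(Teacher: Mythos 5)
Your argument is correct and is precisely the intended one (the paper omits the proof, taking the lemma from \cite{ClarMW:Fix}): write $f - \alpha g = dh + hd$, use homotopy isolation to get $f|_A = \alpha g|_A$, and combine with $f|_A = g|_A \neq 0$ to force $\alpha = 1$, whence $f \htpy g$. Your remark that one needs scalars to act faithfully on $g|_A$ (i.e.\ torsion-freeness of the relevant morphism space over $R=\Z[\frac12,\frac13]$) is a fair point of care that the paper glosses over, and it holds in the foam categories used here.
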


By Corollary \ref{cor:post-duality}, we know that any movie move (except for MM9) changes the induced map in $\Kob{3}$ by at most a scalar. We'll show this scalar is always 1 by computing with homotopically isolated objects, which have a convenient description in the $\su{3}$ web case.

\begin{lemma}
\label{lem:hom-iso2}
Let $\Kh{T_1}$ and $\Kh{T_2}$ be the complexes for two tangle diagrams, and let $D_1$ be a web appearing as a direct summand somewhere in $\Kh{T_1}$. Then
\begin{enumerate}
\item $A$ is $\Kh{T_1}$-$\Kh{T_1}$ homotopically isolated if $D_1$ contains no cycles (as a graph) and is not connected by differentials to webs containing cycles;
\item $A$ is $\Kh{T_1}$-$\Kh{T_2}$ homotopically isolated if $\Kh{T_1}$ and $\Kh{T_2}$ contain only acyclic webs.
\end{enumerate}
\end{lemma}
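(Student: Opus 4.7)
The approach rests on understanding the space of grading-zero foam morphisms between acyclic webs, and then tracking how the zip/unzip differentials interact with such morphisms through the foam relations.

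The main auxiliary claim I would establish first is the following: if $D$ and $D'$ are acyclic $\su{3}$ webs sharing the same boundary, then the space of grading-zero foams $\Hom{}{D}{D'}$ is at most one-dimensional—spanned by a ``minimal tube'' cobordism when $D = D'$ (i.e.\ the identity foam), and zero otherwise. This claim is a direct consequence of the grading formula (Equation~\ref{eq:grading}) together with neck-cutting (Equation~\ref{eq:neck_cutting}) and the closed foam relations (Equation~\ref{eq:closed_foams}): any candidate foam between acyclic webs can be reduced via neck-cutting to a sum of ``flat'' pieces plus closed surfaces, and acyclicity of both webs ensures no nontrivial genus or closed surface can contribute in grading zero without forcing the result into higher degree.

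For Part (1), fix a grading-zero homotopy $h:\Kh{T_1}^\bullet \to \Kh{T_1}^{\bullet-1}$ and decompose $(dh+hd)|_A$ by target summand. Each component is a grading-zero foam from the acyclic web $D_1$ to another web $D'$ of $\Kh{T_1}^i$; by the hypothesis that $D_1$ is not connected by differentials to any web containing cycles, the intermediate webs (in $\Kh{T_1}^{i\pm 1}$) are themselves acyclic, and so is $D'$. The auxiliary claim immediately kills every off-diagonal component. For the diagonal component $D_1 \to D_1$, one writes out $\sum_E d_{E,D_1}\compose h_{D_1,E} + \sum_{E'} h_{E',D_1}\compose d_{D_1,E'}$, where each $h_{-,-}$ is—again by the auxiliary claim applied to acyclic source and target—a scalar multiple of an essentially unique foam, and the $d_{-,-}$ are zip or unzip maps. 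Composing a zip/unzip with such a foam and landing back on $D_1$ creates a closed bubble or tube-like feature across an acyclic region of $D_1$; by the bubble/blister, tube (Equation~\ref{eq:tube_relation}), and neck-cutting relations, each such term either vanishes outright (via the blister relation, Equation~\ref{eq:choking-torus-multiplication}) or reduces to contributions that cancel between the $dh$ and $hd$ sums.

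Part (2) is then essentially the same argument, applied globally: because every web of $\Kh{T_1}$ and $\Kh{T_2}$ is acyclic, the auxiliary claim governs every component of $(dh+hd)|_A$ without local hypotheses, and the same reductions via foam relations force each component to vanish. The main obstacle is the diagonal-component bookkeeping in Part (1): verifying that the particular combinations of zip, unzip, and tube foams produced by $dh + hd$ all collapse under the foam relations is where the actual calculation happens, whereas the off-diagonal vanishing is pure grading count. Once the auxiliary classification is in hand, however, this calculation is local and combinatorial, and in particular does not require any additional input beyond the relations already listed in Section~\ref{ssec:categorifying}.
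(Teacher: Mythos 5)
There is a genuine gap, and it starts with a grading miscount. You treat the components of a grading-zero homotopy $h$ as ``grading-zero foams'' between webs, but a grading-zero homotopy component runs between objects whose internal $q$-shifts differ by one: it is a morphism $q^{m}D \to q^{m-1}D'$, so by Equation \ref{eq:grading} its \emph{bare} grading is $+1$, not $0$. Consequently your auxiliary claim (that degree-zero morphism spaces between acyclic webs are spanned by the identity) is the wrong tool: what is needed is the vanishing of all \emph{positive} bare-degree morphisms between acyclic webs, which is exactly the paper's Proposition \ref{prop:no-positive-grading}. Once that is in hand the lemma is a two-line corollary: every relevant component of $h$ is a bare-degree-$+1$ foam between acyclic webs, hence zero, so $dh+hd$ restricted to $A$ vanishes with no further analysis. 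Moreover, the auxiliary claim you do invoke is not ``a direct consequence of the grading formula together with neck-cutting and the closed foam relations''; the honest content is the Euler-characteristic/sheet-and-seam count of Proposition \ref{prop:no-positive-grading} (removing closed seams and handles, $\chi(\widetilde{C})=S-2F$, $B=F+N$ from acyclicity, the four-sided-sheet estimate, and factoring out bigon sheets as zips/unzips), none of which appears in your sketch.

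The second gap is structural: your plan defers the real work to a ``diagonal-component bookkeeping'' in which the $dh$ and $hd$ terms are supposed to cancel via the bubble, blister, tube, and neck-cutting relations. This cancellation is asserted, not performed, and there is no general mechanism that would make it work for an \emph{arbitrary} grading-zero homotopy: if the components of $h$ could genuinely be nonzero multiples of identity-like foams, nothing forces $\sum d\compose h + \sum h\compose d$ to vanish on $A$. The lemma is true precisely because the components of $h$ themselves are forced to be zero by the positive-degree vanishing result, so no cancellation is ever needed; an approach that concedes nonzero homotopy components and then tries to cancel them cannot be completed from the ingredients you list. (A smaller inaccuracy in the same vein: in Part (1) you assert that the targets $D'$ of the composite components are acyclic, which does not follow from the hypothesis that $D_1$ is not connected by differentials to cyclic webs; the argument should be organized around killing the components of $h$ itself, as the paper does.)
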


To prove this, we'll first need a more general result about Hom-sets of foams.

\begin{defn}
The \emph{bare grading} of a morphism $C$ between webs $D_1$ and $D_2$ is given by $$\deg'(C) = 2\chi(C) - B + \frac{V}{2},$$
where $B$ is the number of boundary points on $h$ and $V$ is the total number trivalent vertices in $D_1$ and $D_2$.\footnote{Here we've simply neglected the contributions from formal shifts of boundary webs present in the usual $\Kob{3}$ grading.}
\end{defn}

\begin{prop}
\label{prop:no-positive-grading}
If $D_1$ and $D_2$ are acyclic then there are no nonzero morphisms with positive bare grading between them.
\end{prop}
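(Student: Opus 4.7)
The plan is to reduce $C$ to a canonical form via the local relations of $\Cob{3}$, then obtain a topological bound on Euler characteristic that rules out positive bare grading. First I apply neck-cutting \eqref{eq:neck_cutting} along every compressible tube in $C$, together with the tube relation \eqref{eq:tube_relation}, and evaluate every closed connected component using the closed foam relations \eqref{eq:closed_foams}. All of these relations are bare-grading homogeneous, so $C$ becomes an $R$-linear combination $\sum_i \lambda_i C_i'$ of foams $C_i'$ with no closed components and no compressible necks, each having the same bare grading as $C$.

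The heart of the proof is then to establish, for such a reduced $C_i'$ between acyclic webs $D_1$ and $D_2$, the bound
\begin{equation*}
\chi(C_i') \;\le\; \tfrac{1}{2}\left(B - \tfrac{V}{2}\right).
\end{equation*}
The right-hand side is exactly $\chi(D_1\times I)$: a web with $v$ trivalent vertices and $b$ boundary points has $\chi = (b-v)/2$, achieved by the product cobordism. Acyclicity of $D_1$ and $D_2$ is used crucially here—there is no embedded circle inside $D_1\cup D_2$ off which to cap a disk and boost $\chi$—while the absence of compressible necks forbids extra handles. The remaining work concerns the trivalent seams, where one appeals to seam-swap, the airlock relation \eqref{eq:airlock}, and the rocket relation \eqref{eq:rocket_relation} to rule out seam configurations that would otherwise inflate $\chi$.

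Substituting the bound into the grading formula gives
\begin{equation*}
\deg'(C_i') \;=\; 2\chi(C_i') - B + \tfrac{V}{2} \;\le\; \left(B - \tfrac{V}{2}\right) - B + \tfrac{V}{2} \;=\; 0,
\end{equation*}
so every surviving summand has non-positive bare grading; since the simplification preserves bare grading, any $C$ with $\deg'(C)>0$ must be zero. The main obstacle is the Euler-characteristic estimate in the second step: translating the graph-theoretic acyclicity of $D_1$ and $D_2$ into a topological constraint on reduced foams requires a careful enumeration of how 2-cells and seams can sit between two acyclic webs, and verifying that the simplifications of the first step suffice to force every $C_i'$ into a form where this bound is visible.
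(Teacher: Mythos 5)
There is a genuine gap: your argument reduces the proposition to the inequality $\chi(C_i') \le \tfrac{1}{2}\bigl(B - \tfrac{V}{2}\bigr)$, but this inequality is literally a restatement of $\deg'(C_i') \le 0$, i.e.\ of the proposition itself for the reduced foam. Everything of substance is deferred to ``a careful enumeration of how 2-cells and seams can sit between two acyclic webs,'' which you acknowledge as the main obstacle but never carry out. Moreover, the mechanism you gesture at --- appealing to seam-swap, the airlock relation, and the rocket relation to ``rule out seam configurations that would otherwise inflate $\chi$'' --- is not how the estimate actually goes: those local relations play no role in the paper's bound. In the paper, the local relations are used only in the preliminary reduction (neck cutting near closed seams, evaluation of closed foams, and discarding handles and choking handles, which have bare grading $-1$ and $0$); after that the bound is obtained by a purely combinatorial count on the reduced foam $\widetilde{C}$: one shows $\chi(\widetilde{C}) = S - 2F$ ($S$ sheets, $F$ seams), uses acyclicity of $D_1, D_2$ (each component a tree) to get $B = F + N$ with $N$ the number of boundary-web components, and then bounds $S$ via a count of sheet sides, $4S \le E + B + 3F$ when every sheet has at least four sides, with $E = 2V + N$. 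This yields $S \le 2F + \tfrac{N}{2}$ and hence $\deg'(\widetilde{C}) \le 0$.

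A second, related issue is the bigon case, which your sketch cannot handle by ``ruling out'' bad configurations: two-sided sheets genuinely occur (they are exactly the zip and unzip local pieces), and the four-sides-per-sheet count fails for them. The paper deals with this by factoring $\widetilde{C}$ as a composition of zips, unzips (each of bare degree $-1$), and a foam all of whose sheets have at least four sides. Without this step, and without the explicit counts above, the Euler characteristic estimate at the heart of your proposal remains unproven, so the argument as written does not establish the proposition.
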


\begin{proof}
Let $C:D_1 \rightarrow D_2$. Our first task is to remove the closed seams from $C$, producing a new foam called $\widetilde{C}$. Begin by performing neck cutting on each sheet incident on a closed seam. (If there are $k$ closed seams to begin with, there will be $3k$ operations). This will produce $3^{3k}$ terms, in which all the original closed seams are sequestered in closed foams. We can evaluate each one of these closed foams to a scalar by the remark in Section \ref{ssec:categorifying}, leaving us with a new presentation of $C = \sum C_i$, a degree-homogeneous linear combination. Here the $C_i$ may still have closed seams, but only of the variety appearing in the neck cutting relation: locally, they will all look like ``choking handles" $\left(\mathfig{0.1}{cobordisms/handle_disc_bdy_left}\right)$. At this point, we can perform neck cutting once again (to remove unwanted tubes connecting sheets) so that each $C_i$ has the following pieces:
\begin{itemize}
\item 0-cells given by trivalent vertices and boundary points.
\item 1-cells given by seams, boundary lines, and edges in $D_i$.
\item 2-cells given either by discs in choking handles, or by sheets that intersect $D_1 \cup D_2$ nontrivially and that may have handles or choking handles.
\end{itemize}

Now pick any $C_i \neq 0$ and consider the foam $\widetilde{C}$ obtained from $C_i$ by removing all handles and choking handles. Since these pieces have bare grading $-1$ and $0$, respectively, we have that $\deg'(C) = \deg'(C_i) \leq \deg'(\widetilde{C})$. Also note that $\widetilde{C}$ has no closed seams and no handles, and can thus be decomposed as follows:
\begin{itemize}
\item 0-cells given by trivalent vertices and boundary points.
\item 1-cells given by seams, boundary lines, and edges in $D_i$.
\item 2-cells given by genus zero sheets that intersect $D_1 \cup D_2$ nontrivially.
\end{itemize}
It suffices to prove the result for this much simpler foam $\widetilde{C}$.

Let $F$ be the number of seams in $\widetilde{C}$, and let $S$ be the number of sheets. We now claim that $\chi(\widetilde{C}) = S-2F$, which we can see as follows. Imagine building $\widetilde{C}$ out of its $S$ disjoint sheets. We'll then add seams, joining together three sheets at each seam. Each of these operations will reduce the Euler characteristic by $2$, giving the formula. Thus, since $F=\frac{V}{2}$, $\deg'(\widetilde{C}) = 2S - 3F - B$.

Next we'll show that, if $D_1$ and $D_1$ are acyclic, this formula can be modified to $\deg'(\widetilde{C}) = 2S -4F - N$, where $N = \pi_0 (D_1) + \pi_0 (D_2)$, i.e., the total number of connected components of the boundary webs. Assuming $D_1$ and $D_2$ are acyclic just means that each of their components is a tree. This says that $B = V_i + 2n_i$, where $n_i$ is the number of connected components and $V_i$ is the number of trivalent vertices in $D_i$. Adding these two equations we get
\begin{align*}
    2B &= V_1 + V_2 + 2(n_1 + n_2)\\
    &= V + 2N\\
    \Rightarrow B &= \frac{V}{2} + N = F + N.
\end{align*}
Thus we have that $\deg'(\widetilde{C}) = 2S - 3F - B = 2S - 4F -N$.

Remember, our goal is to show that $\deg'(\widetilde{C}) \leq 0$, which is now equivalent to proving
\begin{align}
\label{eq:seams}
S \leq 2F + \frac{N}{2}.
\end{align}
We'll get there by considering the boundary 1-cells of the sheets in $\widetilde{C}$, which, as mentioned before, consist of seams, boundary lines, and edges in $D_i$. It's an easy observation that the total number of edges, $E$, in $D_1$ and $D_2$ is just $E = 2V + N$. These segments, as well as the boundary lines, can serve as part of the boundary for a single sheet. Each seam, however, will serve as a boundary component for three distinct sheets (from our acyclicity assumption).

For an example, let's say that every sheet in $\widetilde{C}$ were a $d$-gon. Then we would have that $S d = E + B + 3F$: each side of each sheet corresponds to a graph edge, a boundary line, or one third of a seam. Let's make a more general assumption: that every sheet in $\widetilde{C}$ has at least four sides. If this is the case, then
\begin{align*}
    4S &\leq E + B + 3F\\
    &\leq (2V + N) + (F + N) + 3F\\
    &\leq 8F + 2N.
\end{align*}

This would give us Equation \ref{eq:seams}. Note that there cannot be three-sided sheets (or in fact any odd-sided sheets, by acyclicity); unfortunately, there can be bigons. However, we observe that bigons in $\widetilde{C}$ must have one edge in one of the $D_i$ and the other edge a seam intersecting the same $D_i$ twice. In other words, this bigon must be part of a zip or unzip morphism. Thus we can factor $\widetilde{C}$ into a stack of zips, unzips, and a foam in which each sheet has at least four sides. Since the zip and unzip morphisms have bare degree $-1$, we have our result.
\end{proof}

\begin{proof}[Proof of Lemma \ref{lem:hom-iso2}]
A degree zero homotopy is a morphism $$h: q^{m}D_1 \rightarrow q^{m-1}D_2.$$ (Here, we could have $D_2$ in either $\Kh{T_1}$ or $\Kh{T_2}$, depending on which part of the lemma we're trying to prove.) Thus, by Equation \ref{eq:grading}, $\deg'(h) = 1$. And by Proposition \ref{prop:no-positive-grading}, $h$ must be the zero map.
\end{proof}

The first part of this lemma is well-suited for the reversible movie moves (MM6-10) and the second part for the those involving Morse moves. It's an easy observation that every web in the initial (and final) complex $C$ of movie moves 6, 7, and 8 is $C$-$C$ homotopically isolated, and every web in the initial and final complexes $C$ and $D$ of 11, 13, and 15 is $C$-$D$ homotopically isolated. This means we can compare induced chain maps simply by applying them to a single object of our choice. Movie moves 12 and 14, unfortunately, do not contain homotopically isolated objects, so we'll need to compute the induced maps on all objects; luckily the complexes are small, and this is not a great burden. We'll handle movie moves 9 and 10 with different techniques: the former because of the paragraph after Proposition \ref{prop:duality}, and the latter just to illustrate something fun.

Keep in mind that, as always, all crossings in these moves must be ordered, and they may need to be reordered to be consistent with the conventions we've defined for the Reidemeister maps. (Recall the discussion about planar compositions of complexes in Section \ref{ssec:link-homology} and Appendix \ref{ssec:sign-conventions}) However, the chain maps induced by crossing reorderings are trivial in every movie move except MM6 and MM9. The signs appearing in MM6 are particularly nasty, but we will show some sample calculations. 
\subsection{MM1-5}
\label{sssec:MM1-5}
The first five movie moves are trivial; they simply say that a
Reidemeister move followed by its inverse is the identity.

\subsection{MM6-10}
Movie moves 6 through 10 involve no Morse moves, and so are
reversible. We only need to check one time direction, and in all cases we'll be comparing the map induced by the movie shown to the identity map (induced by the constant movie).\footnote{We apologize to the thorough reader of \cite{ClarMW:Fix}, for whom much of the prose and organization of the MM6 and MM8 calculations may induce d\'{e}j\`{a} vu.}

\subsubsection*{MM6}
\label{sssec:MM6}
$$\mathfig{0.5}{movie_moves/MM6}$$

There are 24 variations of MM6. To see this, we'll first make use of rotational symmetry to require that the 'horizontal' strand (the one
not involved in either $R2$ move) points from left to right. There are then sixteen possibilities for the initial frame of the movie move; these come from four choices of height orderings and four choices of orientations.
The horizontal strand can either lie entirely above or entirely below the two vertical strands ('non-interleaved'), or it may pass under one and over the
other ('interleaved', 'ascending' or 'descending').
The two vertical strands may be either parallel or anti-parallel. When they are parallel, they may point up or down, and when they
are anti-parallel they may have a clockwise or anti-clockwise orientation. All of these variations are displayed in Figure \ref{fig:MM6-variations}.

\begin{figure}[ht]
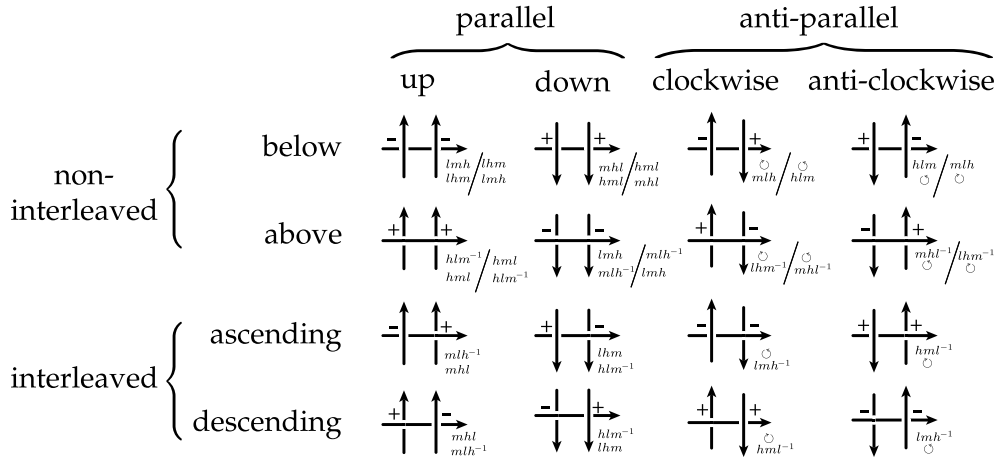

$$\mathfig{0.99}{movie_moves/MM6/table_of_variations}$$
\caption[16 variations for the initial frame of MM6]{16 variations for the initial frame of MM6.}\label{fig:MM6-variations}
\end{figure}

Further, the eight variations in which the strands are 'non-interleaved' (the first two rows of Figure \ref{fig:MM6-variations}) each have two sub-variations,
which we don't see until the second frame of the movie. Of the two vertical strands, either one can pass above the other during the $R2$ moves; in Figure \ref{fig:MM6-variations}, the 'left passing above the right' sub-variation is listed to the left of the slash. In the 'interleaved' variations, there is no choice here.

We will thus treat four major cases,
\begin{itemize}
\item non-interleaved, parallel variations,
\item non-interleaved, anti-parallel variations,
\item interleaved, parallel variations and
\item interleaved, anti-parallel variations.
\end{itemize}

\paragraph{\textbf{Non-interleaved parallel variations}}
There are four possible initial frames that are 'non-interleaved' and have parallel vertical strands. Each of these
initial frames has two possible sub-variations, depending on the relative heights of the vertical strands during the $R2$ moves.
For each of the four initial frames, we will treat uniformly the sub-variations in which the upper $R2$-induced
crossing is negative and the lower one is positive, and then indicate how to treat the other four sub-variations.

Recall that our lemmas encapsulating the details of the $R3$ variations require that we
separate the initial and final complexes into layers $\mathcal{O}$ and $\mathcal{P}$
by resolving a crossing. Maneuvering through the pair of $R3$s in
this movie move is most efficiently managed by resolving the
$R2$-induced crossings: the upper one for the first $R3$, and the
lower one for the second $R3$. Notice that since the upper crossing
is negative, the first $R3$ will have homological ordering $\OP$,
while the second $R3$ will have ordering $\PO$. Since the horizontal strand could be either above or below the vertical ones, these two crossing could be either the high or
low crossings in their respective $R3$ moves. Luckily, we have lemmas that deal with either case, so we needn't treat them separately.

\begin{figure}[h]
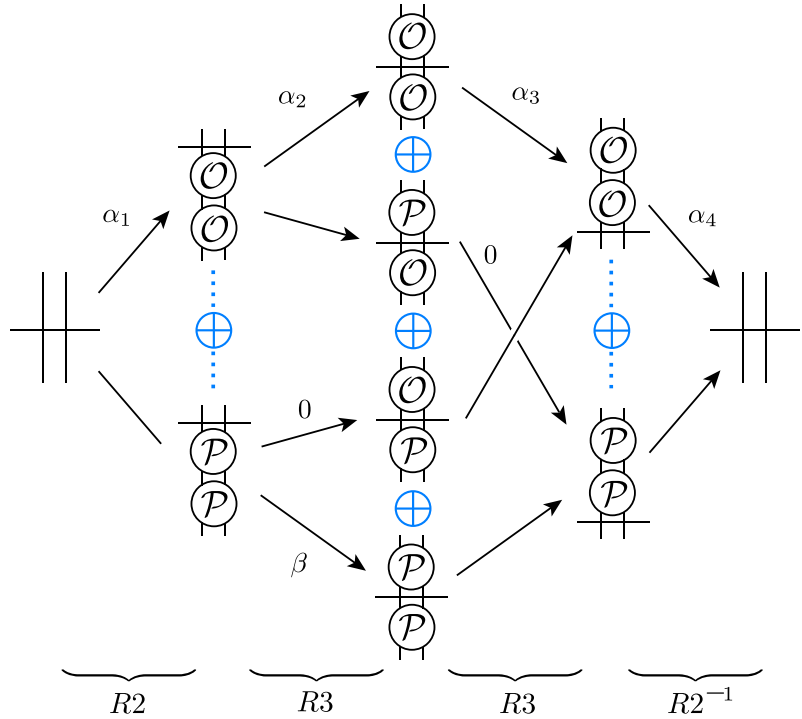

\begin{align*}
\mathfig{0.8}{movie_moves/MM6/MM6_comps}
\end{align*}
\caption[MM6 maps for the non-interleaved variations]{MM6 maps for the non-interleaved variations.}
\label{fig:MM6-comps}
\end{figure}

Our ``bundle" of maps for this subcase is given in Figure \ref{fig:MM6-comps}, where $\mathcal{O}$s and $\mathcal{P}$s describe
whether the indicated crossing resolution has strands orthogonal
or parallel to the horizontal strand. For example
$\mathfig{0.06}{movie_moves/MM6/MM6_PO}$ is our notation for
$\mathfig{0.06}{movie_moves/MM6/MM6_POstrands1}$ (or, thinking ahead, $\mathfig{0.06}{movie_moves/MM6/MM6_POstrands2}$ if the vertical strands are anti-parallel). Also, we've
cheated slightly with this diagram: the fourth column should
contain two additional summands, those with mixed
$\mathcal{O}$s and $\mathcal{P}$s. However, while there are
non-zero maps into these summands, the $R2^{-1}$ maps out are always
zero. Thus we needn't excessively complicate things with their
presence.

We're left with a sum of four compositions. The two middle
compositions are both zero, as each contains a leg (labelled with
``$0$") that's zero by Lemma \ref{lem:R3-OP}. The top
composition ($\alpha_{i}$'s) is just the identity: $\alpha_1$ and $\alpha_4$ are components of $R2a$ moves,
and $\alpha_2$ and $\alpha_3$ are each the identity, by Lemma
\ref{lem:R3-OO}. (Each map is a component of the $\OO$ map; when the horizontal strand lies below,
the $R3$ variations are $lmh,lhm,mhl$ and $hml$, which are exactly the four for which the $\OO$ part of the $R3$ map is the identity, and when the
horizontal strand lies above, the $R3$ variations are $hml,hlm,lmh$ and $mlh$, which are exactly the four for which the $\OO$ part of the $\overline{R3}$ map is the identity.)

The bottom composition is slightly more mysterious, but we see that the map $\beta$ sends doubly smoothly-resolved objects to zero by Lemma \ref{lem:R3-PP}. Thus, if we choose a doubly smoothly-resolved object to begin with, it will map to the doubly smoothly-resolved object in $\mathfig{0.06}{movie_moves/MM6/MM6_PP}$, and thereafter to zero. Further, as mentioned before, any initial object here is homotopically isolated, so the computation with this particular one suffices. Note also that, with this choice, the top composition involves only objects with smoothly resolved crossings, so we needn't worry about extra signs from crossing reorderings.

The other four sub-variations, in which the signs of the
$R2$-induced crossings are reversed, are proven analogously: note
that the objects in Figure \ref{fig:MM6-comps} will then have all
$\mathcal{O}$s and $\mathcal{P}$s swapped.

\paragraph{\textbf{Non-interleaved anti-parallel variations}}
First consider those cases in which the left vertical strand is oriented downward and
the right upward. Again we'll be referring to Figure \ref{fig:MM6-comps}. Consider
the object $\mathfig{0.1}{movie_moves/MM6/MM6f1_trace}$ which, since
the two signs of the initial crossings now differ, has homologically extreme height.

The composition $\alpha_4 \compose \alpha_3 \compose \alpha_2 \compose \alpha_1$ now specifies to

{
\begin{align*}
\newcommand{\pd}[1]{\mathfig{0.1}{movie_moves/MM6/#1}}
\newcommand{\pa}[1]{\mathfig{0.05}{movie_moves/MM6/#1}}
\xymatrix@R-7.5mm{
    \pd{MM6f1} \ar[r]^{R_2b} & \pd{MM6f2} \ar[r]^{R_3} & \pd{MM6f3} \ar[r]^{R_3} & \pd{MM6f4} \ar[r]^{{R_2b}^{-1}} & \pd{MM6f5} \\
    \\
    \pd{MM6f1_trace} \ar@{|->}[r]^{\bar{r}} & \pd{MM6f2_trace} \ar@{|->}[r]^{-1} & \pd{MM6f3_trace} \ar@{|->}[r]^{-R} & \pd{MM6f4_trace} \ar@{|->}[r]^{-r} & \pd{MM6f1_trace} \\
}
\end{align*}
where $\bar{r}$ and $r$ are the upward- and downward-opening half barrels appearing in the the $R2$ (and strand-past-vertex) chain maps, and $R$ is the (appropriately rotated) morphism from Lemma \ref{lem:R3-OO}, shown again below
$$R = \ \mathfig{0.15}{foams/R3starlike_OO}.$$
This composition reduces via an airlock relation to $id$.

Of course, starting with an extreme object also guarantees this $\alpha$ composition is the only one we need to worry about, as $\beta=0$ from Lemma \ref{lem:R3-PP}. There's no crossing reordering sign here, either, but we do need to check. Below is an example calculation for one of the variations (we leave the others as an exercise), giving a total sign of $(-1)^2$. Recall that a crossing reordering map $\sigma_{ij}$ only gives a sign when mapping an object in which the crossings labeled $i$ and $j$ are both I-resolved. The unlabeled maps have already been described above.
{
\begin{align*}
\newcommand{\pd}[1]{\mathfig{0.055}{movie_moves/MM6/reordering1/#1}}
\newcommand{\pa}[1]{\mathfig{0.055}{movie_moves/MM6/#1}}
\xymatrix@R-7.5mm@C-2.5mm{
    \pd{MM6ro1f1} \ar[r]^{R_2b} & \pd{MM6ro1f2} \ar[r]^{R_3} & \pd{MM6ro1f3a} \ar[r]^{\sigma_{34}} & \pd{MM6ro1f3b} \ar[r]^{\sigma_{12}} & \pd{MM6ro1f3c}  \\ & & & \qquad \ar[r]^{R_3} & \pd{MM6ro1f4a} \ar[r]^{\sigma_{34}} & \pd{MM6ro1f4b}
    \ar[r]^{{R_2b}^{-1}} & \pd{MM6ro1f5a} \ar[r]^{\sigma_{12}} & \pd{MM6ro1f5b}
    \\
    \pa{MM6f1_trace} \ar@{|->}[r]^{} & \pa{MM6f2_trace} \ar@{|->}[r]^{} & \pa{MM6f3_trace} \ar@{|->}[r]^{-1} & \pa{MM6f3_trace} \ar@{|->}[r]^{1} & \pa{MM6f3_trace} \\ & & & \qquad \ar@{|->}[r]^{} & \pa{MM6f4_trace} \ar@{|->}[r]^{-1} & \pa{MM6f4_trace} \ar@{|->}[r]^{} & \pa{MM6f1_trace} \ar@{|->}[r]^{1} & \pa{MM6f1_trace}
}
\end{align*}
}

The argument for the case in which the left vertical strand is oriented upward, and the right downward, is essentially the same.

}

\paragraph{\textbf{Interleaved variations}}
There are eight variations, and essentially two distinct computations will cover them all. Start with $hml^{-1}/\circlearrowright$, $\circlearrowleft/lmh^{-1}$, $mlh^{-1}/mhl$, and $lhm/hlm^{-1}$: we'll show the calculation for the first, and explain the necessary alterations for the other three versions.

{
\newcommand{\pd}[1]{\mathfig{0.1}{movie_moves/MM6/#1}}
\newcommand{\pa}[1]{\mathfig{0.05}{movie_moves/MM6/#1}}

\begin{align*}%
\xymatrix@R-7.5mm@C-1.5mm{
    \pd{MM6intaf1} \ar[r]^{R2b} & \pd{MM6intaf2} \ar[r]^{R3_{hml}^{-1}} & \pd{MM6intaf3} \ar[r]^{R3_{\circlearrowright}} & \pd{MM6intaf4} \ar[r]^{{R2b}^{-1}} & \pd{MM6intaf5} \\
    \\
    \pd{MM6f1_trace} \ar@{|->}[dr] \ar@{|->}[r] & \pd{MM6f2_trace} \ar@{|->}[dr]^{\PP} \ar@{|->}[r]^{\PP} \ar@{}[d]|{\directSum} & \pd{MM6f3_trace} \ar@{|->}[r]^{\OO} \ar@{}[d]|{\directSum} & \pd{MM6f4_trace} \ar@{|->}[r] \ar@{}[d]|{\directSum} & \pd{MM6f1_trace} \\
    & \pd{MM6intaf2r2_trace} \ar@{|->}[dr]^{\PP} & \pd{MM6intaf3r2_trace} \ar@{|->}[r]^{\OO} \ar@{|->}[dr]^{\OO} & \pd{MM6intaf4r2_trace} \ar@{|->}[ur] \ar@{}[d]|{\directSum} & \\
    & & 0 & \pd{MM6intaf4r3_trace} \ar@{|->}[r] & 0
}
\end{align*}
}

Notice that our first $R3$ map is ordered $\OP$ and the second $\PO$, each with the high crossing resolved, and that the maps for these moves are labeled by their source and target layers; in particular, the initial $\mathcal{O}$ layer for the second move and the final $\mathcal{P}$ layer for the first move coincide.

Lemma \ref{lem:R3-OP} tells us there are only four compositions we need to keep track of here. The first map into the second row has a doubly smoothly-resolved target in the initial $\mathcal{P}$ layer of $R3_{hml}^{-1}$, which thereafter maps to zero by Lemma \ref{lem:R3-PP}. The composition including the rest of the second row contains a blister, and thus is the zero map; this is because the second map has a bubble from Lemma \ref{lem:R3-PP}, the third map unzips the bubbled bigon by Lemma \ref{lem:cw-mid}, and the fourth map, an $R2b$ untuck, caps it off. The composition terminating at zero in the third row also uses Lemma \ref{lem:cw-mid}.

Thus we're left with only the first row, which is easily seen to be the same composition we saw in the non-interleaved anti-parallel case: the identity.

The calculations for the $\circlearrowleft^{-1}/lmh^{-1}$, $mlh^{-1}/mhl^{-1}$, and $lhm^{-1}/hlm$ variations are very similar. For $\circlearrowleft^{-1}/lmh^{-1}$, the initial object will have an I-resolved left crossing and a smoothly-resolved right crossing, and we'll resolve each $R3$ move into layers using the low crossing. Thus we'll need to compute using the $\overline{R3}$ maps. The $mlh^{-1}/mhl$ and $lhm/hlm^{-1}$ variations are even easier: we start with the doubly smoothly-resolved object in each case, and resolve into layers using the high crossings or the low crossings, respectively. Also, in each of these three variations, there is no need for an analogy of the obscure Lemma \ref{lem:cw-mid}. This is because the corresponding $\OO$ map originating in the second row always has just one component, the identity, by Lemma \ref{lem:R3-OO}. Crossing reordering maps are trivial in all four of these variations.

The computations for $\circlearrowright/hml^{-1}$, $lmh^{-1}/\circlearrowleft^{-1}$, $mhl^{-1}/mlh^{-1}$, and $hlm/lhm^{-1}$ are somewhat different; again, we'll explicitly show the first.
{
\newcommand{\pd}[1]{\mathfig{0.1}{movie_moves/MM6/#1}}
\newcommand{\pa}[1]{\mathfig{0.05}{movie_moves/MM6/#1}}
\begin{align*}%
\xymatrix@R-7.5mm@C-1.5mm{
    \pd{MM6intbf1} \ar[r]^{R2b} & \pd{MM6intbf2} \ar[r]^{R3_{\circlearrowright}} & \pd{MM6intbf3} \ar[r]^{\low{hml}{-1}} & \pd{MM6intbf4} \ar[r]^{{R2b}^{-1}} & \pd{MM6intbf5} \\
    \\
    \pd{MM6intbf1_trace} \ar@{|->}[r] \ar@{|->}[dr] & \pd{MM6intbf2_trace} \ar@{|->}[r]^{\OO} \ar@{}[d]|{\directSum} & \pd{MM6intbf3_trace} \ar@{|->}[r]^{\OO} \ar@{|->}[dr]^{\OP} \ar@{}[d]|{\directSum} & \pd{MM6intbf4_trace} \ar@{|->}[r] \ar@{}[d]|{\directSum} & \pd{MM6intbf1_trace} \ar@{}[d]|{\directSum} \\
     & \pd{MM6intbf2r2_trace} \ar@{|->}[r]^{\OO} & \pd{MM6intbf3r2_trace} \ar@{|->}[r]^{\PP} & \pd{MM6intbf4r2_trace} \ar@{|->}[r] & \pd{MM6intbf5r2_trace}
}
\end{align*}
}
Now our first $R3$ map is ordered $\PO$ with the high crossing resolved, and the second is ordered $\OP$ with the low crossing resolved. Again, we'll keep track of the layers to which objects belong by referring to the labels on the maps.

By Lemma \ref{lem:R3-OP}, we have three compositions to consider. Two of them factor through the second row, and thus map to a complex with the left crossing I-resolved; since our map is a multiple of the identity, these compositions must sum to zero. (Note that the $\OO$ map on the second row comes from Lemma \ref{lem:cw-mid}.) So we're left with the first row. Using Lemma \ref{lem:R3-OO} for the first $R3$, Lemma \ref{lem:R3L-OO} for the second $R3$ (where our map comes from resolving the low crossing), the $R2b$ map definitions, and an application of the airlock relation, we get the map $(-r) \compose id \compose (-R) \compose \bar{r} = -id$. We'll also get a crossing reordering sign here ($(-1)^5$, shown below), giving us the identity on the nose.
{
\begin{align*}
\newcommand{\pd}[1]{\mathfig{0.055}{movie_moves/MM6/reordering2/#1}}
\newcommand{\pa}[1]{\mathfig{0.055}{movie_moves/MM6/#1}}
\xymatrix@R-7.5mm@C-2.5mm{
    \pd{MM6f1} \ar[r]^{R_2b} & \pd{MM6f2a} \ar[r]^{\sigma_{23}} & \pd{MM6f2b} \ar[r]^{\sigma_{12}} & \pd{MM6f2c} \ar[r]^{R_3} & \pd{MM6f3a} \ar[r]^{\sigma_{23}} & \pd{MM6f3b} \ar[r]^{\sigma_{12}} & \pd{MM6f3c}
    \ar[r]^{\sigma_{34}} & \pd{MM6f3d} \\ \qquad \; \ar[r]^{\sigma_{23}} & \pd{MM6f3e} \ar[r]^{R_3} & \pd{MM6f4a} \ar[r]^{\sigma_{12}} & \pd{MM6f4b} \ar[r]^{\sigma_{23}} & \pd{MM6f4c} \ar[r]^{\sigma_{34}} & \pd{MM6f4d} \ar[r]^{R_2b^{-1}} & \pd{MM6f5a} \ar[r]^{\sigma_{12}} & \pd{MM6f5b}
    \\
    \pa{MM6intbf1_trace} \ar@{|->}[r]^{} & \pa{MM6intbf2_trace} \ar@{|->}[r]^{-1} & \pa{MM6intbf2_trace} \ar@{|->}[r]^{1} & \pa{MM6intbf2_trace} \ar@{|->}[r]^{} & \pa{MM6intbf3_trace} \ar@{|->}[r]^{-1} & \pa{MM6intbf3_trace} \ar@{|->}[r]^{1} & \pa{MM6intbf3_trace} \ar@{|->}[r]^{-1} & \pa{MM6intbf3_trace} \\ \qquad \; \ar@{|->}[r]^{1} & \pa{MM6intbf3_trace} \ar@{|->}[r]^{} & \pa{MM6intbf4_trace} \ar@{|->}[r]^{-1} & \pa{MM6intbf4_trace} \ar@{|->}[r]^{1} & \pa{MM6intbf4_trace} \ar@{|->}[r]^{-1} & \pa{MM6intbf4_trace} \ar@{|->}[r]^{} & \pa{MM6intbf1_trace} \ar@{|->}[r]^{1} & \pa{MM6intbf1_trace}
}
\end{align*}
}

There are a few modifications necessary for $lmh^{-1}/\circlearrowleft$, $mhl/mlh^{-1}$, and for $hlm^{-1}/lhm$. In the $lmh^{-1}/\circlearrowleft$ case, we start with the object with smoothly-resolved left crossing and I-resolved right crossing, and resolve the first $R3$ on low and the second on high; a crossing ordering sign will appear here. For each of $mhl/mlh^{-1}$ and $hlm^{-1}/lhm$ our initial object will be the doubly smoothly-resolved one; $hlm^{-1}$ and $mlh^{-1}$ should be resolved on low, while $mhl$ and $lhm$ should be resolved on high. In all three cases, the identity will result.

\subsubsection*{MM7}
\label{sssec:MM7}
$$\mathfig{0.4}{movie_moves/MM7}$$

There are only four variations of MM7, depending on the
orientation of the strand, and whether the leading crossing is
positive or negative. It's easy to check that reversing orientations
has little effect on the two subsequent calculations.

When the leading crossing is positive, we get
\begin{align*}
\newcommand{\pd}[1]{\mathfig{0.09}{movie_moves/MM7/#1}}
\newcommand{\pa}[1]{\mathfig{0.05}{movie_moves/MM7/#1}}
\xymatrix@R-7.5mm@C+5mm{
    \pd{MM7f1} \ar[r]^{R_1a} & \pd{MM7f2} \ar[r]^{R_1b} & \pd{MM7f3} \ar[r]^{{R_2b}^{-1}} & \pd{MM7f4} \\
    \\
    \pd{MM7f1} \ar@{|->}[r]^{\text{saddle}} & %
    \pd{MM7f2_trace} \ar@{|->}[r]^{\text{cup}} & %
    \pd{MM7f3_trace} \ar@{|->}[r]^{\mathfig{0.075}{movie_moves/MM7/untuck_map}} & %
    \pd{MM7f4}, %
}
\end{align*}
while a negative crossing results in
\begin{align*}
\newcommand{\pd}[1]{\mathfig{0.1}{movie_moves/MM7/#1}}
\newcommand{\pa}[1]{\mathfig{0.05}{movie_moves/MM7/#1}}
\xymatrix@R-7.5mm@C+7mm{
    \pd{MM7f1} \ar[r]^{R_1b} & \pd{MM7f2n} \ar[r]^{R_1a} & \pd{MM7f3n} \ar[r]^{{R_2b}^{-1}} & \pd{MM7f4} \\
    \\
    \pd{MM7f1} \ar@{|->}[r]^{\text{cup}} & %
    \pd{MM7f2_trace} \ar@{|->}[r]^{\text{saddle}} & %
    \pd{MM7f3_trace} \ar@{|->}[r]^{\mathfig{0.065}{movie_moves/MM7/untuck_map}} & %
    \pd{MM7f4}. %
}
\end{align*}
Either composition is easily seen to be the identity morphism.

\subsubsection*{MM8}
\label{sssec:MM8}
$$\mathfig{0.5}{movie_moves/MM8}$$

This is the only movie move involving all three Reidemeister moves. First let's note some symmetries. By a rotation of the whole diagram, we can
assume the $R1$ move happens on the horizontal strand, beginning on the
right. Moreover, we can assume that the horizontal strand is oriented
right to left (otherwise, we can obtain this condition by a $\pi$
rotation of its time reversal).

There are then sixteen variations, depending on whether the vertical strand
lies above or below the horizontal strand, its orientation, the sign of
the crossing introduced by the first Reidemeister move in the first
frame, and finally whether the first Reidemeister move introduces a twist on the left or right side. Figure \ref{fig:MM8-comps} shows all the maps involved.
\begin{figure}[ht]
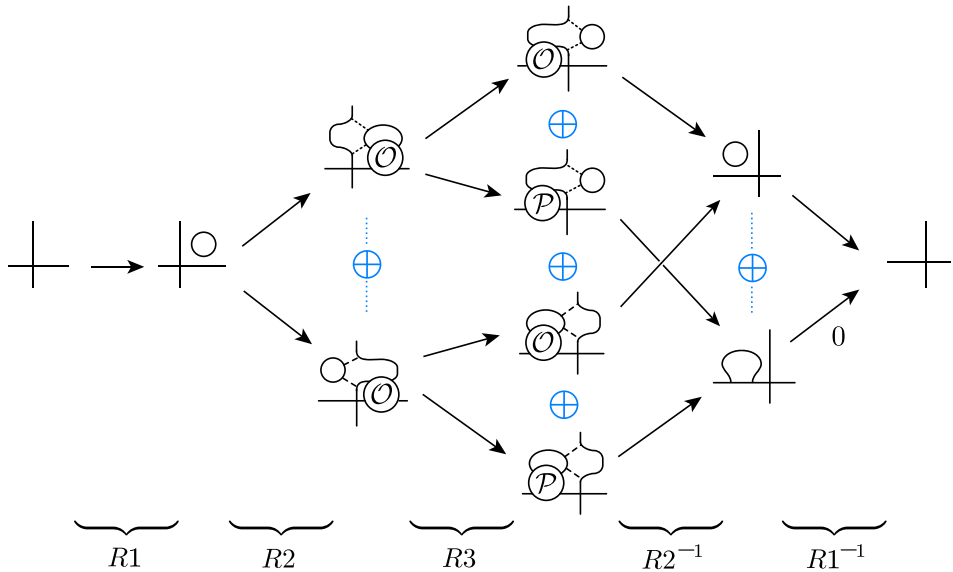

$$\mathfig{0.95}{movie_moves/MM8/MM8_compositions}$$
\caption[Possible compositions for MM8]{Possible compositions for MM8.}
\label{fig:MM8-comps}
\end{figure}
The dotted/dashed lines are contingent upon crossing signs and orientations: either all of the dotted lines will appear, and none of the dashed, or vice versa. Note that the crossing introduced by the $R1$ move is always either the low or high crossing in the $R3$ move, so we will denote its resolution with either $\mathcal{O}$ or $\mathcal{P}$ as we did in the computation for MM6. We can also observe that any map factoring through the resolution $\mathfig{0.08}{movie_moves/MM8/killstate}$ must be zero, since this object maps to zero under $R1$. Thus we need only concern ourselves with the other two compositions in Figure \ref{fig:MM8-comps}.

Consider the case of a negative twist, but ignore whether the twist appears on the left or right side of the horizontal strand, as this barely changes any of the calculations. Our computation will work regardless of whether the vertical strand is above or below the horizontal strand. If above, we'll see the $hml, lmh, mhl,$ and $lhm$ variations of $R3$ and use Lemma \ref{lem:R3-OO}; if below, the relevant $R3$ moves are $hml, lmh, hlm,$ and $mlh$ and we can apply Lemma \ref{lem:R3L-OO}. Either way, all $R3$ map components we'll encounter are just the identity. The two compositions when the vertical strand is oriented downward are as follows:

{
\newcommand{\pd}[1]{\mathfig{0.09}{movie_moves/MM8/#1}}%
\newcommand{\pa}[1]{\mathfig{0.045}{movie_moves/MM8/#1}}%
\begin{align*}%
\xymatrix@R-7.5mm@C-1.5mm{
    \pd{MM8v2f1} \ar[r]^{R_1b} & \pd{MM8v2f2} \ar[r]^{R_2b} & \pd{MM8v2f3} \ar[r]^{R_3} & \pd{MM8v2f4} \ar[r]^{{R_2a}^{-1}} & \pd{MM8v2f5} \ar[r]^{{R_1b}^{-1}} & \pd{MM8v2f1} \\
    \\
    \pd{MM8v2f1_trace} \ar@{|->}[r]^{\pa{MM8v2_map1}} & %
    \pd{MM8v2f2_trace} \ar@{|->}[r]^{\bar{r}} \ar@{|->}[dr]_{\beta} & %
    \pd{MM8v2f3_trace} \ar@{|->}[r]^{1} \ar@{}[d]|{\directSum} & %
    \pd{MM8v2f4_trace} \ar@{|->}[r]^{-\text{u} \compose \text{d}} \ar@{}[d]|{\directSum} & %
    \pd{MM8v2f5_trace} \ar@{|->}[r]^{s} & %
    \pd{MM8v2f1_trace} \\
     & &
     \pd{MM8v2f3_trace2} \ar@{|->}[r]^{1} &
     \pd{MM8v2f4_trace2} \ar@{|->}[ur]_{1} & &
}
\end{align*}
}

\noindent Here $\beta$ is the seamless component of the $R2b$ map, and $s$ is the saddle from $R1b$. The bottom map in this composition is just the identity (there are clearly no crossing reordering issues here), while the top map contains a blister, and is thus zero.

If the vertical strand is oriented upward, we'll see the following:
{
\newcommand{\pd}[1]{\mathfig{0.09}{movie_moves/MM8/#1}}%
\newcommand{\pa}[1]{\mathfig{0.045}{movie_moves/MM8/#1}}%
\begin{align*}%
\xymatrix@R-7.5mm@C-1.5mm{
    \pd{MM8v1f1} \ar[r]^{R_1b} & \pd{MM8v1f2} \ar[r]^{R_2a} & \pd{MM8v1f3} \ar[r]^{R_3} & \pd{MM8v1f4} \ar[r]^{{R_2b}^{-1}} & \pd{MM8v1f5} \ar[r]^{{R_1b}^{-1}} & \pd{MM8v1f1} \\
    \\
    \pd{MM8v1f1_trace} \ar@{|->}[r]^{\pa{MM8v2_map1}} & %
    \pd{MM8v1f2_trace} \ar@{|->}[r]^{1} \ar@{|->}[dr]_{\text{b} \compose \text{z}} & %
    \pd{MM8v1f3_trace} \ar@{|->}[r]^{1} \ar@{}[d]|{\directSum} & %
    \pd{MM8v1f4_trace} \ar@{|->}[r]^{\beta} \ar@{}[d]|{\directSum} & %
    \pd{MM8v1f5_trace} \ar@{|->}[r]^{s} & %
    \pd{MM8v1f1_trace} \\
     & &
     \pd{MM8v1f3_trace2} \ar@{|->}[r]^{1} &
     \pd{MM8v1f4_trace2} \ar@{|->}[ur]_{-r} & &
}
\end{align*}
}
This time the top map contains a sphere, and is thus zero. The bottom map takes some patience to see, but modulo the airlock relation its just the identity. Crossing reordering maps act trivially here.

The calculations for the positive twist case are almost identical.

\subsubsection*{MM9}
\label{sssec:MM9}
$$\mathfig{0.3}{movie_moves/MM9}$$

Remember that the proof of Proposition \ref{prop:duality} in \cite{ClarMW:Fix} assumed invariance under this movie move. Thus, since we don't have access to Corollary \ref{cor:post-duality}, we can't know in advance that the space of chain maps between the first and last frames is one dimensional. As such, we'll have to calculate the map on every object in the initial tangle complex, checking it's the identity on each of these objects.

There are four variations of MM9: after fixing the orientation of one
strand, we have two possible orientations for the other strand, and a choice of sign for the initial crossing. We'll do the calculations for both types of crossings with a given orientation. It's easy to see that changing orientation
essentially interchanges the maps in these two cases.

With a positive initial crossing, we have
{
\newcommand{\pd}[1]{\mathfig{0.09}{movie_moves/MM9/positive/#1}}
\newcommand{\pa}[1]{\mathfig{0.04}{movie_moves/MM9/positive/#1}}
\newcommand{\pdt}[1]{\mathfig{0.09}{movie_moves/MM9/#1}}
\begin{align*}
\xymatrix@R-7.5mm@C+12mm{
    \pd{MM9f1} \ar[r]^{R_2a} & \pd{MM9f2} \ar[r]^{\sigma_{12}} & \pd{MM9f2renumbered} \ar[r]^{{R_2a}^{-1}} & \pd{MM9f3}, \\
}
\intertext{where $\sigma$ is the necessary crossing reordering map. The components of the chain map are given by:}
\xymatrix@R-7.5mm@C+12mm{
    \pd{MM9f1_trace} \ar@{|->}[r]^{1} \ar@{|->}[dr] & %
    \pd{MM9f2_trace1} \ar@{|->}[r]^{1} \ar@{}[d]|{\directSum} & %
    \pd{MM9f2_trace1} \ar@{|->}[r]^{1} \ar@{}[d]|{\directSum} & %
    \pd{MM9f3_trace} \\ %
    & \pd{MM9f2_trace2} \ar@{|->}[r]^{1} & %
      \pd{MM9f2_trace2} \ar@{|->}[ur]^{0} & \\\\%
    \pd{MM9f1_dtrace} \ar@{|->}[r]^{\text{b} \compose \text{z}} \ar@{|->}[dr] & %
    \pd{MM9f2_dtrace1} \ar@{|->}[r]^{-1} \ar@{}[d]|{\directSum} & %
    \pd{MM9f2_dtrace1} \ar@{|->}[r]^{-\text{u} \compose \text{d}} \ar@{}[d]|{\directSum} & %
    \pd{MM9f3_dtrace} \\ %
    & \pd{MM9f2_dtrace2} \ar@{|->}[r]^{1} & %
      \pd{MM9f2_dtrace2} \ar@{|->}[ur]^{0} & %
}
\end{align*}
}
and the composition is just the identity.

With a negative crossing, we have
{
\newcommand{\pd}[1]{\mathfig{0.1}{movie_moves/MM9/negative/#1}}
\newcommand{\pa}[1]{\mathfig{0.05}{movie_moves/MM9/negative/#1}}
\newcommand{\pdt}[1]{\mathfig{0.1}{movie_moves/MM9/#1}}
\begin{align*}
\xymatrix@R-7.5mm@C+12mm{
    \pd{MM9f1} \ar[r]^{R_2b} & \pd{MM9f2} \ar[r]^{\sigma_{23}} & \pd{MM9f2renumbered} \ar[r]^{{R_2b}^{-1}} & \pd{MM9f3}, \\
    }
\intertext{with the components of the chain map given by}
\xymatrix@R-7.5mm@C+12mm{
    \pd{MM9f1_trace} \ar@{|->}[r]^{\pdt{tuck}} \ar@{|->}[dr] & %
    \pd{MM9f2_trace1} \ar@{|->}[r]^{1} \ar@{}[d]|{\directSum} & %
    \pd{MM9f2_trace1} \ar@{|->}[r]^{\pdt{untuck}} \ar@{}[d]|{\directSum} & %
    \pd{MM9f3_trace} \\ %
    & \pd{MM9f2_trace2} \ar@{|->}[r]^{1} & %
      \pd{MM9f2_trace2} \ar@{|->}[ur]^{0} & \\\\%
    \pd{MM9f1_dtrace} \ar@{|->}[r]^{\bar{r}} \ar@{|->}[dr] & %
    \pd{MM9f2_dtrace1} \ar@{|->}[r]^{-1} \ar@{}[d]|{\directSum} & %
    \pd{MM9f2_dtrace1} \ar@{|->}[r]^{-r} \ar@{}[d]|{\directSum} & %
    \pd{MM9f3_dtrace}. \\ %
    & \pd{MM9f2_dtrace2} \ar@{|->}[r]^{1} & %
       \pd{MM9f2_dtrace2} \ar@{|->}[ur]^{0} & %
}
\end{align*}
}
Here, again, our composition is the identity.

\subsubsection*{MM10}
\label{sssec:MM10}
$$\mathfig{0.9}{movie_moves/MM10}$$

This move has the most frames and the most crossings, in addition to forty-eight variations: assuming the highest strand is oriented to the right, we have $3!$ height orderings and $2^3$ orientation possibilities for the other three strands. Various shortcuts have been successfully employed in \cite{BN05_MR2174270} and \cite{ClarMW:Fix} for the $\su{2}$ theory; however, we will build on the technique of the latter to give a completely computation-free proof of invariance under MM10.

Let's first establish that one particular variation induces the identity map. To do this, consider the non-generic projection in Figure \ref{fig:3-cell1}: a cusp over a crossing. Decomposing the space of projections of smooth tangles (with our specific boundary data) into strata of ``genericness", we can view this projection as a 3-cell in the dual complex (where a $k$-cell corresponds to a codimension $k$ stratum). Here, 0-cells correspond to generic immersions, 1-cells correspond to Reidemeister moves, and 2-cells correspond to movie moves. The 3-cell in question, shown in Figure \ref{fig:3-cell-diagram1}, is bounded by 2-cells representing MM10, MM6, and MM8, as well as five 2-cells corresponding to the ``zeroth movie move" (two simultaneous but distant Reidemeister moves). Since we've already shown that MM6 and MM8 give the identity, we get this variation of MM10 for free.

\begin{figure}[h]
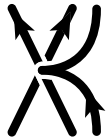

$$\mathfig{0.1}{movie_moves/MM10/3-cell1}$$
\caption[A non-generic projection]{A non-generic projection corresponding to a 3-cell
involving MM10, MM6, and MM8.}\label{fig:3-cell1}
\end{figure}

\begin{figure}[h]
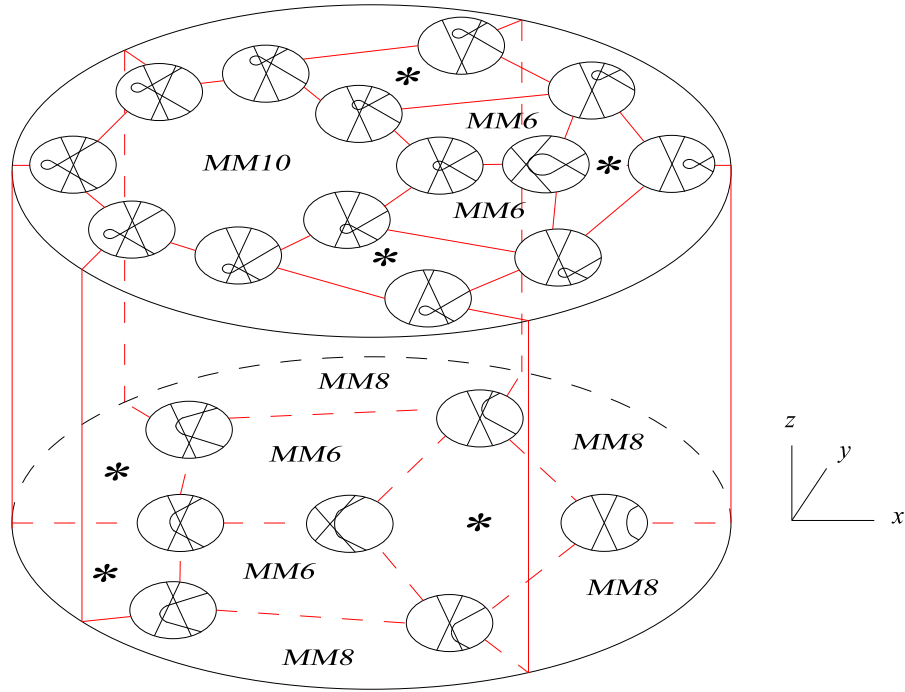

$$\mathfig{0.9}{movie_moves/MM10/3-cell_diagram1}$$
\caption[The 3-cell for the singularity in Figure
\ref{fig:3-cell1}]{The 3-cell for the singularity in Figure
\ref{fig:3-cell1}. The 0-cells here are the
generic tangle projections neighboring this singularity, achieved
by straightening ($z$ direction) and translating ($x$ and $y$ directions) the kink. The 2-cells marked with an asterisk correspond to distant Reidemeister
moves.}\label{fig:3-cell-diagram1}
\end{figure}

To check the remaining variations, we'll just repeat the argument in \cite{ClarMW:Fix}: the projection in Figure \ref{fig:3-cell2} has a dual 3-cell bounded by two MM10 2-cells, four MM6 2-cells, and six
distant Reidemeister move 2-cells. Having proved invariance for MM6, we see that invariance for either of the two MM10 variations present follows from invariance of the other. It's then straightforward to show that, with proper choices of strand orientations, invariance under the MM10 variation discussed above propagates (one variation at a time) to the other forty-seven variations. See \cite{ClarMW:Fix} for details.
\begin{figure}[h]
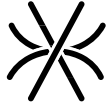

$$\mathfig{0.1}{movie_moves/MM10/3-cell2}$$
\caption[A non-generic projection]{A non-generic projection corresponding to a 3-cell
involving MM10 and MM6.}\label{fig:3-cell2}
\end{figure}
%

\subsection{MM11-15}

The final five movie moves involve Morse moves, and so aren't reversible; we'll have to compute the map for each movie (left and right), and see that they coincide.

\subsubsection*{MM11}
$$\mathfig{0.2}{movie_moves/MM11}$$
This is easy: every complex involved consists of a single object, and the cobordisms on either side are clearly isotopic, in either time direction.

\subsubsection*{MM12}
$$\mathfig{0.2}{movie_moves/MM12}$$
We can't use a homotopy isolation argument here, but a brute-force computation of all components is not difficult. Keep in mind there are two variations: the twist could be either positive or negative. Treating the positive twist, in the forward time direction (i.e., reading down) we'll see on the left

{
\newcommand{\pd}[1]{\mathfig{0.1}{movie_moves/MM12/#1}}
\newcommand{\pds}[1]{\mathfig{0.06}{movie_moves/MM12/#1}}
\newcommand{\birth}{\mathfig{0.03}{foams/birth}}
\newcommand{\death}{\mathfig{0.03}{foams/death}}
\begin{align*}
\xymatrix@R-5mm@C+3mm{
    \emptyset \ar[r]^{\birth} & \pds{MM12f2} \ar[r]^{R_1a} & \pd{MM12f3a} \\
     \emptyset
     \ar@{|->}[r]^{\birth} & %
     \pds{MM12f2} \ar@{|->}[r]^{\text{s}} & %
     \pd{MM12f3a_trace}, \\ %
}
\end{align*}
while on the right we have
\begin{align*}
\xymatrix@R-5mm@C+3mm{
    \emptyset
    \ar[r]^{\birth} & \pds{MM12f2} \ar[r]^{R_1a} & \pd{MM12f3b} \\
     \emptyset \ar@{|->}[r]^{\birth} & %
     \pds{MM12f2} \ar@{|->}[r]^{\text{s}} & %
     \pd{MM12f3b_trace}. \\ %
}
\end{align*}
where $\text{s}$ is a saddle. Either way, the morphism is a bent tube.

In reverse time, reading up, we have on the left
\begin{align*}
\xymatrix@R-5mm{
    \emptyset & \pds{MM12f2} \ar[l]_{\death} & \pd{MM12f3a} \ar[l]_{{R_1a}^{-1}}  \\
     \emptyset & %
     \pds{MM12f2} \ar@{|->}[l]_{\death} & %
     \pd{MM12f3a_trace} \ar@{}[d]|{\directSum} \ar@{|->}[l]_{\reflectmathfig{0.05}{movie_moves/MM12/cylinder_and_disc_annihilation}} \\ %
     & 0 & \pd{MM12f3a_dtrace}, \ar@{|->}[l]_0 \\
}
\end{align*}
while on the right we have
\begin{align*}
\xymatrix@R-5mm{
    \emptyset & \pds{MM12f2} \ar[l]_{\death} & \pd{MM12f3b} \ar[l]_{{R_1a}^{-1}}  \\
     \emptyset & %
     \pds{MM12f2} \ar@{|->}[l]_{\death} & %
     \pd{MM12f3b_trace} \ar@{}[d]|{\directSum} \ar@{|->}[l]_{\mathfig{0.05}{movie_moves/MM12/cylinder_and_disc_annihilation}} \\ %
     & 0 & \pd{MM12f3b_dtrace}. \ar@{|->}[l]_0 \\
}
\end{align*}
}%
These maps are each just a pair of discs.

The mirror image movie move, in which the twist is negative, is similar: the morphisms appearing will be the same, but swapped with respect to the forward and reverse directions.

\subsubsection*{MM13}

$$\mathfig{0.2}{movie_moves/MM13}$$

The saddle in this move restricts the possible orientations we can see, and by symmetry we can assume that both strands are oriented upward. Let's consider the case of a positive twist. In the forward time direction, we see on the left
{
\newcommand{\pd}[1]{\mathfig{0.08}{movie_moves/MM13/#1}}
\newcommand{\pds}[1]{\mathfig{0.06}{movie_moves/MM13/#1}}
\begin{align*}
\xymatrix@R-5mm@C+10mm{
    \pd{MM13f1} \ar@{|->}[r]^{R_1a} & \pd{MM13f2a} \ar@{|->}[r]^{\text{saddle}} & \pd{MM13f3} \\
    \pd{MM13f1} \ar@{|->}[r]^{\text{s}_L} & \pd{MM13f2a_trace} \ar@{|->}[r]^{\text{s}_R} & \pd{MM13f1},
}
\end{align*}
and on the right,
\begin{align*}
\xymatrix@R-5mm@C+10mm{
    \pd{MM13f1} \ar@{|->}[r]^{R_1a} & \pd{MM13f2b} \ar@{|->}[r]^{\text{saddle}} & \pd{MM13f3} \\
    \pd{MM13f1} \ar@{|->}[r]^{\text{s}_R} & \pd{MM13f2b_trace} \ar@{|->}[r]^{\text{s}_L} & \pd{MM13f1},
}
\end{align*}
}
where $\text{s}_L$ is a saddle to left sheet, and $\text{s}_R$ is a saddle to the right sheet. Either way, our composition is a tube between the sheets.

In the reverse time direction, we have
{
\newcommand{\pd}[1]{\mathfig{0.08}{movie_moves/MM13/#1}}
\newcommand{\pds}[1]{\mathfig{0.06}{movie_moves/MM13/#1}}
\begin{align*}
\xymatrix@R-5mm@C+10mm{
    \pd{MM13f1} & \pd{MM13f2a} \ar@{|->}[l]^{R_1a^{-1}} & \pd{MM13f3} \ar@{|->}[l]^{\text{saddle}} \\
    \pd{MM13f1} & \pd{MM13f2a_trace} \ar@{|->}[l]^{\text{cap}} & \pd{MM13f1} \ar@{|->}[l]^{\text{s}_L}
}
\end{align*}
on the left, and
\begin{align*}
\xymatrix@R-5mm@C+10mm{
    \pd{MM13f1}  & \pd{MM13f2b} \ar@{|->}[l]^{R_1a^{-1}} & \pd{MM13f3} \ar@{|->}[l]^{\text{saddle}} \\
    \pd{MM13f1} & \pd{MM13f2b_trace} \ar@{|->}[l]^{\text{cap}} & \pd{MM13f1} \ar@{|->}[l]^{\text{s}_R}
}
\end{align*}
}
on the right, giving us the identity compositions.

Again, the mirror image move (with a negative twist) has the same morphisms, though swapped with respect to time direction.

\subsubsection*{MM14}

$$\mathfig{0.2}{movie_moves/MM14}$$

We have some orientation choices for this move, and the circle may end up above or below the vertical strand. Assume first the circle is oriented anti-clockwise, and lies above an upward-oriented vertical strand. In the forward time direction, on the left we have
{
\newcommand{\pd}[1]{\mathfig{0.06}{movie_moves/MM14/#1}}
\newcommand{\pds}[1]{\mathfig{0.07}{movie_moves/MM14/#1}}
\begin{align*}
\xymatrix@R-5mm@C+10mm{
    \mathfig{0.011}{movie_moves/MM14/MM14f1} \ar@{|->}[r]^{\text{birth}} & \pd{MM14f2a} \ar@{|->}[r]^{R_2b} & \pd{MM14f3} \\
    \mathfig{0.011}{movie_moves/MM14/MM14f1} \ar@{|->}[r]^{\pds{curtain_and_rdisc_creation}} & \pd{MM14f2a} \ar@{|->}[r]^{\pds{R2b_tuck_map}} \ar@{|->}[dr]_{\pds{R2a_tuck_map2}} & \pd{MM14f3cx1} \\
    & & \pd{MM14f3cx2}
}
\end{align*}
while on the right we have
\begin{align*}
\xymatrix@R-5mm@C+10mm{
    \mathfig{0.011}{movie_moves/MM14/MM14f1} \ar@{|->}[r]^{\text{birth}} & \pd{MM14f2b} \ar@{|->}[r]^{R_2b} & \pd{MM14f3} \\
    \mathfig{0.011}{movie_moves/MM14/MM14f1} \ar@{|->}[r]^{\pds{curtain_and_ldisc_creation}} & \pd{MM14f2b} \ar@{|->}[r]^{id} \ar@{|->}[dr]_{\pds{R2a_tuck_map}} & \pd{MM14f3cx1} \\
    & & \pd{MM14f3cx2}
}
\end{align*}
}
The top composition in each case is just
$$\mathfig{0.1}{movie_moves/MM14/curtain_and_ldisc_creation},$$
while the bottom composition is given by
$$\mathfig{0.1}{movie_moves/MM14/lower_composition}.$$

The reverse time direction is similar and easily described in words. First, reverse all arrows and turn all morphisms upside down in the diagrams above. Then add a negative sign to each of the two diagonal arrows, as dictated in our definitions of the $R2$ maps. Clearly, the left and right sides again yield the same compositions.

Changing orientations and strand height do not change the calculations.

\subsubsection*{MM15}

$$\mathfig{0.2}{movie_moves/MM15}$$

By symmetry we can assume that the middle strand is oriented left to right. The other strands must be oriented oppositely for the saddle to occur, leaving two possible orientations, and the middle strand can either pass above or below the other two. Thus there are a total of four variations, and we'll show calculations for one.

In forward time, we have on the left
\begin{align*}
\newcommand{\pd}[1]{\mathfig{0.1}{movie_moves/MM15/#1}}
\xymatrix@R-7.5mm@C+5mm{
    \pd{MM15f1} \ar[r]^{R_2a} & \pd{MM15f2a} \ar[r]^{\text{saddle}} & \pd{MM15f3} \\
    \\
     \pd{MM15f1} \ar@{|->}[r]^{1} \ar@{|->}[dr] & %
     \pd{MM15f2a_trace1} \ar@{|->}[r]^{\text{saddle}} \ar@{}[d]|{\directSum} & %
     \pd{MM15f3_trace1} \ar@{}[d]|{\directSum} \\ %
     & \pd{MM15f2a_trace2} \ar@{|->}[r] & \pd{MM15f3_trace2} %
}
\end{align*}
and on the right
\begin{align*}
\newcommand{\pd}[1]{\mathfig{0.1}{movie_moves/MM15/#1}}
\newcommand{\pa}[1]{\mathfig{0.05}{movie_moves/MM15/#1}}
\xymatrix@R-7.5mm@C+5mm{
    \pd{MM15f1} \ar[r]^{R_2b} & \pd{MM15f2b} \ar[r]^{\text{saddle}} & \pd{MM15f3} \\
     & & \\
     \pd{MM15f1} \ar@{|->}[r]^{\pa{MM15_tuck_map}} \ar@{|->}[dr] & %
     \pd{MM15f2b_trace2} \ar@{|->}[r]^{\pa{elephant}} \ar@{}[d]|{\directSum} & %
     \pd{MM15f3_trace1} \ar@{}[d]|{\directSum} \\ %
     & \pd{MM15f2b_trace1} \ar@{|->}[r] & \pd{MM15f3_trace2}. %
}
\end{align*}

We need only concern ourselves with the component of the maps going to
$\mathfig{0.06}{movie_moves/MM15/MM15f3_trace1}$, and have left the other components unlabeled. Clearly, the relevant composition on each side is just a saddle between the lower two strands, and we have equality.

In the reverse time direction, we have on the left
\begin{align*}
\newcommand{\pd}[1]{\mathfig{0.1}{movie_moves/MM15/#1}}
\xymatrix@R-7.5mm@C+5mm{
    \pd{MM15f1}& \pd{MM15f2a} \ar[l]_{{R_2a}^{-1}} & \pd{MM15f3} \ar[l]_{\text{saddle}} \\
    \\
     \pd{MM15f1} & %
     \pd{MM15f2a_trace1} \ar@{|->}[l]_{1} & %
     \pd{MM15f3_trace1} \ar@{|->}[l]_{\text{saddle}} \\ %
}
\end{align*}
and on the right
\begin{align*}
\newcommand{\pd}[1]{\mathfig{0.1}{movie_moves/MM15/#1}}
\newcommand{\pa}[1]{\mathfig{0.05}{movie_moves/MM15/#1}}
\xymatrix@R-7.5mm@C+5mm{
    \pd{MM15f1}& \pd{MM15f2b} \ar[l]_{{R_2b}^{-1}} & \pd{MM15f3} \ar[l]_{\text{saddle}} \\
    \\
     \pd{MM15f1} & %
     \pd{MM15f2b_trace2} \ar@{|->}[l]_{\pa{MM15_untuck_map}} & %
     \pd{MM15f3_trace1} \ar@{|->}[l]_{\pa{elephant_reverse}}. \\ %
}
\end{align*}
Again we see a saddle between the lower strands on each side.

The other three variations are almost exactly the same, and we leave them as an exercise.

\subsection{The End}
\label{ssec:theend}
Having now shown that movie moves do not change induced maps, the proof of Theorem \ref{thm:1} is complete. 

\appendix
\section{Simplifying complexes}
\label{sec:simplifying}

\subsection{Gaussian elimination for complexes}
\label{ssec:gaussian}

The following two lemmas provide a nice tool for simplifying chain complexes without changing their homotopy type. Note that, throughout complexes in Appendix \ref{sec:simplifying}, we'll write $\bullet$ for any maps that we don't need to know explicitly.

The first lemma comes from Bar-Natan \cite{BN07_MR2320156}.

\begin{lemma}[Single Gaussian elimination]%
\label{lem:single-gaussian}%
Consider the complex
\begin{equation}
\label{eq:complex}
 \xymatrix@C+55pt@R+20pt{
    A                     \ar[r]^{\psmallmatrix{\bullet \\ \alpha}}              &
    \directSumStack{B}{C} \ar[r]^{\psmallmatrix{\varphi & \lambda \\ \mu & \nu}} &
    \directSumStack{D}{E} \ar[r]^{\psmallmatrix{\bullet & \epsilon}}             &
    F
 }
\end{equation}
in any additive category, where $\varphi: B \IsoTo D$ is an
isomorphism, and all other morphisms are arbitrary (subject to
$d^2=0$, of course). Then there is a homotopy equivalence with a
much simpler complex, ``stripping off" $\varphi$.

\begin{equation*}
 \xymatrix@C+55pt@R+20pt{
    A                     \ar[r]^{\psmallmatrix{\bullet \\ \alpha}}              \ar@{<->}[d]^{\psmallmatrix{1}} &
    \directSumStack{B}{C} \ar[r]^{\psmallmatrix{\varphi & \lambda \\ \mu & \nu}} \ar@<-0.5ex>[d]_{\psmallmatrix{0 & 1}}  &
    \directSumStack{D}{E} \ar[r]^{\psmallmatrix{\bullet & \epsilon}}             \ar@<-0.5ex>[d]_{\psmallmatrix{-\mu \varphi^{-1} & 1}} &
    F                                                                            \ar@{<->}[d]^{\psmallmatrix{1}} \\
    A \ar[r]^{\psmallmatrix{\alpha}}                          &
    C \ar[r]^{\psmallmatrix{\nu - \mu \varphi^{-1} \lambda}} \ar@<-0.5ex>[u]_{\psmallmatrix{-\varphi^{-1} \lambda \\ 1}} &
    E \ar[r]^{\psmallmatrix{\epsilon}}                       \ar@<-0.5ex>[u]_{\psmallmatrix{0 \\ 1}} &
    F
 }
\end{equation*}
\end{lemma}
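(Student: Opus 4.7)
The plan is to prove this by direct verification, since the content is essentially algebraic: every claim reduces to a matrix computation using the hypothesis that $\varphi$ is invertible together with the relations imposed by $d^2=0$ in the original complex.

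First, I would record the consequences of $d^2=0$ that will be used repeatedly. Writing $\binom{\beta}{\alpha}$ for the map $A\to B\oplus C$ and $(\delta\;\epsilon)$ for $D\oplus E\to F$, the condition that consecutive differentials in \eqref{eq:complex} compose to zero gives four relations:
\begin{align*}
\varphi\beta+\lambda\alpha &= 0, & \mu\beta+\nu\alpha &= 0,\\
\delta\varphi+\epsilon\mu &= 0, & \delta\lambda+\epsilon\nu &= 0.
\end{align*}
Using the first two, $\beta=-\varphi^{-1}\lambda\alpha$ and $\nu\alpha=\mu\varphi^{-1}\lambda\alpha$, so $(\nu-\mu\varphi^{-1}\lambda)\alpha=0$; symmetrically, $\delta=-\epsilon\mu\varphi^{-1}$ and $\epsilon(\nu-\mu\varphi^{-1}\lambda)=0$. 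This shows the simplified complex is indeed a complex.

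Next I would check that the two vertical collections of maps in the diagram are chain maps. For the downward map, the only nontrivial squares to verify concern the middle morphisms; for instance, going from $A\to B\oplus C\to C$ one computes $(0\;1)\binom{\beta}{\alpha}=\alpha$, matching the new differential on $A$. The square involving $B\oplus C\to D\oplus E\to E$ reads $(-\mu\varphi^{-1}\;\;1)\begin{pmatrix}\varphi&\lambda\\ \mu&\nu\end{pmatrix}=(0\;\;\nu-\mu\varphi^{-1}\lambda)$, which matches $(\nu-\mu\varphi^{-1}\lambda)\circ(0\;1)$. The other squares, and the analogous checks for the upward map, are identical in spirit.

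Then I would compute the two round-trip compositions. Up-then-down is the identity on the nose: $(0\;1)\binom{-\varphi^{-1}\lambda}{1}=1$ on $C$ and $(-\mu\varphi^{-1}\;\;1)\binom{0}{1}=1$ on $E$. Down-then-up, on the other hand, gives $\binom{-\varphi^{-1}\lambda}{1}(0\;1)=\begin{pmatrix}0&-\varphi^{-1}\lambda\\0&1\end{pmatrix}$ on $B\oplus C$ and $\binom{0}{1}(-\mu\varphi^{-1}\;\;1)=\begin{pmatrix}0&0\\-\mu\varphi^{-1}&1\end{pmatrix}$ on $D\oplus E$, so the difference from the identity is $\begin{pmatrix}1&\varphi^{-1}\lambda\\0&0\end{pmatrix}$ and $\begin{pmatrix}1&0\\\mu\varphi^{-1}&0\end{pmatrix}$ respectively.

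Finally, I would exhibit the homotopy $h$ with the single nonzero component $h\colon D\oplus E\to B\oplus C$ given by $\begin{pmatrix}\varphi^{-1}&0\\0&0\end{pmatrix}$, and verify directly that $dh+hd$ realizes the above difference. Indeed, on $B\oplus C$ one gets $h\circ d=\begin{pmatrix}\varphi^{-1}&0\\0&0\end{pmatrix}\begin{pmatrix}\varphi&\lambda\\ \mu&\nu\end{pmatrix}=\begin{pmatrix}1&\varphi^{-1}\lambda\\0&0\end{pmatrix}$, and on $D\oplus E$ one gets $d\circ h=\begin{pmatrix}\varphi&\lambda\\ \mu&\nu\end{pmatrix}\begin{pmatrix}\varphi^{-1}&0\\0&0\end{pmatrix}=\begin{pmatrix}1&0\\\mu\varphi^{-1}&0\end{pmatrix}$, matching exactly.

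No step here is a genuine obstacle: the only thing to be careful about is keeping track of the matrix orientations (row vs.\ column) and making sure the unnamed maps $\beta,\delta$ are handled via the $d^2=0$ relations rather than named explicitly. The entire proof amounts to one relation-gathering step followed by a handful of $2\times 2$ matrix multiplications.
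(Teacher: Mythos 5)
Your verification is correct, and it is the standard (essentially the only) argument: the paper itself gives no proof of this lemma, simply citing Bar-Natan, and the proof there is exactly this kind of direct matrix check with the same homotopy $h=\left(\begin{smallmatrix}\varphi^{-1}&0\\0&0\end{smallmatrix}\right)$ on $D\oplus E\to B\oplus C$. Two small points worth adding: the homotopy identity $1-ir=dh+hd$ also needs to be observed (trivially) at $A$ and $F$, where both sides vanish; and since the paper's subsequent remark claims Gaussian elimination is a \emph{strong deformation retract}, you may as well record the one-line checks $rh=(0\;1)\left(\begin{smallmatrix}\varphi^{-1}&0\\0&0\end{smallmatrix}\right)=0$ and $hi=\left(\begin{smallmatrix}\varphi^{-1}&0\\0&0\end{smallmatrix}\right)\left(\begin{smallmatrix}0\\1\end{smallmatrix}\right)=0$, which your data already satisfy.
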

\begin{remark}
It's an easy check that Gaussian elimination is a strong deformation retract (Definition \ref{def:simple}).
\end{remark}

By applying Gaussian elimination twice on two adjacent isomorphisms (that aren't composable), we get the following corollary \cite{MorrN:sl(3),ClarMW:Fix}.

\begin{lemma}[Double Gaussian elimination]%
\label{lem:double-gaussian}%
When $\psi$ and $\varphi$ are isomorphisms, there's a homotopy
equivalence of complexes:
\begin{equation*}
 \xymatrix@C+45pt@R+20pt{
    A                                 \ar[r]^{\psmallmatrix{\bullet \\ \alpha}}                                    \ar@{<->}[d]^{\psmallmatrix{1}} &
    \directSumStack{B}{C}             \ar[r]^{\psmallmatrix{\psi & \beta \\ \bullet & \bullet \\ \gamma & \delta}} \ar@<-0.5ex>[d]_{\psmallmatrix{0 & 1}}  &
    \directSumStackThree{D_1}{D_2}{E} \ar[r]^{\psmallmatrix{\bullet & \varphi & \lambda \\ \bullet & \mu & \nu}}   \ar@<-0.5ex>[d]_{\psmallmatrix{-\gamma \psi^{-1} & 0 & 1}} &
    \directSumStack{F}{G}             \ar[r]^{\psmallmatrix{\bullet & \eta}}                                       \ar@<-0.5ex>[d]_<(0.2){\psmallmatrix{-\mu \varphi^{-1} & 1}} &
    H                                                                                                              \ar@{<->}[d]^{\psmallmatrix{1}} \\
    A \ar[r]^{\psmallmatrix{\alpha}}                          &
    C \ar[r]^{\psmallmatrix{\delta - \gamma \psi^{-1} \beta}} \ar@<-0.5ex>[u]_<(0.25){\psmallmatrix{-\psi^{-1} \beta \\ 1}} &
    E \ar[r]^{\psmallmatrix{\nu - \mu \varphi^{-1} \lambda}}  \ar@<-0.5ex>[u]_<(0.3){\psmallmatrix{0 \\ - \varphi^{-1} \lambda \\ 1}} &
    G \ar[r]^{\psmallmatrix{\eta}}                            \ar@<-0.5ex>[u]_<(0.35){\psmallmatrix{0 \\ 1}} &
    H
 }
\end{equation*}
\end{lemma}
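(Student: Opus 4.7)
The plan is to apply Lemma \ref{lem:single-gaussian} (single Gaussian elimination) twice, once for $\psi$ and once for $\varphi$, and then compose the resulting homotopy equivalences. Since $\psi : B \to D_1$ sits in the differential at one homological height and $\varphi : D_2 \to F$ sits in the differential at the next height, they touch disjoint direct summands, so the two eliminations can be performed independently.

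First, I would apply Lemma \ref{lem:single-gaussian} to strip off $\psi$. This replaces the original five-term complex by the four-term complex
\[
A \to C \to D_2 \directSum E \to F \directSum G \to H,
\]
where the new differential out of $C$ carries the correction term $\delta - \gamma \psi^{-1} \beta$ predicted by the single-elimination formula, while the differential $D_2 \directSum E \to F \directSum G$ is simply the submatrix $\psmallmatrix{\varphi & \lambda \\ \mu & \nu}$ obtained by deleting the column that used to hit $D_1$. Crucially, $\varphi$ is untouched by this first step, since elimination of $\psi$ only modifies rows and columns indexed by $B$ and $D_1$. The chain maps realizing this equivalence are read directly off Lemma \ref{lem:single-gaussian}, extended by the identity on the untouched summands $A$, $D_2$, $E$, $F$, $G$, $H$.

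Second, I would apply Lemma \ref{lem:single-gaussian} again, now to the still-isomorphism $\varphi : D_2 \to F$ in the reduced complex. This produces the three-term chunk $C \to E \to G$ with middle differential $\nu - \mu \varphi^{-1} \lambda$, and the explicit chain maps again come from the single-elimination formula, extended by the identity on $A$, $C$, $G$, $H$.

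Finally, I would compose the two homotopy equivalences and verify that the composite matches the matrices in the statement. The main (and only) task is bookkeeping: check that the block matrices $\psmallmatrix{0 & 1}$, $\psmallmatrix{-\gamma \psi^{-1} & 0 & 1}$, $\psmallmatrix{-\mu \varphi^{-1} & 1}$ and their partners $\psmallmatrix{-\psi^{-1}\beta \\ 1}$, $\psmallmatrix{0 \\ -\varphi^{-1}\lambda \\ 1}$, $\psmallmatrix{0 \\ 1}$ are exactly what one obtains by composing the two single eliminations on disjoint summands. I do not expect a genuine obstacle: the two eliminations commute because $\psi$ and $\varphi$ act on disjoint blocks, and the chain-map property for the composite follows from the chain-map properties of each single elimination (which in turn is already part of Lemma \ref{lem:single-gaussian}). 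The only mild care needed is to confirm that the correction terms introduced by eliminating $\psi$ do not spoil the $\varphi$-block, which is immediate from inspecting the positions of the zeros in the matrices.
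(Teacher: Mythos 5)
Your proposal is correct and is exactly the paper's route: the paper obtains Lemma \ref{lem:double-gaussian} by applying Lemma \ref{lem:single-gaussian} twice to the two adjacent, non-composable isomorphisms $\psi$ and $\varphi$ and composing the resulting equivalences. Your bookkeeping of the composed block matrices matches the statement, so nothing further is needed.
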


\subsection{Three isomorphisms in $\Kob{3}$}
\label{ssec:isomorphisms}

To put Gaussian elimination to work for us, we'll use the following isomorphisms (Theorem 3.11 in \cite{MorrN:sl(3)}).\footnote{These isomorphisms categorify the relations in Kuperberg's spider (Equation \ref{eq:spider}), and are readily proven using the local relations in $\Cob{3}$ (Section \ref{ssec:categorifying})}
\begin{enumerate}
\item $\mathfig{0.045}{webs/clockwise_circle} \iso q^{-2} \, \emptyset \directSum q^0 \, \emptyset \directSum q^2 \, \emptyset$, a.k.a. ``delooping," is an isomorphism via the maps
\begin{align*}
\xymatrix@C+=20mm@R+=5mm{
    & q^{-2} \, \emptyset \ar@{.}[d]|{\textrm{\scalebox{1.5}{$\directSum$}}} \ar[dr]^{\frac{1}{3}\rotatemathfig{0.025}{90}{cobordisms/handle_bdy_right}} & \\
    \mathfig{0.05}{webs/clockwise_circle}
                    \ar[ur]_{\mathfig{0.025}{foams/death}}
                    \ar[r]_(0.65){\frac{1}{3} \rotatemathfig{0.025}{90}{cobordisms/handle_disc_bdy_left}}
                    \ar[dr]_{\frac{1}{3} \rotatemathfig{0.025}{90}{cobordisms/handle_bdy_left}} &
    q^0 \, \emptyset \ar@{.}[d]|{\textrm{\scalebox{1.5}{$\directSum$}}} \ar[r]^(0.3){-\frac{1}{3} \rotatemathfig{0.025}{90}{cobordisms/handle_disc_bdy_right}} &
    \mathfig{0.05}{webs/clockwise_circle} \\
 & q^2  \, \emptyset \ar[ur]^{\mathfig{0.025}{foams/birth}} & }
\end{align*}

\item $\mathfig{0.045}{webs/bubble} \iso q^{-1} \;\mathfig{0.009}{webs/tall_strand} \directSum q \;\mathfig{0.009}{webs/tall_strand}$, a.k.a. ``debubbling," is an isomorphism via the maps
\begin{align*}
\xymatrix@C+15mm@R-15mm{%
    &
    q^{-1} \;\mathfig{0.009}{webs/tall_strand}
        \ar@{}[dd]_{\textrm{\scalebox{2}{$\directSum$}}}
        \ar[dr]^{\frac{1}{2}\mathfig{0.1}{cobordisms/tube_relation/upper_bubble_kiss}} &
    \\
     \mathfig{0.045}{webs/bubble}
        \ar[ur]^{\mathfig{0.1}{cobordisms/tube_relation/lower_bubble}}
        \ar[dr]_{\frac{1}{2}\mathfig{0.1}{cobordisms/tube_relation/lower_bubble_kiss}} &
    &
     \mathfig{0.045}{webs/bubble} \\
    &
    q^{\phantom{-1}} \;\mathfig{0.009}{webs/tall_strand}
        \ar[ur]_{\mathfig{0.1}{cobordisms/tube_relation/upper_bubble}} &
    \\
}
\end{align*}

\item $\mathfig{0.08}{webs/oriented_square}
    \iso \mathfig{0.08}{webs/two_strands_horizontal} \directSum \mathfig{0.08}{webs/two_strands_vertical}$, a.k.a. ``desquaring," is an isomorphism via
\begin{align*}
\xymatrix@C+15mm@R-10mm{%
    &
    \mathfig{0.08}{webs/two_strands_horizontal}
        \ar[dr]^{-\mathfig{0.06}{cobordisms/rocket_relation/rocket_y_upper}} &
    \\
     \mathfig{0.08}{webs/oriented_square}
        \ar[ur]^{\mathfig{0.06}{cobordisms/rocket_relation/rocket_y_lower}}
        \ar[dr]_{\mathfig{0.06}{cobordisms/rocket_relation/rocket_x_lower}} &
    \textrm{\scalebox{1.6}{{$\directSum$}}} &
     \mathfig{0.08}{webs/oriented_square} \\
    &
    \mathfig{0.08}{webs/two_strands_vertical}
        \ar[ur]_{-\mathfig{0.06}{cobordisms/rocket_relation/rocket_x_upper}} &
    \\
}
\end{align*}
\end{enumerate}

\subsection{Proof of the strand-past-vertex moves}
\label{ssec:SPV-moves}

\begin{proof}[Proof of Lemma \ref{lem:spv}]
This requires only a slight modification of the argument given for Lemma \ref{lem:IBL} in \cite{MorrN:sl(3)}.

Consider the IBR variation. In step 1, we desquare and deloop the two obvious objects, giving an isomorphic complex:

{
\newcommand{\pd}[1]{\mathfig{0.08}{strand_past_vertex/IBR/#1}}
\newcommand{\pa}[1]{\mathfig{0.1}{strand_past_vertex/IBR/#1}}
\begin{align*}
\Kh{\pd{IBR_initial}} &=
  \xymatrix@C+=20mm{
  q^{-6} \pd{initial1}
    \ar[r]^{\psmallmatrix{\phantom{-}\text{u} \\ -\text{u}}} &
  \directSumStack{q^{-5} \pd{initial2}}{q^{-5} \pd{initial3}}
    \ar[r]^{\psmallmatrix{\text{u} & \text{u}}} &
  q^{-4} \pd{initial4}
  }\\
&\iso
  \xymatrix@C+=20mm{
  \directSumStack{q^{-6} \pd{initial2_prime}}{q^{-6} \pd{initial1_prime}}
    \ar[r]^{\psmallmatrix{\bullet & \bullet \\ \Id & 0 \\ \text{z} & \text{z}}} &
  \directSumStackThree{q^{-4} \pd{initial2_prime}}{q^{-6} \pd{initial2_prime}}{q^{-5} \pd{initial3}}
    \ar[r]^{\psmallmatrix{\Id & \bullet & \text{u}}} &
  q^{-4} \pd{initial4}
  },
\intertext{where the differentials were calculated using the blister and airlock relations. We now see two adjacent isomorphisms (identities, in fact), and proceed to step 2: apply Lemma \ref{lem:double-gaussian}.}
\phantom{\Kh{\pd{IBR_initial}}} &\htpy \xymatrix@C+=20mm{
    q^{-6} \pd{final1} \ar[r]^{\text{z}} &
    q^{-5} \pd{final2} &
  }
\end{align*}
}
Here the homotopy equivalence component at height $-2$ is given by $\psmallmatrix{0 \\ \Id}$ with inverse $\psmallmatrix{0 & \Id}$; at height $-1$, the component is $\psmallmatrix{\phantom{-}0 & -\text{z} & \phantom{-}\Id}$ with inverse $\psmallmatrix{-\text{u} \\ \phantom{-}0 \\ \phantom{-}\Id}$. Composing these components with the desquaring and delooping maps from step 1 (and Appendix \ref{ssec:isomorphisms}), we obtain the claimed $\s{IBR}$ and $\T{IBR}$.

Let's consider, instead, the OBR variation. Again, we desquare and deloop:

{
\newcommand{\pd}[1]{\mathfig{0.08}{strand_past_vertex/OBR/#1}}
\newcommand{\pa}[1]{\mathfig{0.1}{strand_past_vertex/OBR/#1}}
\begin{align*}
\Kh{\pd{OBR_initial}} &=
  \xymatrix@C+=20mm{
  q^{-6} \pd{initial1}
    \ar[r]^{\psmallmatrix{-\text{u} \\ \phantom{-}\text{u}}} &
  \directSumStack{q^{-5} \pd{initial2}}{q^{-5} \pd{initial3}}
    \ar[r]^{\psmallmatrix{\text{u} & \text{u}}} &
  q^{-4} \pd{initial4}
  }\\
&\iso
  \xymatrix@C+=20mm{
  \directSumStack{q^{-6} \pd{initial2_prime}}{q^{-6} \pd{initial1_prime}}
    \ar[r]^{\psmallmatrix{\phantom{-}\bullet & \phantom{-}\bullet \\ -\Id & \phantom{-}0 \\ -\text{z} & -\text{z}}} &
  \directSumStackThree{q^{-4} \pd{initial2_prime}}{q^{-6} \pd{initial2_prime}}{q^{-5} \pd{initial3}}
    \ar[r]^{\psmallmatrix{\Id & \bullet & \text{u}}} &
  q^{-4} \pd{initial4}
  },
\intertext{Applying Lemma \ref{lem:double-gaussian}, we get:}
\phantom{\Kh{\pd{OBR_initial}}} &\htpy \xymatrix@C+=20mm{
    q^{-6} \pd{final1} \ar[r]^{-\text{z}} &
    q^{-5} \pd{final2} &
  }
\intertext{with identical homotopy equivalence components to the ones we saw in IBR. Of course, this complex is isomorphic to}
\phantom{\Kh{\pd{OBR_initial}}} &\iso \xymatrix@C+=20mm{
    q^{-6} \pd{final1} \ar[r]^{\text{z}} &
    q^{-5} \pd{final2} &
  }
\end{align*}
via the identity at height $-2$ and minus the identity at height $-1$. Composing these three steps gives $\s{OBR}$ and $\T{OBR}$.
}

The other four variations are proven analogously. (Note that the IAR and OAR complexes will be horizontally reflected.)

\end{proof}


\section{Homological hoodoo}
\label{sec:homological}

\subsection{Planar compositions of complexes}
\label{ssec:planar-comps}

Let's consider the action of the planar arc diagram $\mathfig{0.14}{webs/operad1}$ on the following two complexes
{
\newcommand{\pd}[1]{\mathfig{0.1}{webs/twoholes/#1}}
\newcommand{\pa}[1]{\mathfig{0.14}{webs/twoholes/#1}}
\begin{align*}
\pd{A} \ = \ \left( \pd{A1} \xrightarrow{ \ \ d_A^1 \ \ } \pd{A2} \xrightarrow{ \ \ d_A^2 \ \ } \pd{A3} \right) \\ \\
\pd{B} \ = \ \left( \pd{B1} \xrightarrow{ \ \ d_B^1 \ \ } \pd{B2} \xrightarrow{ \ \ d_B^2 \ \ } \pd{B3} \right).
\end{align*}

This will give us the new complex
$$\mathfig{0.14}{webs/twoholes/AB} = \mathfig{0.14}{webs/operad1} \left( \pd{A} , \pd{B} \right)$$
that we construct by taking the double complex and direct summing along the line $y=-x$. (See Figure \ref{fig:twoholes}.)
\begin{figure}[h]
\begin{equation*}
\xymatrix@R20pt@C20pt{
 \pa{A1B3} \ar[r]^{-} \ar@{.}[dr]|{\directSum} &  \pa{A2B3} \ar[r]^{-} \ar@{.}[dr]|{\directSum} & \pa{A3B3} \\
 \pa{A1B2} \ar[r] \ar[u] \ar@{.}[dr]|{\directSum} &  \pa{A2B2} \ar[r] \ar[u] \ar@{.}[dr]|{\directSum} & \pa{A3B2} \ar[u] \\
 \pa{A1B1} \ar[r]^{-} \ar[u] & \pa{A2B1} \ar[r]^{-} \ar[u] & \pa{A3B1} \ar[u]
}
\end{equation*}
\caption[The double complex being collapsed]{The double complex being collapsed.}
\label{fig:twoholes}
\end{figure}
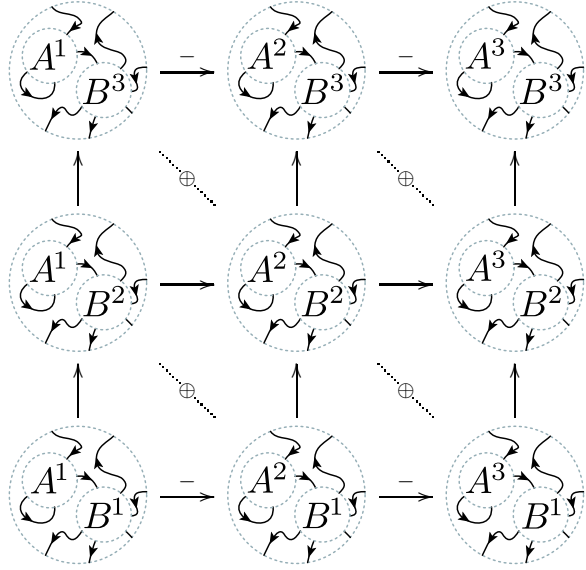
In this picture, the horizontal arrow originating at the $(i,j)$th entry is the planar composition of $(-1)^j d_A^i$ (in hole 1) and $\Id_{B^j}$ (in hole 2). Similarly the vertical arrow at $(i,j)$ is the planar composition of $\Id_{A^i}$ and $d_B^j$.  (Please refer to the sign conventions in Appendix \ref{ssec:sign-conventions}.)

Given the associativity of planar composition, this rule easily generalizes (pairwise, if you like) to planar arc diagrams with $n$ holes.
}

\subsection{Sign conventions}
\label{ssec:sign-conventions}

We'll be using the following conventions for tensor products of complexes \cite{Gelf96_MR1438306}; these rules will translate directly to (ordered) planar compositions.

The tensor product of two complexes $(A^\bullet, d_A)$ and
$(B^\bullet, d_B)$ is defined to be
$$(A \tensor B)^\bullet = \DirectSum_{i+j=\bullet} A^i \tensor B^j,$$
and $$d_{(A \tensor B)^\bullet} = \sum_{i+j=\bullet} (-1)^j d_A^i
\tensor \Id_{B^j} + \Id_{A^i} \tensor d_B^j.$$

If $A^\bullet$ lies horizontally and $B^\bullet$ stands vertically in the double complex, this rule just says ``negate the differentials in every odd row."

As a consequence of these signs in the tensor product construction, the isomorphism $A^\bullet \tensor B^\bullet \iso B^\bullet \tensor A^\bullet$ is not quite the na\"{\i}ve permutation, which is not a chain map. Instead, to we'll need to define the map this way:
\begin{align*}
A^i \tensor B^j & \longrightarrow B^j \tensor A^i \\
(a,b) & \longmapsto (-1)^{ij} (b,a).
\end{align*}
Thus, performing a transposition in a tensor product will negate everything in ``doubly odd" degree. In the $\Kob{3}$ picture, this means that each time we alter the ordering of crossings by a transposition, we are really applying the isomorphism above. The ``doubly odd" objects here are the webs in which both crossings are I-resolved, and these ``doubly I-resolved" webs will pick up the additional minus signs. All other objects are mapped via the identity. 

\newpage
\newcommand{\urlprefix}{}
\bibliographystyle{gtart}
\bibliography{bibliography/bibliography}{}

\end{document}